\definecolor{halfgray}{gray}{0.55} 
\definecolor{webgreen}{rgb}{0,0.5,0}
\definecolor{webbrown}{rgb}{.6,0,0} \hypersetup{%
\newtheorem{theorem}{Theorem}[section]
\newtheorem{add}[theorem]{Addendum}
\newtheorem{lemma}[theorem]{Lemma}
\newtheorem{corollary}[theorem]{Corollary}
\newtheorem{proposition}[theorem]{Proposition}
\theoremstyle{definition}
\newtheorem{problem}[theorem]{Problem}
\newtheorem{remark}[theorem]{Remark}
\newtheorem*{remarks}{Remarks}
\newcommand{\field}[1]{\mathbb{#1}}
\newcommand{\R}{\field{R}}
\newcommand{\C}{\field{C}}
\newcommand{\Z}{\field{Z}}
\newcommand{\Q}{\field{Q}}
\newcommand{\T}{\field{T}}
\newcommand{\cD}{\mathcal{D}}
\newcommand{\cF}{\mathcal{F}}
\newcommand{\cH}{\mathcal{H}}
\newcommand{\cU}{\mathcal{U}}
\newcommand{\cV}{\mathcal{V}}
\newcommand{\cT}{\EuScript T}
\newcommand{\euP}{\EuScript P}
\newcommand{\ie}{{\it i.e., } }
\newcommand{\eg}{{\it e.g., } }
\renewcommand{\phi}{\varphi}
\newcommand{\eps}{\varepsilon}
\renewcommand{\|}{\,\Vert\,}
\begin{document}
\baselineskip=14pt

\title[Rigidity for very non-algebraic Anosov diffeomorphisms]{Smooth rigidity for very non-algebraic Anosov diffeomorphisms of codimension one}
\author {Andrey Gogolev and Federico Rodriguez Hertz}\thanks{The authors were partially supported by NSF grants DMS-1955564 and DMS-1900778, respectively}

 \address{Department of Mathematics, The Ohio State University,  Columbus, OH 43210, USA}
\email{gogolyev.1@osu.edu}

\address{Department of Mathematics, The Pennsylvania State University,  University Park, PA 16802, USA}
\email{hertz@math.psu.edu}

\begin{abstract} 
  \begin{sloppypar}
In this paper we introduce a new methodology for smooth rigidity of Anosov diffeomorphisms based on ``matching functions.'' The main observation is that under certain bunching assumptions on the diffeomorphism the periodic cycle functionals can provide such matching functions. For example we consider a sufficiently small $C^1$ neighborhood of a linear hyperbolic automorphism of the 3-dimensional torus which has a pair of complex conjugate eigenvalues. Then we show that two very non-algebraic (an open and dense condition) Anosov diffeomorphisms from this neighborhood are smoothly conjugate if and only they have matching Jacobian periodic data. We also obtain a similar result for certain higher dimensional codimension one Anosov diffeomorphisms. 
  \end{sloppypar}
\end{abstract}
\maketitle

\section{Introduction}

Recall that a diffeomorphism $f\colon M\to M$ is called {\it Anosov} if the tangent bundle admits a $Df$-invariant splitting $TM=E^s\oplus E^u$, where $E^s$ is uniformly contracting and $E^u$ is uniformly expanding under $f$. Basic examples of Anosov diffeomorphisms are {\it toral hyperbolic automorphisms} $L\colon \T^d\to\T^d$ which are given by {\it hyperbolic matrices} in $GL(d,\Z)$, \ie matrices whose spectrum is disjoint with the unit circle in $\C$. 

Let $f_1, f_2\colon M\to M$ be transitive Anosov diffeomorphisms which are conjugate via a homeomorphism $h$, $h\circ f_1=f_2\circ h$. We will say that $f_1$ and $f_2$ have matching {\it periodic data} if for every periodic point $p=f_1^k(p)$ the differentials $(Df_1^k)_p$ and $(Df_2^k)_{h(p)}$ are conjugate (in particular, they have the same spectrum). By differentiating the conjugacy relation one immediately sees that matching of periodic data is a necessary assumption for the conjugacy to be $C^1$. A weaker assumption which we will consider here is matching of Jacobian periodic data. Namely, we say that $f_1$ and $f_2$ have matching {\it Jacobian periodic data} if every periodic point $p=f_1^k(p)$
$$
(J^sf^k_1)_p=(J^sf^k_2)_{h(p)}\,\,\,\,\,\mbox{ and}\,\,\, (J^uf_1^k)_p=(J^uf_2^k)_{h(p)} 
$$
where $J^sf_i$ and $J^uf_i$ stand for Jacobians of the restrictions of $Df_i$, $i=1,2$, to the stable and unstable distributions, respectively. If the equality holds only for stable (or only for unstable) Jacobians then we will talk about matching of {\it stable (respectively, unstable) Jacobian periodic data}.

In dimension 2 matching of periodic data implies smoothness of the conjugacy by works of de la Llave, Marco and Moriy\'on~\cite{dlL0, MM, dlL}. In higher dimensions a lot of work was devoted to periodic data rigidity (characterization of smooth conjugacy class) of hyperbolic automorphisms, see \eg~\cite{dlL2, KS2, GKS, DW}. In particular, in dimension 3 the problem was solved for automorphisms with a pair of complex eigenvalues by Kalinin and Sadovskaya~\cite{KS2} and for automorphisms with real spectrum by Gogolev and Guysinsky~\cite{GG, G2}. Further, in proximity of automorphism with real spectrum matching of periodic data implies $C^{1+\mathrm{h\"older}}$ regularity of the conjugacy on an open set of Anosov diffeomorphisms in dimension 3 and higher~\cite{GG, G}.

In this paper we transfer some of ideas of~\cite{GRH} from the setting of expanding maps to the setting of Anosov diffeomorphisms. In particular, we have open sets of Anosov diffeomorphisms where we obtain optimal smoothness of the conjugacy using less data, such as Jacobian periodic data, or stable Jacobian periodic data as opposed to full periodic data which was commonly used before. To the best of our knowledge the only prior result when $C^\infty$ smooth conjugacy classes were characterized on an open set of diffeomorphisms in dimension $>2$ is the work of Palis and Yoccoz~\cite{PY} which gave a complete set of invariant of smooth conjugacy for Morse-Smale diffeomorphisms.\footnote{It is a classical KAM theorem of Arnold and Moser that the local smooth conjugacy class of a Diophantine translation on $\T^d$ is characterized by the rotation vector. Further, the Arnold-Moser normal form~\cite[Th\'eor\`eme 2.2]{Her} implies that this smooth conjugacy class is codimension $d$ submanifold in $\textup{Diff}_0(\T^d)$. However it is hopeless to characterize smooth conjugacy classes in an open neighborhood of the Diophantine translation since even topological conjugacy classes are not understood.}

Our proof is based on matching functions technique. Namely, we look at pairs of $C^r$ functions $(\Phi_1,\Phi_2)$ which are defined on the local unstable leaves $W^u_{f_1,loc}(x)$ and $W^u_{f_2,loc}(h(x))$ and which are {\it matching} in the sense that they satisfy the matching relation
$$
\Phi_1=\Phi_2\circ h
$$
Roughly speaking, the idea of the proof is to find sufficiently many independent matching pairs so that we can apply the inverse function theorem and conclude that $h$ is $C^r$ when restricted to the local unstable leaf.

 In~\cite{GRH} the source of matching functions was given by normalized matching potentials (logarithms of Jacobians), where normalizations came from transfer operators associated to the expanding maps. In this paper we explore different matching functions which are given by periodic cycle functionals. Periodic cycles functionals were originally introduced by Katok and Kononenko~\cite{KK} in order to study the cohomological equation over partially hyperbolic diffeomorphisms.

\subsection{Results in dimension 3} We present several results for Anosov diffeomorphisms in dimension 3.

\begin{theorem}
\label{thm_main}
Let $L\colon\T^3\to\T^3$ be an Anosov automorphism with a pair of complex conjugate eigenvalues. Then there exists a $C^1$ neighborhood $\cU$ of $L$ such that all $C^{r}$, $r\ge 2$, Anosov diffeomorphisms $f_1, f_2\in\cU$ with matching Jacobian periodic data  are either $C^{r_*}$ conjugate or the SRB measure coincides with the measure of maximal entropy for $f_1$.
\end{theorem}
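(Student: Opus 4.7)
The plan is to implement the matching function strategy outlined in the introduction, with periodic cycle functionals playing the role of matching functions along the two-dimensional foliation carrying the complex conjugate eigenvalues of $L$.

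First, structural stability yields a H\"older conjugacy $h$ sending the stable and unstable foliations of $f_1$ to those of $f_2$. After replacing $f_i$ by $f_i^{-1}$ if necessary, we may assume the complex eigenvalue pair of $L$ lies in the unstable direction, so that $\dim E^u_{f_i}=2$ and $\dim E^s_{f_i}=1$. The crucial geometric observation is that the two-dimensional bundle $E^u_{f_i}$ does not admit any $Df_i$-invariant dominated sub-splitting, which is exactly the obstacle that prevents a one-dimensional de la Llave--Marco--Moriy\'on argument from working along unstable leaves. From matching Jacobian periodic data, the Liv\v{s}ic theorem provides continuous transfer functions $u^s,u^u$ satisfying
$$
\log J^\sigma f_2 \circ h - \log J^\sigma f_1 = u^\sigma\circ f_1 - u^\sigma, \quad \sigma\in\{s,u\},
$$
and by standard leaf-wise regularity $u^s$ is smooth along unstable leaves while $u^u$ is smooth along stable leaves. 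In the one-dimensional stable direction this already gives, via a de la Llave--Marco--Moriy\'on argument, that $h$ is $C^{r_*}$ along stable leaves.

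The heart of the proof, and the main obstacle, is to upgrade $h$ to be smooth along the two-dimensional unstable foliation, where the one-dimensional tools fail. Here I would construct matching functions out of Katok--Kononenko-type periodic cycle functionals: given a cycle $x_0\to x_1\to\cdots\to x_n=x_0$ whose segments alternate along stable holonomies and along unstable leaves, a telescoping sum of $\log J^\sigma f_1$ yields a scalar $\Phi_1$, and the matching of Jacobian periodic data forces $\Phi_1=\Phi_2\circ h$. Specializing to cycles inside a single unstable leaf then produces $C^{r_*}$ coordinates on $W^u_{f_1}$ and $W^u_{f_2}$ in which $h|_{W^u}$ becomes a composition of $C^{r_*}$ maps, so an inverse function theorem argument yields $h|_{W^u}\in C^{r_*}$. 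The one way this construction can fail is that the cycle functional built from $\log J^u f_1$ vanishes identically; by Liv\v{s}ic this is equivalent to $\log J^u f_1$ being cohomologous to a constant, which pressure considerations force to equal the topological entropy $h_{\mathrm{top}}(f_1)$, and this is exactly the condition that the SRB measure of $f_1$ equals its measure of maximal entropy---giving the dichotomy in the statement.

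Finally, once $h$ is $C^{r_*}$ along both $W^s$ and $W^u$, a Journ\'e-type regularity lemma combines these into global $C^{r_*}$-smoothness of $h$, completing the argument.
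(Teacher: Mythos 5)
Your high-level strategy matches the paper's: periodic cycle functionals as matching functions along unstable leaves, an inverse function theorem step, a dichotomy via the cohomology class of $\log J^u f_1$, and Journé's lemma to glue stable and unstable regularity. However, two essential steps are missing or incorrect.

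First, the inverse function theorem along the two-dimensional unstable leaf requires \emph{two} locally independent matching functions, and you never explain how to get the second one. You correctly observe that $E^u$ carries no invariant dominated sub-splitting, but you present the complex-eigenvalue hypothesis only as ``the obstacle that prevents one-dimensional methods from working,'' whereas in the paper it is used constructively: at a fixed point $p$ whose unstable derivative $Df_1|_{E^u(p)}$ has non-real eigenvalues, the paper takes a single non-degenerate simple PCF $\rho=\rho^{\phi_1}_{c,b}$ and pairs it with $\rho\circ f_1|_{W^u(p)}$, and the absence of a real eigenvalue is precisely what forces $D\rho(p)$ and $D(\rho\circ f_1)(p)$ to be linearly independent. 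Without this (or an alternative device) the map you invert is scalar-valued and the inverse function theorem does not apply on a $2$-dimensional leaf. Minimality of $W^s$ is then needed to move a non-degenerate $\rho$ to the special leaf $W^u(p)$ and afterwards to spread the local $C^1$ regularity everywhere; this step is also absent from your sketch.

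Second, your claim that the cycle functionals directly ``produce $C^{r_*}$ coordinates on $W^u$'' is wrong. Simple PCFs involve stable holonomies and the potential $\log J^u f$, and under the standing hypotheses they are only $C^1$ along unstable leaves (Lemmas~\ref{lemma1} and~\ref{lemma2} of the paper); the unstable bundle for such $f$ is only $C^{3/2-\eps}$ and the stable holonomy only $C^{3-\eps}$, so no matter how smooth $f$ is you will never obtain $C^{r_*}$ coordinates from PCFs alone. The inverse function theorem therefore yields only $C^1$ regularity of $h$ along $W^u$, and a genuinely separate bootstrap argument via non-stationary linearization (Proposition~\ref{prop_normal_forms} and its uniqueness property, together with the $\C$-linearity lemma at the fixed point with non-real eigenvalues) is required to upgrade $C^1$ to $C^r$ along unstable leaves. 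In addition, passing from vanishing of simple PCFs to ``$\log J^u f_1$ is cohomologous to a constant'' requires first propagating vanishing from simple loops to all null-homologous $us$-loops (the inductive decomposition in the paper) and then invoking the refined Proposition~\ref{prop_pcf}, which uses that $\mathrm{id}-f_*$ is injective on $H_1(\T^3,\Z)$; your invocation of Livšic glosses over this.
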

Above $r_*=r$ if $r$ is not integer and $r_*=(r-1)+Lip$ if $r$ is an integer.
Note that there is a unique topological conjugacy $h$ which is $C^0$ close to $id_{\T^3}$, $h\circ f_1=f_2\circ h$, coming from structural stability. The condition on matching of Jacobian periodic data is imposed relative to this conjugacy $h$.
\begin{remarks} 
\begin{enumerate}
\item
Recall that SRB and MME measures coincide if and only $-\log J^uf_1$ is cohomologous to a constant~\cite{B}. By the Livshits theorem~\cite{L}, we know that $-\log J^uf_1$ is not cohomologous to constant if and only if there are two periodic points with different unstable Lyapunov exponents. Hence the property of SRB measure being different from MME measure can be detected from two periodic points with different  unstable Lyapunov exponents. Hence the above theorem solves the smooth rigidity problem in a $C^1$ neighborhood of $L$ on a $C^1$-open and $C^\infty$-dense subset. The obvious remaining problem is to handle the case when $-\log J^uf_1$ is cohomologous to a constant. It is not hard to see by perturbing $L$  along unstable foliation that the conjugacy is not necessarily smooth if we only assuming matching of Jacobian periodic data. However, the problem that remains in this case is establishing smoothness of the conjugacy under the assumption of matching of (full) periodic data. 
\item We can replace the assumption on $C^1$-closeness to $L$ by an appropriate bunching assumption and existence of a periodic points with complex conjugate eigenvalues.
\item If both $f_1$ and $f_2$ are volume preserving then it is enough to assume matching of unstable Jacobian periodic data because the stable Jacobian periodic data are given by reciprocals and, hence, match automatically.
\end{enumerate}
\end{remarks}

 Given two conjugate Anosov diffeomorphisms $f_1$ and $f_2$, $h\circ f_1=f_2\circ h$, and H\"older continuous functions $\phi_1, \phi_2:M\to\R$, we say that $(f_1,\phi_1)$ is {\it equivalent} to $(f_2, \phi_2)$ and write 
$$(f_1,\phi_1)\sim (f_2,\phi_2)$$
 if there exists a continuous function $u:M\to\R$ such that 
$$\phi_1-\phi_2\circ h=u-u\circ f_1$$
Then, by the Livshits theorem~\cite{L}, $(f_1,\phi_1)\sim (f_2,\phi_2)$ if and only if
for every periodic point  $x\in Fix(f_1^n)$
	$$
	\sum_{k=0}^{n-1}\phi_1(f_1^k(x))=\sum_{k=0}^{n-1}\phi_2(f_2^k(h(x)))
	$$
	
Also recall that a potential $\phi\colon M\to\R$ is called an {\it almost coboundary} over $f\colon M\to M$ if $\phi$ is cohomologous to a constant, that is,
$$
\phi=u-u\circ f+c
$$
for some function $u$ and a constant $c$.

In fact, when $f_1$ and $f_2$ are at least $C^3$, Theorem~\ref{thm_main} is a consequence of the following more general result.
\begin{theorem}
\label{thm_tech}
Let $L\colon\T^3\to\T^3$ be an Anosov automorphism with a pair of complex conjugate eigenvalues whose absolute value is greater than 1. Fix a number $\kappa\in(\frac12,1]$. Then there exists a $C^1$ neighborhood $\cU$ of $L$ such that if $(f_1,\phi_1)\sim (f_2,\phi_2)$, where $f_1,f_2\in\cU$ are $C^r$, $r\ge2+\kappa$, and $\phi_1,\phi_2\in C^{1+\kappa}(\T^3)$, then either $h$ is uniformly $C^r$ along unstable leaves or $\phi_1$ is an almost coboundary over $f_1$.
\end{theorem}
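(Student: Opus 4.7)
My plan is to realize $h|_{W^u_1}$ as a composition of two $C^r$ maps via the inverse function theorem, where the role of ``coordinate chart'' on unstable leaves is played by matching functions built from the unstable periodic cycle functional (PCF). For a Hölder $\phi\colon M\to \R$ and $y\in W^u_f(x)$, set
\[
 \Phi^u_{\phi,f}(x,y) \;:=\; \sum_{k=1}^{\infty}\bigl[\phi(f^{-k}y)-\phi(f^{-k}x)\bigr];
\]
the series converges by exponential contraction of $f^{-1}$ along $W^u$, and since $r\ge 2+\kappa$ makes the unstable foliation of class $C^{1+\kappa}$, one checks that $\Phi^u_{\phi_i,f_i}(x,\cdot)$ is a $C^{1+\kappa}$ function along $W^u_{f_i}(x)$. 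Telescoping the cohomology equation $\phi_1-\phi_2\circ h = u - u\circ f_1$ backward and conjugating by $h$ yields the matching identity
\[
 \Phi^u_{\phi_1,f_1}(x,y)\;-\;\Phi^u_{\phi_2,f_2}\bigl(h(x),h(y)\bigr)\;=\;u(x)-u(y),\qquad y\in W^u_{f_1}(x).
\]

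Since $\dim W^u = 2$, one real-valued PCF is not enough for the inverse function theorem. The complex-conjugate eigenvalue structure of $L$ should supply the missing real dimension: the bunching that comes from $C^1$-closeness to $L$ endows $E^u_{f_i}$ with a Hölder $Df_i$-invariant complex (conformal) structure, and one can use this to construct a second, linearly independent $C^{1+\kappa}$ matching function $\Psi^u_{\phi_1,f_1}$ satisfying an analogous matching identity --- for instance by coupling the PCF construction with the ``rotation cocycle'' defined by this complex structure, so that $\Psi^u$ becomes the $\C$-valued variant of $\Phi^u$. Bundling the two gives a $C^{1+\kappa}$ map
\[
 \Xi_i(x,\cdot) \;:=\; \bigl(\Phi^u_{\phi_i,f_i}(x,\cdot),\,\Psi^u_{\phi_i,f_i}(x,\cdot)\bigr)\colon W^u_{f_i}(x)\to \R^2.
\]

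The dichotomy in the conclusion enters through non-degeneracy of $d\Xi_1$: if $\phi_1 = w - w\circ f_1 + c$ is an almost coboundary then $\Phi^u_{\phi_1,f_1}(x,y)$ collapses to $w(x)-w(y)$ and the pair $\Xi_1$ degenerates; otherwise, a Livshits-type argument applied along dense periodic orbits forces $d\Xi_1$ to be non-degenerate on a dense open subset of every unstable leaf. On this set the matching identities invert by the inverse function theorem, giving $h|_{W^u_1} = \Xi_2^{-1}\circ\Xi_1$ modulo a continuous correction involving $u$, hence $h|_{W^u_1}\in C^{1+\kappa}$. Feeding this back into the coboundary equation $u\circ f_1 - u = \phi_2\circ h - \phi_1$ upgrades $u|_{W^u_1}$ to $C^{1+\kappa}$, which upgrades the correction term, and a finite bootstrap reaches $C^r$. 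Uniformity of the $C^r$ norm along leaves follows from uniform convergence of the PCF series, as the tail estimates in the definition of $\Phi^u$ are leaf-independent.

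The main obstacle is the second step: constructing $\Psi^u_{\phi_1,f_1}$ from the complex-eigenvalue structure of $L$, checking that it is $C^{1+\kappa}$ along unstable leaves, and verifying that the pair $(\Phi^u_{\phi_1},\Psi^u_{\phi_1})$ is non-degenerate exactly when $\phi_1$ is not an almost coboundary. Both points rely essentially on the narrow-band bunching near $L$ and on the two-dimensional complex geometry of the unstable distribution, and I expect this is where the real technical weight of the proof lies; the bootstrap and inverse-function-theorem stages in the third paragraph are comparatively standard once the matching functions are in hand.
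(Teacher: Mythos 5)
Your overall plan---use PCF-type matching functions and the inverse function theorem, then bootstrap---is the right idea and is essentially the paper's strategy. But two central steps of your outline are wrong or missing, and they are precisely where the paper does the real work.

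\textbf{The $u$-correction breaks the inverse function theorem step.} You define $\Phi^u_{\phi,f}(x,\cdot)$ using only the backward sum along the unstable leaf; this is \emph{not} a periodic cycle functional on a loop, so it does not annihilate coboundaries, and your matching identity necessarily carries the term $u(x)-u(y)$. Since $u$ is only H\"older (that is all Livshits gives), the resulting map $\Xi_1 - \Xi_2\circ h$ is not constant but equals a merely H\"older function of $y$, so one cannot conclude $h|_{W^u} = \Xi_2^{-1}\circ\Xi_1$ nor deduce any regularity of $h$ from the inverse function theorem. The paper avoids this entirely by using the \emph{simple PCF} $\rho^\phi_{a,b}$, which is the PCF of a closed four-leg $us$-adapted loop (two stable legs, two unstable legs, going through the stable holonomy $Hol_{a,b}$). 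On a closed loop, $PCF_\gamma(\phi)=u(\gamma(0))-u(\gamma(1))=0$ for any coboundary, so the matching $\rho^{\phi_1}_{a,b}=\rho^{\phi_2}_{h(a),h(b)}\circ h$ is exact, with no $u$ term. Your ``continuous correction involving $u$'' cannot simply be absorbed; you need the loop construction. (Also, proving $C^1$ regularity of the loop PCF is itself nontrivial because of the two legs going through the stable holonomy; this is exactly where the bunching hypothesis $\lambda_+<\mu_-^\kappa$ and the paper's Lemma~\ref{lemma1} are used. Your $\Phi^u$ avoids that difficulty only because it omits the holonomy legs --- which is what creates the $u$ problem.)

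\textbf{The second matching function is obtained by composing with the dynamics, not by constructing a conformal/rotation cocycle.} Your second function $\Psi^u$ built from a $Df$-invariant complex structure is left entirely unspecified, and in any case the non-degeneracy claim (``a Livshits-type argument forces $d\Xi_1$ to be non-degenerate'') is unjustified as stated. The paper's mechanism is simpler and is the crux of the whole argument: it locates a fixed point $p$ of $f_1$ at which $Df_1|_{E^u(p)}$ has a non-real complex eigenvalue (this is the only place where the complex-conjugate assumption on $L$ enters), uses minimality of the stable foliation to arrange a simple PCF $\rho^{\phi_1}_{c,b}$ whose differential at $p$ is nonzero, and then takes as the second matching function simply $\rho^{\phi_1}_{c,b}\circ f_1$. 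Because $Df_1(p)$ acts on the plane $E^u(p)$ with no real eigenvalues, the kernel of $D(\rho^{\phi_1}_{c,b}\circ f_1)(p)$ is the preimage under $Df_1(p)$ of the kernel of $D\rho^{\phi_1}_{c,b}(p)$, and these are linearly independent lines. The pair $\euP = (\rho^{\phi_1}_{c,b},\,\rho^{\phi_1}_{c,b}\circ f_1)$ is then a local $C^1$ chart on $W^u(p)$, and the exact matching gives $h|_{W^u(p)} = (\euP^{\phi_2})^{-1}\circ\euP^{\phi_1}$ near $p$. No conformal structure or rotation cocycle is needed. The dichotomy likewise comes out differently: if \emph{every} simple PCF $\rho^{\phi_1}_{a,b}$ is identically zero, an inductive decomposition of null-homologous $us$-loops into simple ones together with an abelianized Livshits argument (Proposition~\ref{prop_pcf}) gives that $\phi_1$ is an almost coboundary; you do not get to assume non-degeneracy just because $\phi_1$ is not a coboundary.

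Finally, the bootstrap from $C^1$ to $C^r$ is not a ``finite bootstrap feeding regularity back into the coboundary equation'' as you suggest; the paper uses the uniqueness of non-stationary linearizations (normal forms) along the unstable foliation, applied first at the conformal fixed point and then spread by density of its unstable leaf. Your sketch of that step is too vague to evaluate, but the gaps above are the fundamental ones.
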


\begin{corollary}
\label{cor1}
Let $L\colon\T^3\to\T^3$ be an Anosov automorphism with a pair of complex conjugate eigenvalues of absolute value $>1$. Then there exists a $C^1$ neighborhood $\cU$ of $L$ such that all $C^{r}$, $r\ge 3$, Anosov diffeomorphisms $f_1, f_2\in\cU$ with matching stable Jacobian periodic data  are either $C^{r_*}$ conjugate or the SRB measure coincides with the measure of maximal entropy for $f_1^{-1}$.
\end{corollary}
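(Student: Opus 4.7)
\medskip
\noindent\emph{Proof plan.} The strategy is to deduce the corollary from Theorem~\ref{thm_tech} applied to the stable Jacobian potentials $\phi_i:=-\log J^s f_i$, $i=1,2$. Because $L$ has complex conjugate eigenvalues of modulus $\lambda>1$ and $\det L=\pm1$, its unstable bundle is two-dimensional and conformal, while its stable bundle is one-dimensional with contraction rate $1/\lambda^2$. The bunching inequality
$$
\|Df_i|_{E^s_i}\|\cdot\|Df_i|_{E^u_i}\|^{\alpha}<1
$$
then persists for every $\alpha<2$ throughout a sufficiently small $C^1$ neighborhood of $L$, so by the classical Hirsch--Pugh--Shub regularity theorem the one-dimensional distribution $E^s_i$ is $C^{1+\kappa}$ for any fixed $\kappa\in(1/2,1)$, provided $f_i\in C^r$ with $r\ge 3$. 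Since $E^s_i$ is one-dimensional, $J^s f_i=|Df_i|_{E^s_i}|$, and hence $\phi_i\in C^{1+\kappa}(\T^3)$.

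By the Livshits theorem recalled before Theorem~\ref{thm_tech}, matching of stable Jacobian periodic data translates exactly into $(f_1,\phi_1)\sim(f_2,\phi_2)$. Applying Theorem~\ref{thm_tech} will then yield a dichotomy: either $h$ is uniformly $C^r$ along the unstable leaves of $f_1$, or $\phi_1=-\log J^s f_1$ is an almost coboundary over $f_1$. In the second alternative, the substitution $y=f_1^{-1}(x)$ shows that $\log J^u f_1^{-1}$ is cohomologous to a constant over $f_1^{-1}$, which is precisely the criterion for the SRB measure of $f_1^{-1}$ to coincide with its measure of maximal entropy; this is the exceptional case recorded in the statement.

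It remains to treat the first alternative. Since the stable foliation of $f_1$ is one-dimensional, the matching stable Jacobian periodic data allows one to integrate the leafwise cocycle in the style of de la Llave and Marco--Moriy\'on to conclude that $h$ is uniformly $C^{r_*}$ along stable leaves as well. Journ\'e's regularity lemma will then combine the two leafwise regularities into global $C^{r_*}$ smoothness of $h$, finishing the proof. The main subtlety lies in the very first step: it is the conformality of the unstable block of $L$ that pushes the bunching exponent all the way up to $2$, yielding the $C^{1+\kappa}$ regularity of the potential with $\kappa>\tfrac12$ that Theorem~\ref{thm_tech} demands; without the complex conjugate eigenvalue hypothesis this mechanism would not deliver a regular enough $\phi_i$.
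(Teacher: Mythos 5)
Your proposal is correct and follows essentially the same route as the paper: apply Theorem~\ref{thm_tech} to $\phi_i=\pm\log J^s f_i$, which is $C^{1+\kappa}$ with $\kappa>\tfrac12$ because the stable bunching parameter near $L$ exceeds $3/2$ (the paper in fact records $E^s$ as $C^2$), interpret the coboundary alternative via $\log J^s f_1=-\log J^u f_1^{-1}\circ f_1$ as the SRB$=$MME condition for $f_1^{-1}$, and in the regular alternative obtain $C^{r_*}$ smoothness by the de la Llave argument along the one-dimensional stable foliation combined with Journ\'e's lemma.
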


The next corollary established smooth conjugacy only assuming matching of full Jacobian periodic data in the dissipative setting.

\begin{corollary} 
\label{cor2}
Let $L\colon\T^3\to\T^3$ be an Anosov automorphism with a pair of complex conjugate eigenvalues. Fix a number $\kappa\in(\frac12,1]$.  Then there exists a $C^1$ neighborhood $\cU$ of $L$ such that if $f_1$ is not volume preserving and $(f_1,\log J f_1)\sim (f_2,\log Jf_2)$, where $f_1,f_2\in\cU$ are $C^r$, $r\ge2+\kappa$, and $J f_i$ is full Jacobian with respect to a fixed volume form, $i=1,2$, then $f_1$ and $f_2$ are $C^{r_*}$ conjugate.
\end{corollary}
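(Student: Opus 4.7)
The plan is to apply Theorem~\ref{thm_tech} to the given equivalence, use the resulting unstable-leaf smoothness of $h$ to propagate the matching into the stable direction, and then finish with a one-dimensional rigidity result and Journ\'e's lemma.

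First, by passing to $(f_1^{-1}, f_2^{-1})$ if necessary (the relation $(f_1,\log Jf_1)\sim(f_2,\log Jf_2)$ is easily checked to be preserved under inversion, and inversion sends the complex eigenvalues of $L$ to their reciprocals), we may assume that the complex conjugate eigenvalues of $L$ have absolute value greater than $1$, and apply Theorem~\ref{thm_tech} with $\phi_i = \log Jf_i\in C^{r-1}\subset C^{1+\kappa}$. The theorem yields one of two alternatives: either $h$ is uniformly $C^r$ along unstable leaves of $f_1$, or $\log Jf_1 = u - u\circ f_1 + c$ is an almost coboundary for some constant $c$. In the second alternative, the constant $c$ equals $(1/n)\log|\det Df_1^n(p)|$ for every period-$n$ point $p$; a standard argument forces $c=0$, so that $\nu = e^u\cdot m$ is a smooth $f_1$-invariant measure equivalent to the reference volume $m$, contradicting the hypothesis that $f_1$ is not volume preserving. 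Hence $h$ is uniformly $C^r$ along unstables.

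Second, I extract matching of stable Jacobian periodic data. Differentiating the conjugacy equation $h\circ f_1 = f_2\circ h$ along the unstable direction (which is legitimate since $h|_{W^u}$ is $C^r$) and taking determinants on the $2$-dimensional unstable plane gives, in logarithmic form,
\[
\log J^uf_1(x) - \log J^uf_2(h(x)) = v(x) - v(f_1(x)),
\]
where $v$ is the log-Jacobian of the unstable-leaf map $h|_{W^u}$. Thus $(f_1,\log J^uf_1)\sim(f_2,\log J^uf_2)$. Subtracting from the hypothesis $(f_1,\log Jf_1)\sim(f_2,\log Jf_2)$ and using $\log Jf = \log J^sf + \log J^uf$ yields $(f_1,\log J^sf_1)\sim(f_2,\log J^sf_2)$, which by Livshits is matching of stable Jacobian periodic data.

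Third, since the stable foliation is one-dimensional (spanned by the real eigenvector of $L$), matching of stable Jacobian periodic data for a 1D stable foliation falls under the classical rigidity results of de~la~Llave and Marco-Moriy\'on~\cite{dlL0, MM, dlL}, which give that $h$ is uniformly $C^{r_*}$ along stable leaves. Combined with the $C^r$ smoothness of $h$ along unstables and the fact that $W^s$ and $W^u$ are H\"older transverse, Journ\'e's lemma upgrades $h$ to a globally $C^{r_*}$ diffeomorphism, as desired.

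The main obstacle is the step ruling out the almost coboundary alternative from the hypothesis ``$f_1$ is not volume preserving.'' Since the definition of almost coboundary permits a nonzero constant $c$, one has to argue that this constant is actually forced to vanish, so that the resulting H\"older coboundary $u$ produces a smooth $f_1$-invariant volume contradicting the hypothesis. Once this measure-theoretic point is settled, the remainder of the proof---differentiation along unstables, 1D rigidity, and Journ\'e's lemma---is essentially routine.
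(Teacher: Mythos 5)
Your proposal is correct and follows essentially the same route as the paper: apply Theorem~\ref{thm_tech} to $\phi_i=\log Jf_i$, rule out the almost-coboundary alternative by observing it would make $f_1$ preserve a smooth volume (the paper phrases this as ``$f_1$ dissipative implies $\phi_1$ not cohomologous to a constant''), then combine matching of full and unstable Jacobians to get matching of stable Jacobians, and finish via de~la~Llave's stable-leaf SRB-density argument and Journ\'e's lemma. You spell out a couple of steps the paper leaves implicit (the forced $c=0$, the derivation of unstable Jacobian matching from $C^r$ smoothness of $h|_{W^u}$), which is fine; the only minor imprecision is attributing the 1D-stable-direction bootstrap to \cite{MM,dlL0} as well---the relevant mechanism is the SRB-measure density argument from \cite{dlL}.
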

To the best of our knowledge this is the first result which only uses full Jacobians at periodic points. Analogous statement is unknown in dimension 2 and cannot be proved with the method of this paper.

Our approach is also useful in the case of real spectrum.
Namely, we can partially improve results of~\cite{GG} by bootstrapping regularity of the conjugacy under an additional bunching assumption. 

\begin{theorem} 
\label{thm_misha}
Let $L\colon\T^3\to\T^3$ be a hyperbolic automorphisms with real spectrum $\{\mu^{-1},\lambda,\lambda^\alpha\}$, where $0<\mu^{-1}<1<\lambda<\lambda^\alpha$. Assume that $\alpha<\frac{1+\sqrt{17}}{4}$. Then there exists a sufficiently small $C^1$ neighborhood $\cU$ of $L$ and an open dense subset $\cV\subset\cU$ such that if smooth Anosov diffeomorphisms $f_1, f_2\in\cV$ have the same Jacobian periodic data then they are smoothly conjugate.
\end{theorem}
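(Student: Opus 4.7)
The plan is to carry the matching-functions methodology of this paper into the real-spectrum setting by exploiting the weak/strong unstable splitting instead of a complex pair. For every $f$ in a sufficiently small $C^1$ neighborhood $\cU$ of $L$ there is a dominated splitting $T\T^3 = E^s \oplus E^{wu} \oplus E^{su}$ into three one-dimensional $Df$-invariant subbundles with associated foliations $W^s$, $W^{wu}$, $W^{su}$. I would take $\cV$ to be the open dense set on which neither $\log J^{wu} f$ nor $\log J^{su} f$ is an almost coboundary; this is the generic nondegeneracy condition that prevents the weak/strong Lyapunov spectra from collapsing along periodic orbits.

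First, by the Livshits theorem, matching of Jacobian periodic data gives $(f_1, \log J^s f_1) \sim (f_2, \log J^s f_2)$ and $(f_1, \log J^u f_1) \sim (f_2, \log J^u f_2)$. I would then invoke the results of Gogolev--Guysinsky \cite{GG}, which on a $C^1$-open neighborhood of $L$ in the real spectrum case produce an initial $C^{1+\text{H\"older}}$ regularity of $h$, and in particular smooth invariant splittings that are transported by $h$. This sets up $h$ as a H\"older diffeomorphism whose derivative along $W^s$ and along $W^{wu}$ is already controlled.

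Next, I would bootstrap regularity along $W^{su}$ using periodic cycle functionals as in the proof of Theorem~\ref{thm_tech}. The cohomological data from matching Jacobian potentials, together with dynamical combinations (logarithms of Jacobians of restrictions to $E^{wu}$ and $E^{su}$, twisted by Livshits coboundaries), supply a collection of $C^{1+\kappa}$ matching functions $\Phi_1 = \Phi_2 \circ h$ whose joint map is nondegenerate transverse to $E^{su}$. An inverse-function-theorem argument then expresses the strong unstable holonomy of $h$ in terms of compositions of smooth functions. The bunching constraint $\alpha < \tfrac{1+\sqrt{17}}{4}$ arises as the positive root of $2\alpha^2 - \alpha - 2 = 0$, which is precisely the inequality that makes the regularity afforded by the periodic cycle summation outweigh the gap between the weak and strong unstable expansion rates $\lambda$ and $\lambda^\alpha$; without it the candidate matching functions no longer have enough transverse regularity to close the inverse-function step.

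Finally, once $h$ is $C^\infty$ along each of the one-dimensional foliations $W^s$, $W^{wu}$, $W^{su}$, a standard application of the Journ\'e lemma promotes this to global $C^\infty$ smoothness of $h$. The main obstacle is the nondegeneracy step: under matching of Jacobian periodic data alone (rather than full periodic data as in \cite{GG}) one must verify that the periodic cycle functionals built from Jacobian-type potentials yield a rich enough family of matching functions to satisfy the inverse-function-theorem hypothesis in the $W^{su}$ direction. The genericity defining $\cV$ together with the explicit bunching bound are precisely what allow this nondegeneracy to be verified.
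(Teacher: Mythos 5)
Your proposal contains a genuine gap at the point where you invoke Gogolev--Guysinsky~\cite{GG}. That result produces $C^{1+\textup{H\"older}}$ conjugacy under matching of \emph{full} periodic data, which in the real-spectrum setting means matching each of the one-dimensional multipliers $J^s f^n(p)$, $J^{wu}f^n(p)$, $J^{uu}f^n(p)$ \emph{separately}. The hypothesis here only gives matching of $J^s f^n(p)$ and of the product $J^u f^n(p)=J^{wu}f^n(p)\cdot J^{uu}f^n(p)$; one does \emph{not} know that the weak and strong unstable multipliers match individually. So the baseline $C^{1+\textup{H\"older}}$ regularity and transported splittings you appeal to are precisely what cannot be assumed --- obtaining them from the weaker Jacobian data is the whole content of the theorem, so the argument as written is circular.

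The paper's route avoids this by applying the General Matching Theorem (Theorem~\ref{thm_tech2}, via the construction of Section~\ref{sec_41}) with regularity parameter $k=\frac{2\eta+2}{2\eta+1}$ to produce an invariant subfoliation $W_1\subset W^u_{f_1}$ tangent to $E_1$, together with the guarantee that $h$ is uniformly $C^k$ transversely to $W_1$ and that the simple PCFs of any matching $C^{1+\kappa}$ potential vanish along $W_1$. The proof is then a case analysis on $\dim E_1$. When $\dim E_1=0$ one has $h$ uniformly $C^k$ along $W^u_{f_1}$, and the bound $\alpha<\frac{1+\sqrt{17}}{4}$ enters as the requirement $k>\eta$, which is exactly what the normal-form uniqueness (Proposition~\ref{prop_normal_forms}, item~6) needs to bootstrap $h$ to $C^r$ along unstable leaves --- not as a transversality condition for an inverse-function step, which is the mechanism from Theorem~\ref{thm_tech} in the complex-eigenvalue case and is not used here. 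When $\dim E_1=1$ the subfoliation must be $W^{wu}$ or $W^{uu}$, and the Lipschitz-rigidity results of~\cite{G3,GShi} show the complementary Jacobian is cohomologous to a constant, contradicting $f_1\in\cV$. When $\dim E_1=2$, vanishing of simple PCFs for the full Jacobian $\log Jf_1$ forces $\log Jf_1$ to be an almost coboundary, again contradicting $f_1\in\cV$. Note in particular that the paper's $\cV$ requires $\log Jf$ (or, in the variant remark, $\log J^sf$) to be non-cohomologous to a constant in addition to your two conditions on $\log J^{wu}f$ and $\log J^{uu}f$; without that third condition, Case~III has no contradiction. You should also justify the improved H\"older regularity of the simple PCFs (Lemma~\ref{lemma_52}), which is what makes the parameter $k$ above large enough; without that extra regularity the proof would stall at $C^1$ and the bootstrap would not close.
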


The set $\cV$ will be described explicitly in the course of the proof, in particular, $L\notin\cV$. It is interesting that this brings us rather close to full smooth classification in the neighborhood of $L$. For full classification one of course would need to assume matching of complete periodic data not only Jacobians. Still some serious difficulties remain, especially in the case when both stable and unstable Jacobians are cohomologous to constants. However, the case when all three Jacobians, including strong unstable are cohomologous to constants was already handled in~\cite{G2}. We repeat here a restricted version of a conjecture from~\cite{G2}.

\begin{problem} In the setting of the above theorem prove that if $f_1,f_2\in\cU$ have matching periodic data then they are smoothly conjugate.
\end{problem}


\subsection{A codimension one result}

We will call a toral automorphism $L\colon\T^d\to\T^d$ {\it generic} if
\begin{enumerate}
\item $L$ is hyperbolic, \ie the spectrum of $L$ is disjoint with the unit circle in $\C$;
\item $L$ is irreducible, \ie its characteristic polynomial is irreducible over $\Q$;
\item no three eigenvalues of $L$ have the same absolute value;
\item if two eigenvalues of $L$ have the same absolute value then they are a pair of complex conjugate eigenvalues.
\end{enumerate}

Local $C^{1+\textup{h\"older}}$ conjugacy class of a generic automorphism $L$ was characterized in terms of periodic data by Gogolev-Kalinin-Sadovskaya~\cite{GKS}. They also proved that ``most'' automorphisms of $\T^d$ are generic, that is, the proportion of non-generic automorphisms of $\T^d$ goes to $0$ as $\|L\|\to+\infty$. The following result provides $C^{1+\textup{h\"older}}$ conjugacy for very non-algebraic diffeomorphisms in a neighborhood of $L$ assuming coincidence of Jacobian periodic data. However, we need to impose further restrictions on $L$ --- codimension one and a bunching condition.

\begin{theorem}
\label{thm_codim1}
Let $L\colon\T^d\to \T^d$ be a generic automorphism with one dimensional stable subspace. Assume that
$$
(\log\mu)^2-(\log\xi_l)^2>\log\mu(\log\xi_l-\log\xi_1)
$$
where $\mu^{-1}$ is the absolute value of the stable eigenvalue, $\xi_1$ is the smallest absolute value of the eigenvalues which are greater than 1 and $\xi_l$ is the largest absolute value of the eigenvalues of $L$.

Then there exists a $C^1$ neighborhood $\cU$ of $L$ in $\textup{Diff}^r(\T^d)$, $r\ge 3$, and a $C^r$-dense $C^1$-open subset $\cV\subset\cU$ such that if $f_1,f_2\in\cV$ have matching Jacobian periodic data then $f_1$ and $f_2$ are $C^{1+\eps}$ conjugate for some $\eps>0$.
\end{theorem}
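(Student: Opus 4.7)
The overall plan is to bootstrap the H\"older conjugacy given by structural stability to $C^{1+\eps}$ regularity by establishing regularity first along stable leaves, then along unstable leaves, and finally combining the two. First I would use $C^1$-closeness to $L$ to produce the unique topological conjugacy $h$, $h\circ f_1=f_2\circ h$, and then convert the matching Jacobian periodic data into cohomological equations via Livshits' theorem \cite{L}:
\[
\log J^s f_1-\log J^s f_2\circ h=u^s-u^s\circ f_1, \quad \log J^u f_1-\log J^u f_2\circ h=u^u-u^u\circ f_1,
\]
with continuous transfer functions $u^s,u^u$.

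Because $E^s$ is one-dimensional, the stable Jacobian is the full derivative along $W^s$, so the classical one-dimensional theory of de la Llave--Marco--Moriy\'on \cite{dlL0,MM,dlL} immediately gives that $h$ is uniformly $C^{1+\eps}$ along stable leaves. The substance of the argument is therefore to produce $C^{1+\eps}$ regularity of $h$ along the $(d-1)$-dimensional unstable foliation. The generic spectral assumptions on $L$ force a finest dominated splitting $E^u=E_1\oplus\cdots\oplus E_k$ into one- and two-dimensional invariant pieces that persists under $C^1$-perturbation of $L$, together with corresponding strong-unstable sub-foliations inside $W^u$. The bunching inequality
\[
(\log\mu)^2-(\log\xi_l)^2>\log\mu(\log\xi_l-\log\xi_1)
\]
is calibrated so that the slowest of these sub-foliations are regular enough transversally within $W^u$ to carry the matching function / periodic cycle functional machinery developed for Theorem~\ref{thm_tech}.

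Applying that machinery on each invariant piece $E_i$, with the potentials supplied by the sub-Jacobians that come from the Livshits equations in Step~1, yields a family of locally matching $C^{1+\eps}$ functions $\Phi_{i,1}=\Phi_{i,2}\circ h$ along $W^u$. The open dense subset $\cV\subset\cU$ is then defined by requiring that none of the sub-Jacobians associated to the pieces $E_i$ is an almost coboundary over $f_1$; this is a $C^\infty$-dense $C^1$-open condition since it is detected by comparing Lyapunov data on two periodic orbits, and it excludes the algebraic degenerate alternative in Theorem~\ref{thm_tech}. On $\cV$ the derivatives of the $\Phi_{i,1}$ span the tangent space to $W^u$, so an inverse function theorem argument forces $h|_{W^u}$ to be uniformly $C^{1+\eps}$. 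A final application of Journ\'e's lemma combines the stable and unstable regularity into global $C^{1+\eps}$ regularity of $h$.

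The main obstacle will be Step~3, namely producing enough independent matching functions within a single $(d-1)$-dimensional unstable leaf to pin down the conjugacy there. The one-dimensional case of Theorem~\ref{thm_tech} uses a single pair of matching functions along a one-dimensional foliation, whereas here we need the matching data on each invariant sub-piece $E_i$ to assemble compatibly and to vary independently in transverse directions inside $W^u$. The bunching hypothesis enters precisely to control the transverse regularity of the sub-foliations and hence of the $\Phi_{i,1}$; without it the inverse function theorem argument fails. A second delicate point is identifying $\cV$ carefully so that the degenerate alternative in Theorem~\ref{thm_tech} is excluded for every one of the pieces simultaneously, which is what reduces the full rigidity statement to the smoothness-of-$h$ argument described above.
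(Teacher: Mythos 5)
Your overall scaffolding (Livshits, one-dimensional stable smoothness, stable/unstable splitting, Journ\'e) matches the paper, but the heart of the argument---Step~3---is a genuine gap, and you concede as much yourself. You hope that matching functions $\Phi_{i,1}$ built ``on each invariant piece $E_i$'' will have derivatives that span $T W^u$, so that an inverse function theorem finishes. There is no mechanism given for why this spanning would hold, and in fact the spanning can fail even on $\cV$; in the 3D complex-eigenvalue case the spanning is manufactured by composing with $f_1$ and exploiting the rotational action of $Df_1$ on $E^u$, a trick that has no analogue for a $(d-1)$-dimensional unstable bundle with a real dominated splitting. The paper's actual route is essentially the contrapositive. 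It proves a General Matching Theorem (Theorem~\ref{thm_tech2}) that, rather than asserting spanning, \emph{constructs} an invariant $C^1$ subfoliation $W_i\subset W^u_{f_i}$ consisting of the directions on which all local matching functions fail to separate, with $h(W_1)=W_2$ and $h$ uniformly $C^1$ transversally to $W_1$. The proof then runs a case analysis on $\dim E_i$: the case $\dim E_i=0$ gives $C^1$ regularity along $W^u$ directly and then $C^{1+\eps}$ via the intermediate-distribution argument of \cite{GKS}; the case $\dim E_i=\dim E^u$ forces $\log Jf_1$ cohomologous to a constant via vanishing of all simple PCFs; and the intermediate cases are killed by showing that the $Df$-invariant $C^1$ subfoliation $W_1$ must coincide with some integral foliation of a sub-collection of the dominated pieces, whereupon Lemmas~\ref{lemma_lipschitz}, \ref{lemma_linear}, \ref{lemma_const2} (a nontrivial periodic-orbit/density/Hartman-linearization argument following~\cite{G3}) show that some intermediate Jacobian $\log J^j f_1$ is cohomologous to a constant, contradicting $f_1\in\cV$.

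Two further concrete issues. First, your definition of $\cV$ omits the full Jacobian: the paper's $\cV$ requires $\log Jf$, $\log J^1 f,\dots,\log J^l f$ all non-cohomologous to constants, and the $\log Jf$ condition is exactly what is needed to rule out the degenerate case $\dim E_i=\dim E^u$. Second, the phrase ``applying that machinery on each invariant piece $E_i$, with the potentials supplied by the sub-Jacobians'' does not reflect how the PCF machinery operates: the simple PCF $\rho_{a,b}^\phi$ is a function on the whole unstable leaf built from a single potential $\phi$ on $M$, not something assembled piece by piece, and its relevant output is the annihilator foliation $W_1$, not a family of coordinate charts on $W^u$. Without the General Matching Theorem and the Lipschitz-foliation rigidity lemmas, the argument does not close.
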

\begin{remark} The irreducibility of $L$ is, in fact, automatic from the codimension 1 assumption.
\end{remark}
\begin{remark} We remark that the new methods are employed to obtain smoothness of the conjugacy along the unstable foliation, while smoothness along the 1-dimensional stable foliation is standard~\cite{dlL}. The assumption that the stable foliation is 1-dimensional can be replaced with some more complicated assumption where older methods apply. For example, assumption like those in~\cite{GG, G} on the stable subbundle would allow for a higher dimensional stable foliation with real spectrum.
\end{remark}
The bunching assumption is a technical assumption which guarantees sufficient regularity of the Anosov splitting (in particular, both stable and unstable distributions are at least $C^1$) and $C^1$ regularity of periodic cycle functionals. Note that the bunching assumption is satisfied by automorphisms which are sufficiently close to conformal in the unstable subbundle. The set $\cV$ consists of very non-algebraic Anosov diffeomorphisms and will be explicitly described in the course of the proof. Roughly speaking the very non-algebraic condition says that Jacobians restricted along various invariant subbundles are not cohomologous to constants. Then, using these Jacobians we can obtain a supply of non-trivial matching functions. This set $\cV$ will not contain any volume preserving diffeomorphisms, however we have the following addendum.

\begin{add}
\label{add}
In the setting of the above theorem there also exists a $C^1$ neighborhood $\cU'$ of $L$ in the space of volume preserving diffeomorphisms $\textup{Diff}^r_{vol}(\T^d)$, $r\ge 3$, and a $C^r$-dense $C^1$-open subset $\cV'\subset\cU'$ such that if $f_1,f_2\in\cV'$ have matching Jacobian periodic data then $f_1$ and $f_2$ are $C^{1+\eps}$ conjugate for some $\eps>0$.
\end{add}

\subsection{Organization} The next section is devoted to background material. We begin by setting up the notation which will be used consistently throughout the paper and by recalling well-known results on regularity of invariant foliations for Anosov diffeomorophisms. Then we define periodic cycle functionals which are the primary technical tool of this paper. We explain that periodic cycle functionals provide a complete collection of obstructions to solving the cohomological equation over an Anosov diffeomorphism. Then we establish a technical lemma on $C^1$ regularity of periodic cycle functionals. The method of proof is standard, but this lemma is not present in the literature, so we included the proof. Then we also discuss the relation between periodic cycle functionals and Jacobians of stable holonomy maps. In the last subsection we recall a result on non-stationary linearization which we will need for bootstrapping regularity of the conjugacy.

Section~3 contains all the proofs for 3-dimensional Anosov diffeomorphisms with a fixed point which has complex eigenvalues in the unstable subspace. This is the simplest situation where our method yields new results. The proof is done in two steps. First $C^1$ regularity of the conjugacy along the unstable foliation is derived from matching periodic cycle functionals. Then the second step is to bootstrap regularity along unstable foliation using non-stationary linearization theory.


In Section~4 we prove what we call the General Matching Theorem which is a technical statement resulting from careful study of local spaces of matching functions. Specifically we obtain certain invariant sub-foliations of the unstable foliation associated to the local spaces of matching function and obtain a number of properties of this sub-foliation. 

Then in Section~5 we apply the General Matching Theorem to Anosov diffeomorphism on the 3-torus with real spectrum at periodic points. We are able to (partially) improve the previous rigidity result~\cite{GG} by bootstrapping regularity of the conjugacy. Besides the General Matching Theorem, the other technical ingredient of this proof is that using appropriate bunching we establish some extra regularity of periodic cycle functionals. Namely, we show that they are $C^{(5+\sqrt{17})/(3+\sqrt{17})}$ or better. This gives the same amount of extra regularity for the conjugacy which is then sufficient to proceed with non-stationary linearization bootstrap argument.

Finally in the last Section~6 we (partially) generalize the 3-dimensional results to Anosov diffeomorphisms on higher dimensional tori, especially for codimension one Anosov diffeomorphisms. This generalization uses the sub-foliation of the unstable foliation coming from the General Matching Theorem. Then the bulk of the proof consists of analyzing all possible cases for the position of this sub-foliation relative to the dominated splitting in the unstable subbundle. Unless, the conjugacy is $C^1$ we show that presence of such a sub-foliation gives some extra rigidity which is not consistent with our assumptions on Anosov diffeomorphisms.

We would like to thank the referee for many useful remarks which improved the exposition.

\section{Preliminaries}
We will denote by $W^s$ and $W^u$ the stable and unstable foliations which are tangent to the stable distribution $E^s$ and the unstable distribution $E^u$ of an Anosov diffeomorphism $f$, respectively. When it is necessary to indicate the Anosov diffeomorphism which is being considered we will write $E^s_f$, $W^s_f$, etc. By $W^s_{loc}(x)$ and $W^u_{loc}(x)$ we will denote local invariant manifolds centered at $x$ whose size is given by the local product structure constant. 

\subsection{Regularity of the stable foliation}
\label{sec_foliations_regularity}
We recall the definition of the stable bunching parameter $b^s(f)$ which controls regularity of the stable foliation, which defined in terms of exponential rates. Namely, for an Anosov diffeomorphism $f$ there exist constants $\mu_+>\mu_->1$ and $\lambda_+>\lambda_->1$ and $C>0$ such that 
\begin{multline*}
\frac1C\mu_+^{-n}\|v^s\|\le \|Df^n(v^s)\|\le C\mu_-^{-n}\|v^s\|\,\,\,\,\,\,\mbox{and}\\
 \frac1C\lambda_-^n\|v^u\|\le \|Df^n(v^u)\|\le C\lambda_+^n\|v^u\| \,\,\,\,\,\,\,\,\,\,
\end{multline*}
for all $n\ge 0$ and all $v^s\in E^s$, $v^u\in E^u$. Then the {\it stable bunching parameter} is given by
$$
b^s(f)=\frac{\log\lambda_-}{\log\lambda_+}+\frac{\log\mu_-}{\log\lambda_+}
$$
If $b^s(f)$ is not an integer (which we can always assume) then the stable foliation $W^s$ and the stable distribution $E^s$ are $C^{b^s(f)}$ regular~~\cite{HPS, Hass}. In particular, the stable holonomy maps are $C^{b^s(f)}$. (In fact, a better point-wise version of this result holds~\cite{Hass}.) Symmetrically, the unstable foliation $W^u$ and the unstable distribution $E^u$ are $C^{b^u(f)}$ regular, where the {\it unstable bunching parameter} is given by
$$
b^u(f)=\frac{\log\mu_-}{\log\mu_+}+\frac{\log\lambda_-}{\log\mu_+}
$$

Note that if the Anosov automorphism $L\colon\T^3\to\T^3$ has one dimensional stable subbundle corresponding to an eigenvalue $\mu^{-1}$, $|\mu|>1$, and a pair of complex conjugate eigenvalues $\lambda,\bar\lambda$, then for small perturbations $f$ we have $\mu_-^{-1}\ge|\mu|^{-1}$ or $\mu_-\le |\mu|=|\lambda|^2$. Also we have $\lambda_-\le|\lambda|\le\lambda_+$. Hence
$$\frac{\log\lambda_-}{\log\lambda_+}\le 1\,\,\,\,\,\,\mbox{and}\,\,\,\,\, \frac{\log\mu_-}{\log\lambda_+}\le 2
$$
For sufficiently small perturbations $f$ the above ratio will be close to 1 and 2, respectively, and, hence, the stable foliation is $C^{3-\eps}$, where $\eps>0$ can be taken arbitrarily small by controlling the size of the perturbation. In fact, we will only need $C^{2+\eps}$ regularity for $W^s$. Calculating $b^u(f)$ in this setting gives $C^{\frac32-\eps}$ regularity of $W^u$.

\subsection{Cohomological equation over Anosov diffeomorphisms and periodic cycle functionals}
\label{sec_pcf}

Here we recall an alternative approach to solving the cohomological equation $\phi=u-u\circ f+const$ over and Anosov diffeomorphisms $f$. This approach is due to Katok and Kononenko who introduced it to study the cohomological equation over partially hyperbolic diffeomorphisms~\cite{KK}. For Anosov diffeomorphisms this approach is much easier because local accessibility property always holds due to absence of the center direction, however we will need to slightly refine the argument in order to rely on a sub-collection consisting of null-homologous periodic cycle functionals only.

A piecewise smooth path $\gamma\colon[0,1]\to M$ is called a {\it $us$-adapted path} if each smooth leg is entirely contained in a stable or unstable leaf of $f$. If $\gamma(1)=\gamma(0)$ then we say that $\gamma$ is a {\it $us$-adapted loop}.

Given a H\"older continuous function $\phi\colon M\to \R$ the {\it chain functionals} are defined in the following way. If $\gamma$ lies entirely in a stable leaf then let
$$
PCF_\gamma(\phi)=\sum_{n\ge 0}\phi(f^n(\gamma(0)))-\phi(f^n(\gamma(1)))
$$
If $\gamma$ lies entirely in a stable leaf then let
$$
PCF_\gamma(\phi)=\sum_{n<0 }\phi(f^n(\gamma(1)))-\phi(f^n(\gamma(0)))
$$
Given a $us$-adapted path $\gamma=\gamma_1*\gamma_2*\ldots *\gamma_m$, with each leg $\gamma_i$ entirely contained in a stable or an unstable leaf let
$$
PCF_\gamma(\phi)=\sum_{i=1}^m PCF_{\gamma_i}(\phi)
$$
Note that the value $PCF_\gamma(\phi)$ only depends on the sequence of endpoints of $\gamma_i$. If $\gamma$ is a $us$-adapted loop then $PCF_\gamma(\phi)$ is called the {\it periodic cycle functional}.

If $\phi=u-u\circ f+const$ then, by a direct calculation
$PCF_\gamma(\phi)=u(\gamma(0))-u(\gamma(1))$. Hence values of periodic cycle functionals on $us$-adapted loops provide obstructions to solving the cohomological equation. It turns out that vanishing of these obstructions is a sufficient condition for existence of a solution.

\begin{proposition}[Katok-Kononenko]
\label{prop_KK}
If $f\colon M\to M$ is an Anosov diffeomorphism and $\phi$ is a H\"older continuous function such that $PCF_\gamma(\phi)=0$ for every $us$-adapted loop $\gamma$, then $\phi$ is  an almost coboundary; that is, there exists a constant $c$ and a H\"older continuous function $u$ such that $\phi=u-u\circ f +c$.
\end{proposition}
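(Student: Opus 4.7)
The plan is to explicitly construct the transfer function $u$ by ``integrating'' $\phi$ along $us$-adapted paths emanating from a fixed basepoint, a strategy borrowed from the solution of the cohomological equation for partially hyperbolic systems. A preliminary needed is $us$-accessibility: any two points $p,q\in M$ can be joined by a $us$-adapted path with finitely many legs. For an Anosov diffeomorphism this is immediate, because local product structure supplies a two-leg $us$-path between any two nearby points and, by compactness and connectedness of $M$, a finite number of such local paths can be concatenated to connect any prescribed pair of points.

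Fix a basepoint $p_0\in M$ and, for every $q\in M$, choose some $us$-adapted path $\gamma_q$ from $p_0$ to $q$; set $u(q)\defin PCF_{\gamma_q}(\phi)$. Since $PCF_{\gamma^{-1}}(\phi)=-PCF_\gamma(\phi)$ directly from the definition, the vanishing hypothesis on $us$-loops implies $PCF_\gamma(\phi)=PCF_{\tilde\gamma}(\phi)$ whenever $\gamma,\tilde\gamma$ have the same endpoints, so $u$ is well-defined. H\"older continuity of $u$ is then a purely local estimate: for $q$ and a nearby $q'$, connect them by the canonical two-leg $us$-path $\delta$ through the local product structure point $[q,q']$, so that $u(q')-u(q)=PCF_\delta(\phi)$. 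Because $f$ contracts stable distances and $f^{-1}$ contracts unstable distances exponentially, and because $\phi$ is H\"older, each of the two sums defining $PCF_\delta$ converges geometrically and one obtains an estimate $|u(q')-u(q)|\le C\,d(q,q')^\alpha$ for some $\alpha>0$ depending only on the H\"older exponent of $\phi$ and the hyperbolicity rates of $f$.

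The almost-coboundary equation follows from a single algebraic identity. For any $us$-adapted path $\gamma$ from $a$ to $b$, a reindexing $n\mapsto n+1$ on each leg produces
\[
PCF_{f\circ\gamma}(\phi)=PCF_\gamma(\phi)+\phi(b)-\phi(a),
\]
because the boundary contributions $\phi(b_i)-\phi(a_i)$ on consecutive legs telescope along the shared vertices. Apply this to $\gamma'=\beta * f(\gamma_q)$, a $us$-adapted path from $p_0$ to $f(q)$ obtained by prepending a fixed $us$-path $\beta$ from $p_0$ to $f(p_0)$. This yields
\[
u(f(q))=PCF_\beta(\phi)+PCF_{\gamma_q}(\phi)+\phi(q)-\phi(p_0)=u(q)+\phi(q)+c,
\]
with $c=PCF_\beta(\phi)-\phi(p_0)$ independent of $q$. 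Relabeling $u$ as $-u$ gives $\phi=u-u\circ f -c$, the required form of an almost coboundary.

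The only place where care is needed is the H\"older estimate: one must check that the one-sided PCF sums on a short stable or unstable leg converge geometrically when $\phi$ is merely H\"older, which is standard, but this is also the reason we only get $u\in C^\alpha$ rather than the regularity of $\phi$. All other steps — accessibility, well-definedness from loop-vanishing, and the telescoping identity — are formal consequences of the definitions and require no hypothesis on $f$ beyond being Anosov.
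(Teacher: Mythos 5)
Your proof is correct and rests on the same core mechanism as the paper's: define $u$ by integrating $PCF$ along $us$-adapted paths from a basepoint, get well-definedness from loop-vanishing, and derive the almost-coboundary equation from the $f$-equivariance identity $PCF_{f\circ\gamma}(\phi)=PCF_\gamma(\phi)+\phi(\gamma(1))-\phi(\gamma(0))$ (which you correctly verify telescopes across legs). Where the two proofs diverge is the basepoint. The paper insists the basepoint be fixed by $f$, and since an Anosov diffeomorphism need not have a fixed point, it inserts an extra step: pick a periodic point $f^n(x_0)=x_0$, apply the argument to $f^n$, and then average, $U=\sum_{i=0}^{n-1}u\circ f^i$, to descend back to a coboundary over $f$. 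You instead take an arbitrary basepoint $p_0$ and close the gap between $p_0$ and $f(p_0)$ by prepending a fixed $us$-path $\beta$, so that $u(f(q))=PCF_\beta(\phi)+PCF_{f\circ\gamma_q}(\phi)$ yields $u(f(q))-u(q)=\phi(q)+c$ with $c=PCF_\beta(\phi)-\phi(p_0)$ directly. This is cleaner: it avoids the power trick altogether, and it sidesteps the implicit bookkeeping the paper's periodic-point detour requires (applying the construction to $f^n$ tacitly asks whether the loop-vanishing hypothesis transfers to $f^n$-PCFs, and the first equality $\phi(x_0)-\phi(x)=u(x)-u(f^n(x))$ is stated more tersely than the iterated equivariance identity actually gives). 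Both routes reach the same conclusion; yours does it with one fewer moving part.
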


\begin{proof} Assume that $f$ has a fixed point $x_0$, $f(x_0)=x_0$ and let $c=\phi(x_0)$. Given a point $x\in M$ consider a $us$-adapted path $\gamma$ connecting $x_0$ to $x$ and let
$u(x)=PCF_\gamma(\phi)$. Note that $u(x)$ does not depend on choice of $\gamma$ because a different choice would adjust the value of $u(x)$ by a value of periodic cycle function on a loop, which is zero by our assumption.

By a direct calculation we have
$$
u(x)-u(f(x))=PCF_\gamma(\phi)-PCF_{f\circ \gamma}(\phi)=\phi(x_0)-\phi(x)
$$
It is standard to check H\"older continuity of $u$ by checking that restrictions to stable and unstable leaves are H\"older continuous.

In general, it in not known whether every Anosov diffeomorphism has a fixed point. However, every Anosov diffeomorphism has a periodic point $f^p(x_0)=x_0$. Then the general case can be reduced the case when there exists a fixed point using the following lemmas.~\footnote{We are grateful to the referee for pointing out the subtlety of this reduction}

Given a function $\phi$, let $S_p\phi=\sum_{k=0}^{p-1}\phi\circ f^k$. Also, for clarity, we will include the dynamics in the notation for $PCF$.
\begin{lemma} We have
	$PCF_{\gamma}(\phi,f)=PCF_{\gamma}(S_p\phi,f^p)$ for every su-loop $\gamma$.
\end{lemma}	

\begin{lemma}
	If $S_p\phi$ is cohomologous to a constant over $f^p$ then $\phi$ is cohomologous to a constant over $f$. 
\end{lemma}	

Lemma~2.2 follows from a direct calculation. Indeed, if $\gamma_i$ is, say, a stable leg of the loop $\gamma$ then
\begin{multline*}
PCF_{\gamma_i}(\phi,f)=\sum_{n\ge 0}\phi(f^n(\gamma_i(0)))-\phi(f^n(\gamma_i(1)))\\
=\sum_{j\ge 0}S_p\phi(f^{jp}(\gamma_i(0)))-S_p\phi(f^{jp}(\gamma_i(1)))=PCF_{\gamma_i}(S_p\phi, f^p)
\end{multline*}
	\begin{proof}[Proof of Lemma~2.3]
		We can assume that $p\geq 2$. Let $u$ and $c\in\R$ be such that $S_p\phi=u-u\circ f^p+c$. Define $$w=\frac{\sum_{k=1}^{p-1}S_k\phi+S_pu}{p},$$ then \begin{eqnarray*}
			p(w-w\circ f)&=&\left(\sum_{k=1}^{p-1}S_k\phi-\sum_{k=1}^{p-1}S_k\phi\circ f\right)+S_pu-S_pu\circ f\\
			&=&\left(\sum_{k=1}^{p-1}S_k\phi-S_k\phi\circ f\right)+u-u\circ f^p\\
			&=&\left(\sum_{k=1}^{p-1}\phi-\phi\circ f^k\right)+u-u\circ f^p\\
			&=&p\phi-S_p\phi+u-u\circ f^p=p\phi-c
		\end{eqnarray*}
		hence $\phi$ is cohomologous to $c/p$ over $f$. 
\end{proof}

Now, if $f$ doesn't have a fixed point but a periodic point $f^p(x_0)=x_0$, $p\ge 2$, then from Lemma~2.2 we have that $PCF$ for $f^p$ of $S_p\phi$ vanish. Since $f^p$ has a fixed point $x_0$, we conclude that $S_p\phi$ is a coboundary over $f^p$ as explained at the beginning of the proof. Then, we obtain that $\phi$ is a coboundary over $f$ by applying Lemma~2.3. 
\end{proof}

We will need a version of the above proposition which is concerned with null-homologous $us$-adapted loops, \ie loops whose homology class vanishes in $H_1(M,\Z)$. The next proposition can be easily derived from the abelian Livshits Theorem for Anosov flows given in~\cite[Theorem 3.3]{GRH2}, however, for Anosov diffeomorphisms the proof is more direct and we include it here.

\begin{proposition} Assume that $f\colon M\to M$ is an Anosov diffeomorphism such that $f^k_*n\neq n$ for all non-zero $n\in H_1(M,\Z)$ and all $k\ge 1$. Assume that $\phi$ is a H\"older continuous function such that $PCF_\gamma(\phi)=0$ for every null-homologous $us$-adapted loop $\gamma$. Then $\phi$ is an almost coboundary.
\label{prop_pcf}
\end{proposition}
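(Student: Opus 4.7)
The plan is to upgrade the hypothesis from null-homologous us-adapted loops to all us-adapted loops, after which Proposition~\ref{prop_KK} immediately gives the conclusion. To this end I would define a homomorphism $\rho\colon H_1(M,\Z)\to\R$ by $\rho([\gamma])=PCF_\gamma(\phi)$ on the class of any us-adapted loop $\gamma$. Well-definedness follows from the vanishing hypothesis: if $\gamma_1,\gamma_2$ are us-adapted loops with $[\gamma_1]=[\gamma_2]$, then $\gamma_1\ast\bar\gamma_2$ is a null-homologous us-adapted loop, and since $PCF$ is additive under concatenation and flips sign under orientation reversal, $PCF_{\gamma_1}(\phi)-PCF_{\gamma_2}(\phi)=0$. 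That every class of $H_1(M,\Z)$ contains a us-adapted representative follows from local accessibility (any two nearby points are joined by a us-adapted path through a local product box) applied to a fine subdivision of an arbitrary smooth loop.

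Next I would show that $\rho$ is $f_\ast$-invariant. A direct calculation from the definitions gives, for a single stable (respectively unstable) leg of $\gamma$ from $x$ to $y$,
$$PCF_{f\circ\gamma}(\phi)-PCF_\gamma(\phi)=\phi(y)-\phi(x).$$
Summing over the legs of a us-adapted loop, the boundary terms telescope to zero, so $PCF_{f\circ\gamma}(\phi)=PCF_\gamma(\phi)$. Since $[f\circ\gamma]=f_\ast[\gamma]$, this reads $\rho\circ f_\ast=\rho$, equivalently $\rho$ annihilates the image of $f_\ast-\mathrm{id}$.

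Finally I would invoke the spectral hypothesis. Since $\R$ is torsion free, $\rho$ kills the torsion subgroup $T\subset H_1(M,\Z)$ and descends to $\bar\rho\colon F\to\R$ on the free quotient $F:=H_1(M,\Z)/T\cong\Z^k$. I would check that $\bar f_\ast-\mathrm{id}$ is injective on $F$: if $(\bar f_\ast-\mathrm{id})(\bar n)=0$ then $(f_\ast-\mathrm{id})(n)\in T$ for any lift $n$, so multiplying by the exponent $m$ of $T$ gives $(f_\ast-\mathrm{id})(mn)=0$, and the hypothesis forces $mn=0$, hence $\bar n=0$. An injective endomorphism of $\Z^k$ has image of finite index, so for every $\bar n\in F$ some nonzero integer multiple $N\bar n$ lies in $\mathrm{im}(\bar f_\ast-\mathrm{id})$, yielding $N\bar\rho(\bar n)=0$ and therefore $\bar\rho(\bar n)=0$. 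Thus $\rho\equiv 0$, all PCFs vanish on us-adapted loops, and Proposition~\ref{prop_KK} concludes that $\phi$ is an almost coboundary.

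The main obstacle I expect is the set-up step, namely verifying that every class in $H_1(M,\Z)$ is represented by a us-adapted loop; this requires a careful application of local product structure together with density of stable/unstable leaves, so that short arcs can be rerouted along invariant manifolds without changing the homotopy class. Once $\rho$ is in place as a well-defined homomorphism on $H_1(M,\Z)$, the rest of the argument is essentially formal.
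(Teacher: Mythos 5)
Your proof is correct, and it is closely related to the paper's argument but packaged differently. The paper lifts to the universal abelian cover $\tilde M$, runs the argument of Proposition~\ref{prop_KK} there (where all $us$-adapted loops lift to loops since null-homologous loops correspond to the commutator subgroup), obtains a solution $\tilde u$ on $\tilde M$, and then shows that the ``period homomorphism'' $w(T)=\tilde u(T\tilde x_0)$ vanishes so that $\tilde u$ descends. You instead define the obstruction homomorphism $\rho([\gamma])=PCF_\gamma(\phi)$ directly on $H_1(M,\Z)$, verify $f_*$-invariance by the telescoping computation $PCF_{f\circ\gamma}(\phi)-PCF_\gamma(\phi)=\phi(\gamma(1))-\phi(\gamma(0))$ on each leg, kill $\rho$ using the hypothesis, and then invoke Proposition~\ref{prop_KK} as a black box. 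The two are in fact the same homomorphism---unwinding the paper's definitions shows $w(T)=PCF_\gamma(\phi)$ for any $us$-adapted loop $\gamma$ in the class $T$---but your route avoids the cover entirely and is arguably cleaner. Two small advantages of your write-up: you make explicit that every class in $H_1(M,\Z)$ has a $us$-adapted representative (needed so that $\rho$ is defined on all of $H_1$, and quietly used by the paper when it says ``homologically trivial $us$-adapted loops are precisely those in the commutator subgroup''), and you spell out the torsion/finite-index step: the paper's terse ``$\mathrm{id}-f_*$ has trivial kernel, hence $w\equiv 0$'' relies on precisely the observation that an injective endomorphism of a finite-rank free abelian group has finite-index image and that $\R$ is torsion-free, which you state explicitly. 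The only minor point to tidy is the construction of $\gamma_1*\bar\gamma_2$ when $\gamma_1,\gamma_2$ have different basepoints: insert a connecting $us$-adapted path $\delta$ and use $\gamma_1*\delta*\bar\gamma_2*\bar\delta$, whose $PCF$ still equals $PCF_{\gamma_1}(\phi)-PCF_{\gamma_2}(\phi)$ since the $\delta$ contributions cancel.
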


\begin{proof}
Let $\tilde M$ be the universal abelian cover of $M$, that is, the cover which corresponds to the commutator subgroup $[\pi_1M,\pi_1M]$; its group of Deck transformations is given by $H_1(M,\Z)$. Assume that $x_0$ is a fixed point for $f$ (otherwise we can use a periodic point to show that $\phi$ is an almost coboundary over $f^k$ for some $k$, then the same trick as in the proof Proposition~\ref{prop_KK} yields that $\phi$ is also an almost coboundary over $f$). And let $\tilde x_0$ be a fixed point for a lift $\tilde f\colon\tilde M\to\tilde M$ of $f$. Also let $\tilde\phi$ be the lift of $\phi$.

Note that homologically trivial $us$-adapted loops on $M$ are precisely those loops which correspond to elements in the commutator subgroup of $\pi_1(M)$ and, hence, lift to loops on $\tilde M$. Therefore, the preceding proof can be repeated verbatim on $\tilde M$. Namely, if $c=\tilde\phi(\tilde x_0)$ and $\tilde u(x)=PCF_\gamma(\tilde \phi)$, where $\gamma$ connects $x_0$ to $x$, then we have a solution to the cohomological equation on $\tilde M$:
$$
\tilde \phi=\tilde u-\tilde u\circ\tilde f+c
$$

Let $w\colon H_1(M,\Z)\to\R$ be given by $w(T)=\tilde u(T(\tilde x_0))$. Then, because periodic cycle functionals are invariant under Deck transformations: $PCF_{T\circ \gamma}(\tilde\phi)=PCF_\gamma(\tilde\phi)$, $T\in H_1(M,\Z)$, one can easily verify that $w$ is a homomorphism. And also similarly, $\tilde u\circ T-\tilde u= w(T)$.

Now for any $T\in H_1(M,\Z)$ we have
$$
\tilde\phi\circ T=\tilde u\circ T-\tilde u\circ\tilde f\circ T+c=\tilde u +w(T)-\tilde u\circ\tilde f-w(f_*(T))+c=\tilde\phi+w(T)-w(f_*(T))
$$
Hence, because $\tilde\phi\circ T=\tilde\phi$, we obtain that $w(T)=w(f_*(T))$ or $w((id-f_*)T)=0$ for all $T$. By assumption $id-f_*$ has trivial kernel, hence, we conclude that $w\equiv 0$. This means that $\tilde u$ is also equivariant under the Deck group and, thus, descends to a function $u\colon M\to \R$ and gives a solution to the cohomological equation on $M$: $\phi=u-u\circ f+c$.
\end{proof}

\subsection{Regularity of simple periodic cycle functionals}
\label{sec_regularity}

The simplest $PCF$ is given by a quadruple of points. Here we show that under assumptions of Theorem~\ref{thm_tech} such $PCF$ is $C^1$ along unstable leaves.

Let $a\in W^s(b)$. Then there is a canonical holonomy map $Hol_{a,b}\colon W_{loc}^u(a)\to W_{loc}^u(b)$ which takes $a$ to $b$ and which is given by sliding along stable leaves. If stable and unstable foliations have global product structure (as is the case for Anosov diffeomorphisms on tori), then the holonomy map can be continuously extended to a continuous map $Hol_{a,b}\colon W_{}^u(a)\to W_{}^u(b)$ in a unique way.

Let $\gamma(a,b,x)$ be a $us$-adapted loop with four legs connecting $a$ to $b$, $b$ to $Hol_{a,b}(x)$, to $x$ and then back to $a$. Given a potential $\phi$ define $\rho_{a,b}^\phi\colon W^u(a)\to \R$ via the periodic cycle functional of $\gamma(a,b,x)$
$$
\rho_{a,b}^\phi(x)=PCF_{\gamma(a,b,x)}(\phi)
$$
We will call such PCF a {\it simple PCF.}
\begin{lemma}
\label{lemma1}
 Let $L\colon\T^3\to\T^3$ be an Anosov automorphism with a pair of complex conjugate eigenvalues. Fix a number $\kappa>\frac12$ and assume let $\phi\colon \T^3\to \R$ is $C^{1+\kappa}$. Let $f$ be a sufficiently small perturbation of  the automorphism $L$ such that $\lambda_+<\mu_-^\kappa$. Then $\rho_{a,b}^\phi\colon W^u(a)\to \R$ is $C^1$ regular.
\end{lemma}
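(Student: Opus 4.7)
The plan is to differentiate the four legs of $\rho^\phi_{a,b}(x)=PCF_{\gamma(a,b,x)}(\phi)$ termwise along $W^u(a)$ and verify that the resulting series converges absolutely and uniformly, so that $\rho^\phi_{a,b}$ is $C^1$. Parameterize a segment of $W^u(a)$ by $t\mapsto x(t)$ and set $y(t):=Hol_{a,b}(x(t))\in W^u(b)$. The leg $a\to b$ contributes no $x$-dependence, while on differentiating the two unstable legs the constant endpoints $\phi(f^n(a))$ and $\phi(f^n(b))$ drop out. The formal $t$-derivative of $\rho^\phi_{a,b}$ therefore collapses to
\[
\frac{d}{dt}\rho^\phi_{a,b}(x(t)) \;=\; \sum_{n\in\Z}\frac{d}{dt}\bigl[\phi(f^n(y(t)))-\phi(f^n(x(t)))\bigr].
\]

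For $n<0$ the tangent map $Df^n$ contracts $E^u$ by $\lesssim\lambda_-^{n}$, so each summand is $O(\lambda_-^{n})$ and the tail converges absolutely and uniformly. The real work is the sum over $n\ge 0$, where a naive bound of each derivative is of order $\lambda_+^n$. Here I would exploit the equivariance $f^n\circ Hol_{a,b}=h_n\circ f^n$ with $h_n:=Hol_{f^n(a),f^n(b)}$, rewriting the $n$-th summand as $\phi(h_n(u_n))-\phi(u_n)$ for $u_n=f^n(x(t))$. Setting $w_n=Df^n\dot x(t)$, the $t$-derivative splits as
\[
\bigl[D\phi(h_n(u_n))-D\phi(u_n)\bigr]\cdot Dh_n(w_n) \;+\; D\phi(u_n)\cdot\bigl[Dh_n(w_n)-w_n\bigr].
\]

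The first bracket is controlled via $D\phi\in C^\kappa$ and the stable distance $d(h_n(u_n),u_n)\lesssim\mu_-^{-n}$, giving $\lesssim\mu_-^{-n\kappa}\|Dh_n(w_n)\|\lesssim\mu_-^{-n\kappa}\lambda_+^n$, which is summable precisely by the hypothesis $\lambda_+<\mu_-^\kappa$. The second bracket requires the quantitative estimate $\|Dh_n-I\|\lesssim\mu_-^{-n}$, which I would extract from the $C^{2+\eps}$ regularity of the stable foliation recorded in Section~\ref{sec_foliations_regularity}: since $h_n$ is the stable holonomy between two unstable leaves at stable distance $\lesssim\mu_-^{-n}$, the derivative of $h_n$ must converge to the identity at this rate. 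This yields a bound $\lesssim\mu_-^{-n}\lambda_+^n$, summable because $\lambda_+<\mu_-^\kappa\le\mu_-$. The main obstacle is this last estimate on $Dh_n-I$: it requires translating regularity of the stable distribution $E^s$ into quantitative closeness of nearby stable holonomies to the identity, and one must simultaneously check the uniform bound $\|Dh_n\|\le C$ appearing in the first bracket, both of which reduce to a careful reading of the bunching analysis.
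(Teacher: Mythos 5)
Your argument is correct and essentially reproduces the paper's proof of Lemma~\ref{lemma1}: after discarding the constant and the easy $n<0$ contributions, you compute the formal derivative of the $n\geq 0$ sum, split it by adding and subtracting a cross term (your split $[D\phi(h_n u_n)-D\phi(u_n)]Dh_n w_n + D\phi(u_n)[Dh_n w_n - w_n]$ is equivalent to the paper's $[D_u\phi(u_n)-D_u\phi(h_n u_n)]w_n + D_u\phi(h_n u_n)[w_n-Dh_n w_n]$), and control the two pieces by the $\kappa$-H\"older property of $D\phi$ and the estimate $\|Dh_n - I\|\lesssim\mu_-^{-n}$. The ``main obstacle'' you flag --- that $\|Dh_n - I\|\lesssim\mu_-^{-n}$ should follow from the regularity of $W^s$ --- is exactly what the paper uses: $W^s$ being $C^2$ gives that $D\,Hol_{z,y}$ is uniformly Lipschitz in $y\in W^s_{loc}(z)$ with $D\,Hol_{z,z}=Id$ (and $C^2$ suffices here; you do not need the full $C^{2+\eps}$), which also handles your side remark about the uniform bound on $\|Dh_n\|$.
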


\begin{proof}
Recall that by definition
\begin{multline*}
\rho_{a,b}^\phi(x)=\sum_{n\ge 0} \phi(f^n(b))-\phi(f^n(a))+\sum_{n\ge 0}\phi(f^n(x))-\phi(f^n(Hol_{a,b}(x)))\\
+\sum_{n<0}\phi(f^n(x))-\phi(f^n(a))+\sum_{n<0}\phi(f^n(b))-\phi(f^n(Hol_{a,b}(x)))
\end{multline*}

Note that the first series term is just a constant. The third series term can be easily seen to be $C^1$ along $W^u$ by calculating the formal derivative and observing that the resulting series converge uniformly and hence, by Weierstrass M-test, give a bona fide derivative of the series. The last series term is of the same nature as the third one, but precomposed with the holonomy map. Since $Hol_{a,b}$ is $C^1$, we conclude that the last term is also $C^1$ along $W^u$. Hence, it remains to analyze the second series term.

We denote by $D_uf$ the restriction of derivative $Df$ to $E^u$ and calculate the formal derivative of the second series term:
$$
\sum_{n\ge 0}D_u\phi(f^n(x))D_uf^n(x)-D_u\phi(f^n(Hol_{a,b}(x)))D_uf^n(Hol_{a,b}(x))D\,Hol_{a,b}(x)
$$
We proceed with an estimate using the triangle inequality by splitting the above series into a sum of two series. Note that the points $f^n(Hol_{a,b}(x))$ and $f^n(x)$ are close and we can identify unstable subspaces at these points using the global coordinates on $\T^3$. In this way the compositions of differentials which appear below make sense.
\begin{multline*}
\sum_{n\ge 0}D_u\phi(f^n(x))D_uf^n(x)-D_u\phi(f^n(Hol_{a,b}(x)))D_uf^n(Hol_{a,b}(x))D\,Hol_{a,b}(x)\\
=\sum_{n\ge0}\big(D_u\phi(f^n(x))-D_u\phi(f^n(Hol_{a,b}(x)))\big)D_uf^n(x)\\
+\sum_{n\ge0}D_u\phi(f^n(Hol_{a,b}(x)))\big(D_uf^n(x)-D_uf^n(Hol_{a,b}(x))D\,Hol_{a,b}(x)\big)
\end{multline*}
We will see that both series above converge uniformly. This then implies that $\rho_{a,b}^\phi$ is indeed differentiable with a continuous derivative given by the above series. 

For estimating the first series we use the fact  that $D^u\phi$ is H\"older with exponent $\kappa>1/2$. (Indeed, recall that $E^u$ is $C^1$ and $\phi$ is $C^{1+\kappa}$.)
\begin{multline*}
\|\big(D_u\phi(f^n(x))-D_u\phi(f^n(Hol_{a,b}(x)))\big)D_uf^n(x)\|\\
\le C\, dist(f^n(x), f^n(Hol_{a,b}(x)))^\kappa\lambda_+^n\le C \mu_-^{-n\kappa}\lambda_+^n
\end{multline*}
Hence, because $\mu_-^{-\kappa}\lambda_+<1$, the series converge uniformly.

To handle the second series note that $D_u\phi$ is uniformly bounded and hence, we need to estimate $D_uf^n(x)-D_uf^n(Hol_{a,b}(x))D\, Hol_{a,b}(x)$. To do that notice that
$f^n\circ Hol_{a,b}=Hol_{f^n(a),f^n(b)}\circ f^n$. Hence
\begin{multline*}
\|D_uf^n(x)-D_uf^n(Hol_{a,b}(x))D\, Hol_{a,b}(x)\|\\
=\|D_uf^n(x)-D\, Hol_{f^n(a),f^n(b)}(f^n(x)) D_uf^n(x)\| \\
\le
\|Id-D\, Hol_{f^n(a),f^n(b)}(f^n(x))\|\cdot\|D_u f^n(x)\|\le C \mu_-^{-n}\lambda_+^n
\end{multline*}
where for the bound 
$$\|Id-D\, Hol_{f^n(a),f^n(b)}(f^n(x))\|\le C dist(f^n(x), f^n(Hol_{a,b}(x)))\le C\mu_-^{-n}$$
 we used the fact $W^s$ is a $C^2$ foliation and, hence, $D\, Hol_{z,y}$ is uniformly Lipschitz in $y\in W_{loc}^s(z)$, $z\in \T^3$, and $D\, Hol_{z,z}=Id$.
Therefore, because $\lambda_+<\mu_-$, the second series also converge uniformly.

We remark that to have $C^2$ stable foliation we strongly rely on the fact that $f$ is close to $L$ and, hence, is close to conformal along the unstable subbundle as discussed in detail in Section~2.1.
\end{proof}

\subsection{Relation between the stable holonomy and simple periodic cycle functionals}
\label{sec_holonomy}
Consider a quadruple of points $a$, $b\in W^s(a)$, $x\in W^u(a)$ and $Hol_{a,b}(x)$ as in the preceding section. The Jacobian of $Hol_{a,b}$ can be calculated using the relationship $f^n\circ Hol_{a,b}=Hol_{f^n(a),f^n(b)}\circ f^n$. Indeed, taking Jacobians of both sides yields
$$
J\, Hol_{a,b}(x)=\frac{J^uf^n(x) J\,Hol_{f^n(a),f^n(b)}(f^n(x))}{J^uf^n(Hol_{a,b}(x))}
$$
Recall that $J\, Hol_{f^n(a),f^n(b)}\to 1$ as $n\to +\infty$ because holonomy is uniformly $C^1$ and $Hol_{z,z}=Id$, $z\in\T^3$. Hence, by taking logarithms and passing to the limit as $n\to+\infty$ we obtain the following expression for the Jacobian of the holonomy
$$
\log J \,Hol_{a,b}(x)=\sum_{n\ge 0} \log J^uf(f^n(x))-\log J^uf(f^n(Hol_{a,b}(x)))
$$
This formula  gives the relationship between the Jacobian of the holonomy and the simple periodic cycle functional. Namely, if $\phi=\log J^uf$ then we have
\begin{multline*}
\rho_{a,b}^\phi(x)=\log J\, Hol_{a,b}(x)-\log J\, Hol_{a,b}(a)\\
+\sum_{n<0}\phi(f^n(x))-\phi(f^n(a))+\sum_{n<0}\phi(f^n(b))-\phi(f^n(Hol_{a,b}(x)))
\end{multline*}
\begin{remark} The formula above becomes much nicer if one considers the Jacobian of holonomy relative to the conditional measures of the SRB measure of $f$. Recall that the density of such conditional measure on $W^u(a)$ normalized to be equal to 1 at $a$, is given by
$$
\theta_a(x)=\prod_{n<0}\frac {J^uf(f^n(a))}{J^uf(f^n(x))}
$$
Then the Jacobian of holonomy relative to the SRB conditional measures on $W^u(a)$ and $W^u(b)$ is given by
$$
J^{SRB} Hol_{a,b}(x)=J \,Hol_{a,b}(x)\frac{\theta_b(Hol_{a,b}(x))}{\theta_a(x)}
$$
Taking logariths and using the formula for $\rho_{a,b}^\phi$ we have
$$
\log J^{SRB} Hol_{a,b}(x)=\rho_{a,b}^\phi(x)+\log J^{SRB} Hol_{a,b}(a)
$$
Hence, up to an additive constant simple PCF is the same as logarithmic Jacobian of the holonomy relative to the conditionals of the  SRB measure.
\end{remark}
Note that for Lemma~\ref{lemma1} to apply when $\phi=\log J^uf$, we must have that $\phi$ is  $C^{1+\kappa}$, which means that $f$ has to be $C^{2+\kappa}$ regular. However we will also need to have $C^1$ regularity of $\rho_{a,b}^\phi$ when $f$ is merely $C^2$. The above formula for $\rho_{a,b}^\phi$ in terms of Jacobian of holonomy allows to do that.
\begin{lemma}
\label{lemma2}
 Let $L\colon\T^3\to\T^3$ be an Anosov automorphism with a pair of complex conjugate eigenvalues. Let $f$ be a $C^2$ diffeomorphism which is a sufficiently $C^1$ small perturbation of the automorphism $L$ and let $\phi=\log J^uf$. Then $\rho_{a,b}^\phi\colon W^u(a)\to \R$ is $C^1$ regular.
\end{lemma}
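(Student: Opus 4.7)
The plan is to avoid imitating the proof of Lemma~\ref{lemma1} directly (which we cannot, because $\phi=\log J^uf$ is only $C^1$ when $f$ is merely $C^2$) and instead to exploit the identity for $\rho_{a,b}^\phi$ displayed in the remark preceding the statement. That identity rewrites
$$
\rho_{a,b}^\phi(x)=\bigl[\log J\,Hol_{a,b}(x)-\log J\,Hol_{a,b}(a)\bigr]+S_1(x)+S_2(x),
$$
where
$$
S_1(x)=\sum_{n<0}\phi(f^n(x))-\phi(f^n(a)),\qquad S_2(x)=\sum_{n<0}\phi(f^n(b))-\phi(f^n(Hol_{a,b}(x))).
$$
The constant $\log J\,Hol_{a,b}(a)$ drops out on differentiation, so it suffices to show that each of the three non-constant pieces is $C^1$ along $W^u(a)$.

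For the holonomy term I will invoke Section~\ref{sec_foliations_regularity}: in a sufficiently small $C^1$ neighborhood of an Anosov automorphism $L$ with complex conjugate unstable eigenvalues, the stable bunching parameter $b^s(f)$ is close to $3$, so $W^s$ is $C^{2+\eps}$ and the holonomy $Hol_{a,b}\colon W^u(a)\to W^u(b)$ is $C^{2+\eps}$. Its Jacobian $J\,Hol_{a,b}$ is then $C^{1+\eps}$ and bounded away from zero, whence $\log J\,Hol_{a,b}$ is $C^1$ along $W^u(a)$.

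For the backward series $S_1$ and $S_2$ I will run the formal-derivative / Weierstrass M-test argument, exactly as in the proof of Lemma~\ref{lemma1}, but now the summation over $n<0$ makes the estimate essentially automatic. Because $f$ is $C^2$ and $E^u$ is $C^{b^u(f)}$ with $b^u(f)>1$ for small perturbations of $L$, the potential $\phi=\log\det(Df|_{E^u})$ is $C^1$ and $D_u\phi$ is bounded. The formal derivative of $S_1$ is
$$
\sum_{n<0}D_u\phi(f^n(x))\,D_uf^n(x),
$$
and for $n<0$ we have $\|D_uf^n(x)\|\le C\lambda_-^n$, which is geometrically summable since $\lambda_->1$. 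Hence the derivative series converges uniformly in $x$, so $S_1$ is $C^1$ along $W^u(a)$. The series $S_2$ is handled by the same estimate after one additional chain-rule factor $D\,Hol_{a,b}(x)$, which is continuous and bounded since $Hol_{a,b}$ is $C^{2+\eps}$.

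The genuine obstacle, and the reason Lemma~\ref{lemma1} cannot be applied as-is, is that the Hölder estimate on $D_u\phi$ used there to dominate the \emph{forward} series involving $\lambda_+^n$ is simply unavailable when $\phi=\log J^uf$ and $f$ is only $C^2$. The identity from the remark bypasses this obstacle precisely: the two forward series are absorbed into $\log J\,Hol_{a,b}$, whose regularity comes for free from the strong smoothness of $W^s$ in this bunched setting, while the remaining backward series are tame because $D_uf^n$ contracts for $n<0$.
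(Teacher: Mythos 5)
Your proof is correct and follows essentially the same route as the paper's: use the identity from the remark preceding the lemma to decompose $\rho_{a,b}^\phi$ into the holonomy--Jacobian term, a constant, and two backward series; the holonomy term is $C^1$ from regularity of the stable foliation, and the backward series are $C^1$ by formal differentiation and exponential (geometric) convergence since $\|D_uf^n\|\le C\lambda_-^{\,n}\to 0$ as $n\to-\infty$.

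One small overclaim worth flagging: you write that the stable holonomy is $C^{2+\eps}$ and hence $J\,Hol_{a,b}$ is $C^{1+\eps}$. When $f$ is merely $C^2$, the regularity of $W^s$ is capped at $\min(b^s(f),2)=2$ regardless of how large the bunching exponent is, so $Hol_{a,b}$ is only $C^2$ and $J\,Hol_{a,b}$ is only $C^1$; the extra H\"older gain requires $f\in C^{2+\kappa}$. This does not affect the argument, since $C^1$ regularity of $\log J\,Hol_{a,b}$ is all that is needed, and it is in fact the reason the lemma is stated with the $C^2$ hypothesis rather than borrowing Lemma~\ref{lemma1} directly, as you correctly observe.
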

\begin{proof} The preceding formula expresses $\rho_{a,b}^\phi$ as a sum of four terms. Recall that according to the discussion in Section~\ref{sec_foliations_regularity} the map $Hol_{a,b}$ is $C^2$ (and this is why we need $f$ to be at least $C^2$). Hence the first term $J\, Hol_{a,b}$ is $C^1$ regular. The second term is just a constant. Then, $C^1$ regularity of the third and, similarly, the last series term, are easy to see by observing that $\phi$ is $C^1$, differentiating the series formally with respect to $x$ and observing exponential convergence of the resulting series.
\end{proof}

\subsection{Non-stationary linearization for expanding foliations} Let $f\colon M\to M$ be a $C^r$, $r\ge 2$, diffeomorphism which leaves invariant a continuous foliation $W$ with uniformly $C^r$ leaves. Assume that $W$ is an {\it expanding foliation}, that is 
$\|Df(v)\|>\|v\|$, for all non-zero $v\in E$ where $E\subset TM$ is the distribution tangent to $W$. The following proposition on non-stationary linearization is a special case of the normal form theory developed by Guysinsky and Katok~\cite{GK} and further refined by Kalinin and Sadovskaya~\cite{Sad, KS2}.
\begin{proposition}
\label{prop_normal_forms}
	Let $f$ be a $C^r$, $r\ge 2$, diffeomorphism and let $W$ be an expanding foliation as described above, $E=TW$. Assume that there exist $\nu\in[0,1]$ and $\gamma\in(0,1)$ such that
	$$
	\|(Df^n|_E)^{-1}\|^{1+\nu}\cdot \|Df^n|_E\|\le C\gamma^n
	$$
	for all $n\ge1$.
	Then for all $x\in M$ there exists $\cH_x:E(x)\to W^u(x)$ such that
	\begin{enumerate}
		\item $\cH_x$ is a $C^r$ diffeomorphism for all $x\in M$;
		\item $\cH_x(0)=x$;
		\item $D_0\cH_x=id$;
		\item $\cH_{fx}\circ D_xf=f\circ \cH_x$;
		\item $D\cH_x$ is Lipschitz along $W$;
		\item \label{unique} such family $\cH_x$, $x\in M$, is unique among linearizations satisfying the above properties; moreover, uniqueness still holds among linearizations which do not necessarily obey item 5 above, but with $\nu$-H\"older dependence of $D\cH_x$ along $W$;
		\item\label{aff} if $y\in W(x)$ then $\cH_y^{-1}\circ \cH_x:E(x)\to E(y)$ is affine;
		\item the map $x\to \cH_x$ from $M$ to $Imm^r(E(x), M)$ is H\"older, in particular, the map $\hat \cH:E\to M$, given by $\hat H(x,v)=H_x(v)$ is continuous;
 	\end{enumerate}
\end{proposition}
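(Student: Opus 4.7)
The plan is to follow the non-stationary Sternberg linearization scheme of Guysinsky--Katok, refined by Kalinin--Sadovskaya. Working in leafwise exponential charts that identify a neighborhood of $y\in M$ in $W(y)$ with a neighborhood of $0\in E(y)$, the restriction $f|_{W(y)}$ becomes a map of the form $v\mapsto D_yf\cdot v + N_y(v)$ with $N_y$ vanishing to second order at $0$. I would construct $\cH_x$ as the $n\to\infty$ limit of iterates $\cH_x^{(n)} := (f^n|_W)^{-1}\circ D_xf^n$ along the forward $x$-orbit, where $(f^n|_W)^{-1}$ denotes the leafwise inverse branch taking $f^n x$ back to $x$. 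Each iterate is $C^r$, fixes $0\in E(x)$ with derivative $\mathrm{id}$, and a Taylor analysis of the successive difference $\cH_x^{(n+1)}-\cH_x^{(n)}$ shows that, on a small fixed ball, it is controlled by a product of the form $\|(Df^n|_E)^{-1}\|^{1+\nu}\|Df^n|_E\|$ times the $C^0$ size of the higher-order term $N$; the bunching hypothesis then makes the sequence geometrically Cauchy in $C^r$, so the limit $\cH_x$ exists and has the claimed regularity. Properties (1)--(4) fall directly out of this construction, since the conjugacy equation $\cH_{fx}\circ D_xf = f\circ \cH_x$ is preserved in passing to the limit.

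For uniqueness (6), suppose $\widetilde\cH_x$ is a second linearization with $\nu$-Hölder dependence of $D\widetilde\cH_y$ on $y$ along $W$. Then $\Psi_x := \widetilde\cH_x^{-1}\circ\cH_x$ satisfies $\Psi_{fx}\circ D_xf = D_xf\circ\Psi_x$, fixes $0$, and has $D_0\Psi_x=\mathrm{id}$. Iterating,
$$
\Psi_x(v) - v = (D_xf^n)^{-1}\bigl(\Psi_{f^nx}(D_xf^n v) - D_xf^n v\bigr),
$$
and the $\nu$-Hölder control of $D\Psi_y$ transfers to a pointwise bound $|\Psi_y(w)-w|\le C|w|^{1+\nu}$ near $0$, so the right-hand side is of order $\|(Df^n|_E)^{-1}\|\|Df^n|_E\|^{1+\nu}|v|^{1+\nu}$ and vanishes by the bunching. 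Hence $\Psi_x\equiv\mathrm{id}$. Affinity along leaves (7) is then a corollary: for $y\in W(x)$, the composition $\cH_y^{-1}\circ\cH_x$ intertwines the derivative cocycles based at $x$ and $y$, and the unique affine map with matching basepoint and derivative data also intertwines them, so the uniqueness statement applied in the appropriate guise forces the two to coincide.

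Properties (5) and (8) I would obtain by re-running the iterative construction in a Banach space of families $(\cH_x)_x$ equipped with a seminorm measuring leafwise Lipschitz (respectively transverse Hölder) dependence of $D\cH_x$ on $x$. The bunching condition makes the natural graph transform a contraction in this norm, and its unique fixed point is the family constructed above; continuity of the total map $\hat\cH:E\to M$ then follows from compactness of small balls in the fibers combined with the family Hölder estimate.

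The hard part will be the uniqueness step (6): converting the $\nu$-Hölder regularity of $D\cH_y$ along $W$ (a statement about the $y$-variable) into a pointwise $|v|^{1+\nu}$ bound on $\Psi_y(v)-v$ in the $v$-variable that matches the bunching exponent $1+\nu$ requires a careful coupling of the two kinds of regularity. Balancing these exponents — and handling the borderline case of integer $r$, where $\nu$ cannot be freely enlarged — is precisely the technical heart of the Kalinin--Sadovskaya refinement of the original Guysinsky--Katok construction.
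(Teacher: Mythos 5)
The paper does not actually prove Proposition~\ref{prop_normal_forms}: it states it as a special case of the normal form theory of Guysinsky--Katok~\cite{GK} and Kalinin--Sadovskaya~\cite{Sad, KS2}, and cites those sources. So there is no in-paper proof to match; you are supplying an argument the paper omits. That said, your sketch has a concrete error in the direction of the iteration.

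Because $W$ is an \emph{expanding} foliation, iterating forward along the orbit (as you do both in the construction $\cH_x^{(n)}:=(f^n|_W)^{-1}\circ D_xf^n$ and in the uniqueness step) sends $D_xf^nv$ out of every fixed neighborhood of the origin where your Taylor estimates hold. Concretely, in the uniqueness step you arrive at a bound of order $\|(Df^n|_E)^{-1}\|\,\|Df^n|_E\|^{1+\nu}|v|^{1+\nu}$, but this does not match the hypothesis $\|(Df^n|_E)^{-1}\|^{1+\nu}\|Df^n|_E\|\le C\gamma^n$; writing $\lambda_-^{n}\lesssim\|Df^n(v)\|\lesssim\lambda_+^{n}$ on $E$, your quantity is $\sim(\lambda_+^{1+\nu}/\lambda_-)^n$, which \emph{diverges} since $\lambda_+\ge\lambda_->1$. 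The same exponent mismatch afflicts the Cauchy estimate for $\cH_x^{(n)}$, and worse, the domain on which $\cH_x^{(n)}$ is defined shrinks to $\{0\}$ as $n\to\infty$. The correct move is to iterate \emph{backward} along the orbit (equivalently, forward for $f^{-1}$, for which $W$ is contracting): use
\[
\cH_x \;=\; f^n\circ\cH_{f^{-n}x}\circ\bigl(D_{f^{-n}x}f^n\bigr)^{-1}
\quad\text{and}\quad
\Psi_x \;=\; D_{f^{-n}x}f^n\circ\Psi_{f^{-n}x}\circ\bigl(D_{f^{-n}x}f^n\bigr)^{-1}.
\]
Then $w_n:=(D_{f^{-n}x}f^n)^{-1}v$ satisfies $|w_n|\le\|(Df^n|_E)^{-1}\|\,|v|\to 0$, the local bounds $|\Psi_{f^{-n}x}(w_n)-w_n|\le C|w_n|^{1+\nu}$ legitimately apply, and one gets
\[
|\Psi_x(v)-v|\;\le\;\|Df^n|_E\|\cdot C\,\|(Df^n|_E)^{-1}\|^{1+\nu}|v|^{1+\nu}\;\le\;C'\gamma^n|v|^{1+\nu}\;\longrightarrow\;0,
\]
which is exactly the bunching combination. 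With this reversal, the rest of your scaffolding --- properties (1)--(4) from the conjugacy equation, (7) as a corollary of uniqueness, (5) and (8) via a graph-transform contraction in a suitable Banach space of families --- is the standard route.
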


Such family $\{\cH_x, x\in M\}$ is called {\it non-stationary linearization/normal form or affine structure } along $W$.

\section{Proofs of results in dimension 3}
For all the proofs in this section we will assume that $L$ has one real eigenvalue of absolute value $<1$ and a pair of complex conjugate eigenvalues of absolute value $>1$. If the real eigenvalue has absolute value $>1$ then one can consider inverses and conclude the same results.
\subsection{Theorem~\ref{thm_tech} implies Theorem~\ref{thm_main} with a caveat} 
\label{sec_follows}
The caveat is that we need to make an additional assumption that $f_i$ are at least $C^{2+\kappa}$ regular. Under this assumption we explain that Theorem~\ref{thm_tech} applies in the setting of Theorem~\ref{thm_main}.

Fix a Riemannian metric on $\T^3$ and let $\phi_i=\log J^uf_i$, $i=1,2$. Because unstable Jacobian periodic data match we have $(f_1,\phi_1)\sim(f_2,\phi_2)$. In order to apply Theorem~\ref{thm_tech} we also need to check that $\phi_i\in C^{1+\kappa}(\T^3)$ with $\kappa>\frac12$. Note that this is not immediate because the unstable subbundle is merely $C^{\frac32-\eps}$. However, because the stable foliation $W^s_i$ is $C^2$ we can pick $C^2$- coordinate charts on $\T^3$ such that $W^s_i$ is ``horizontal'' with respect to these charts. Then the differential $Df_i$ has upper triangular form in these charts. Taking the determinant of $Df_i$ yields the following relation
$$
\phi_i=\log J^uf_i=\log Jf_i-\log J^sf_i
$$
We have $\log Jf_i\in C^{1+\kappa}(\T^3)$ because we have assumed that $f_i$ are $C^{2+\kappa}$. And $\log J^sf_i$ is also $C^{1+\kappa}$ because the stable subbundle $E^s_i$ is $C^2$. Therefore $\phi_i$ are indeed $C^{1+\kappa}$.

Applying Theorem~\ref{thm_tech} we obtain that $\log J^u f_i$ is cohomologous to a constant or the conjugacy $h$ is uniformly $C^r$ along the unstable foliation. In the former case we conclude that the equilibrium state for $-\log J^u f_i$ coincides with the equilibrium state for the constant, which precisely means that the SRB measure coincides with the measure of maximal entropy for $f_i$.

In the case when $h$ is uniformly $C^r$ along the unstable foliation we need to refer to classical arguments~\cite{dlL} to conclude that $h$ is $C^{r_*}$. Indeed, from matching of stable Jacobian periodic data de la Llave concludes that $h$ sends the SRB measure for $f_1^{-1}$ to the SRB measure for $f_2^{-1}$. Same is true for conditional measures of these SRB measures along the stable leaves. Further, de la Llave argues that these conditional measures have $C^{r-1}$ densities. And because the stable foliation is one dimensional we can conclude that $h$ is uniformly $C^{r}$ along stable foliation by integrating. Finally, given that $h$ is uniformly $C^r$ along both the stable and the unstable foliation, one employs the Journ\'e's Lemma~\cite{J}, to conclude that $h$ is a $C^{r_*}$ diffeomorphism. 

\subsection{Proofs of Corollaries}

Here we explain how Corollaries~\ref{cor1} and~\ref{cor2} follow from Theorem~\ref{thm_tech}.

\noindent {\it Proof of Corollary~\ref{cor1}.} Let $\phi_i=\log J^s f_i$, $i=1,2$. Then, from regularity of $E^s$ we have $\phi_i\in C^2(\T^3)$ and by the matching assumption $(f_1,\phi_1)\sim (f_2,\phi_2)$. Thus Theorem~\ref{thm_tech} applies and we have that either $h$ is $C^r$ along the unstable foliation, and we further get that $h$ is $C^{r_*}$ as explained in Section~\ref{sec_follows}, or $\phi_1$ is cohomologous to a constant. In the latter case, the equilibrium state for $\phi_1=\log J^sf_1=-\log J^u(f_1^{-1})$, which is the SRB measure for $f_1^{-1}$ coincides with the equilibrium state for a constant which is the MME.

\noindent {\it Proof of Corollary~\ref{cor2}.} Here we can apply Theorem~\ref{thm_tech} to full Jacobians $\phi_i=\log Jf_i$. Diffeomorphism $f_1$ being dissipative implies that $\phi_1$ is not cohomologous to a constant. Hence, Theorem~\ref{thm_tech} implies that $h$ is $C^r$ along unstable foliation. Now we have matching of full Jacobian and matching of the unstable Jacobian. Hence, the stable Jacobian also matches and we can coclude that $h$ is $C^{r_*}$ in the same way as before.

\subsection{Outline of the proof of Theorem~\ref{thm_tech} (and Theorem~\ref{thm_main})}\label{sec_33}
By the assumption $(f_1,\phi_1)\sim (f_2,\phi_2)$ we have that $\phi_1$ is cohomologous to $\phi_2\circ h$ over $f_1$. Because periodic cycle functionals vanish on coboundaries we have that
$$
PCF_\gamma(\phi_1)=PCF_\gamma(\phi_2\circ h)
$$
for every $us$-adapted loop $\gamma$ for $f_1$. Now we focus on simple PCFs given by four legs $\rho_{a,b}^\phi\colon W^u(a)\to\R$ as defined in the Section~\ref{sec_regularity}. The above equality of PFCs can be written in the following way
$$
\rho_{a,b}^{\phi_1}=\rho_{a,b}^{\phi_2\circ h}=\rho_{h(a), h(b)}^{\phi_2}\circ h|_{W^u_{f_1}(a)} 
$$
We call such relation {\it matching of functions} $\rho_{a,b}^{\phi_1}$ and $\rho_{h(a), h(b)}^{\phi_2}$. This relation holds for all $a\in\T^3$ and $b\in W^s(a)$.

Now the proof splits into two cases. The first case is when all simple PCFs $\rho_{a,b}^{\phi_1}$ are constant. In this case, we have, in fact, that $\rho_{a,b}^{\phi_1}\equiv 0$ because $\rho_{a,b}^{\phi_1}(a)=0$. We will deduce that such vanishing implies that all PCFs on null-homotopic $us$-adapted loops vanish. Then Proposition~\ref{prop_pcf} allows us to conclude that $\phi_1$ is an almost coboundary, which completes the proof in this case.

The second case in when $\rho_{a,b}^{\phi_1}$ is non-constant for some $a$ and $b$. 
Denote by $p$ a fixed point of $f_1$ such that $Df_1$ has a pair of (non-real) complex conjugate eigenvalues. Such a point exists in proximity of $0\in\T^3$ because we have assumed that $f_1$ is sufficiently close to $L$ in $C^1$ topology and $L$ has a pair of complex conjugate eigenvalues.
Recall that by Lemma~\ref{lemma1} $\rho_{a,b}^{\phi_1}$ is $C^1$. Using minimality of the stable foliation, we can adjust the locations of the points $a$ and $b$ on the stable manifold such that $a\in W^u_{f_1}(p)$ and $\rho_{a,b}^{\phi_1}$ is has non-zero differential at $p$. Note that dynamics produces another matching relation as follows
$$
\rho_{a,b}^{\phi_1}\circ f_1|_{W^u_{f_1}(p)}=\rho_{h(a), h(b)}^{\phi_2}\circ h|_{W^u_{f_1}(p)} \circ f_1|_{W^u_{f_1}(p)}=(\rho_{h(a), h(b)}^{\phi_2}\circ f_2|_{W^u_{f_2}(h(p))})\circ h|_{W^u_{f_1}(p)} 
$$
Hence we have another matching pair $(\rho_{a,b}^{\phi_1}\circ f_1|_{W^u_{f_1}(p)}, \rho_{h(a), h(b)}^{\phi_2}\circ f_2|_{W^u_{f_2}(h(p))})$.
The differential $D(\rho_{a, b}^{\phi_1}\circ f_1|_{W^u_{f_1}(p)})$ is also non-zero at $p$ and has a kernel which is linearly independent from the kernel of $D\rho_{a, b}^{\phi_1}$. This is because $Df_1(p)$ is an ``expanding rotation'' and doesn't have any real eigenvalues. Hence we have two independent matching relations on the neighborhood of of $p$ in $W^u(p)$, which makes it possible to apply the Inverse Function Theorem to conclude that $h|_{W^u_{f_1}(p)}$ is a $C^1$ diffeomorphism on a small neighborhood of $p$. Then we can use $C^1$ regularity of the stable holonomy to spread this regularity everywhere and conclude that $h$ is uniformly $C^1$ along the unstable foliation $W^u_{f_1}$. The last step in the proof is to use uniqueness of the normal form for the unstable foliation to bootstrap regularity along $W^u_{f_1}$ from $C^1$ to $C^r$. Then concluding that $h$ is a $C^{r_*}$ diffeomorphism was already explained at the end of Section~\ref{sec_follows}.

In the following three sections we fill in the details for the above outline.

\begin{remark} The proof of Theorem~\ref{thm_main} is exactly the same working with the specific potentials $\phi_i=\log J^u f_i$, $i=1,2$, to conclude that $h$ is $C^r$ along the unstable foliation. The only difference is that due to possible lack of regularity of $f_i$ one has to invoke Lemma~\ref{lemma2} instead of Lemma~\ref{lemma1}. 
\end{remark}

\subsection{Case I: vanishing of simple PCFs} \label{sec_case1} We begin the proof of Theorem~\ref{thm_tech} by considering the case when simple PCFs $\rho^{\phi_1}_{a,b}\equiv 0$ for all $a\in\T^3$ and $b\in W^s(a)$. We will prove, using induction, that $PCF_\gamma(\phi_1)=0$ for all null-homologous $us$-adapted loops $\gamma$, which handles this case by applying Proposition~\ref{prop_pcf} and concluding that $\phi_1$ is an almost coboundary.

Because $\gamma$ null-homologous, it lifts to a loop $\tilde\gamma$ on the universal cover $\R^3$ and we have $PCF_{\tilde\gamma}(\tilde \phi_1)=PCF_\gamma(\phi_1)$, where $\tilde \phi_1$ is the lift of $\phi_1$ and the PCF on the universal cover is defined in the same way using the lifted dynamics. The advantage of working on the universal cover is that the lifted foliations $\tilde W^s$ and $\tilde W^u$ have global product structure because they are close to the linear foliations for $L$. In particular, the space of unstable leaves is homeomorphic to $\R$ and, hence, is linearly ordered. We will denote by $\EuScript O(x)$ the $\R$-coordinate of $\tilde W^u(x)$, $x\in\R^3$ (and similarly for $\R$-coordinates of paths which are entirely contained in unstable leaves).

We will write $\tilde \gamma=\gamma_1*\gamma_2*\ldots *\gamma_{2k}$ and we can assume that the legs $\gamma_i$ are contained in unstable leaves for even $i$ and contained in stable leaves for odd $i$; indeed, if there are two consecutive legs in the same stable (or unstable) leaf we can just combine them into a single leg. We will run induction on $k$. If $k=2$ then the corresponding PCF is simple and vanishes by the assumption.

\begin{figure}
  \includegraphics[width=\linewidth]{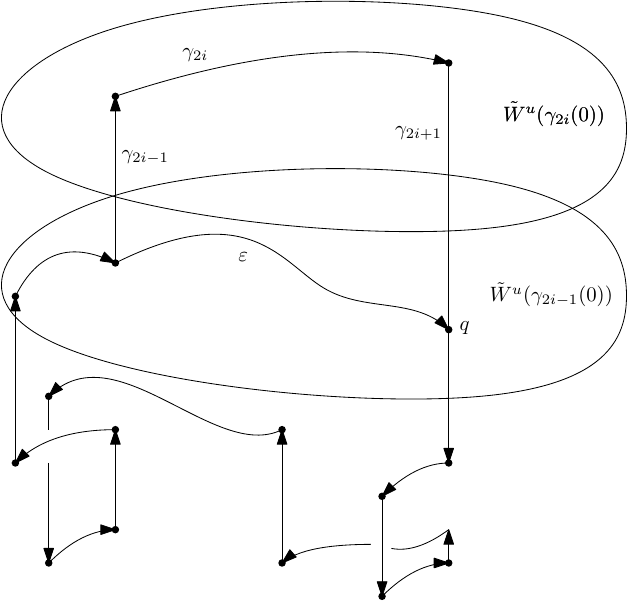}
  \caption{Induction.}
\end{figure}

Now assume vanishing for all $\tilde \gamma$ with $2k-2$ legs or less. Pick a maximal leg $\gamma_{2i}$, that is, a leg such that $\EuScript O(\gamma_{2i})\ge \EuScript O(\gamma_{2j})$ for all $j=1,\ldots k$. We can cyclically relabel the legs if needed so that $2i\neq 2k$. By maximality we have $\EuScript O(\gamma_{2i-1}(0))<\EuScript O(\gamma_{2i})$ and $\EuScript O(\gamma_{2i+1}(1))<\EuScript O(\gamma_{2i})$. For concreteness, also assume that  $\EuScript O(\gamma_{2i-1}(0))\ge \EuScript O(\gamma_{2i+1}(1))$ (the other case is symmetric). Then, by global product structure the leaf $\tilde W^u(\gamma_{2i-1}(0))$ intersects the leaf $\tilde W^s(\gamma_{2i+1}(1))$ at a unique point $q$ with $q\in \gamma_{2i+1}$. We use point $q$ to subdivide $\gamma_{2i+1}$ into two legs $\gamma_{2i+1}=\delta_1*\delta_2$. Also consider a path $\eps\colon[0,1]\to \tilde W^u(\gamma_{2i-1}(0))$ which connects $\gamma_{2i-1}(0)$ to $q$ and let $\bar\eps$ be the same path with reversed orientation which connects $q$ to $\gamma_{2i-1}(0)$. By adding the legs $\eps$ and $\bar\eps$ we can ``decompose'' $\tilde \gamma$ into two $us$-adapted loops
$$
\alpha=\gamma_{2i-1}*\gamma_{2i}*\delta_1*\bar\eps
$$
and
$$
\beta=\gamma_1*\ldots *(\gamma_{2i-2}*\eps)*\delta_2*\gamma_{2i+2}*\ldots \gamma_{2k}
$$
Note that $\alpha$ has only 4 legs and $\beta$ has $2k-2$ legs (or $2k-4$ if $\delta_2$ is a point). Hence by the induction hypothesis $PCF_\alpha(\phi_1)=PCF_\beta(\phi_1)=0$. Also recall that from the definition of periodic cycle functionals we have $PCF_{\bar\eps}(\phi_1)=-PCF_{\eps}(\phi_1)$. It follows that
$$
PCF_{\tilde\gamma}(\phi_1)=PCF_{\tilde\gamma}(\phi_1)+PCF_\eps(\phi_1)+PCF_{\bar\eps}(\phi_1)=PCF_\alpha(\phi_1)+PCF_\beta(\phi_1)=0
$$
\begin{remark} For convenience we made use of global product structure and one-dimensionality of $W^s$. However, this is not essential. A more tedious argument, which relies on local product structure only, can show that for any null-homologous $\gamma$ the corresponding $PCF$ can be written us a sum of simple $PCFs$ corresponding to loops of small diameter and, hence, vanishes.
\end{remark}

\subsection{Case II: non-constant simple PCF} \label{sec_35}Recall that the simple PCFs are $C^1$ by Lemma~\ref{lemma1}. We assume now that there exists $a\in\T^3$, $b\in W^s_{f_1}(a)$ and $x_0\in W^u_{f_1}(a)$ such that $D\rho^{\phi_1}_{a,b}(x_0)\neq 0$. Then for any $x$ in a sufficiently small neighborhood $B$ of $x_0$ we also have $D\rho^{\phi_1}_{a,b}(x)\neq 0$. 

Let $p$ be a fixed point of $f_1$ such that $Df_1|_{E^u_{f_1}(p)}$ has (non-real) complex conjugate eigenvalues. Such point exists for all $f_1$ which are sufficiently close to $L$ in $C^1$ topology.
By minimality property of $W_{f_1}^s$ we have $\T^3=\cup_{x\in B} W^s_{f_1}(x)$. Hence, we can pick $x\in B$ such that $p\in W^s_{f_1}(x)$. The local stable holonomy $Hol_{x,p}\colon W^u_{loc}(x)\to W^u_{loc}(p)$ uniquely extends to a global holonomy $Hol_{x,p}\colon W^u_{f_1}(x)\to W^u_{f_1}(p)$ and we let $c=Hol_{x,p}(a)$. The point configuration is illustrated on Fifure 2. The following relation can be verified directly from the definition of PCFs 
$$
\rho_{c,b}^{\phi_1}-\rho_{c,a}^{\phi_1}=\rho_{a,b}^{\phi_1}\circ(Hol_{x,p})^{-1}
$$
Recall that $D\rho^{\phi_1}_{a,b}(x)\neq 0$. Because $Hol_{x,p}$ is a $C^1$ diffeomorphism and $p=Hol_{x,p}(x)$, the above relation implies that either $D\rho_{c,b}^{\phi_1}(p)\neq 0$ or $D\rho_{c,a}^{\phi_1}(p)\neq 0$ (or both). These two cases are fully analogous and, for concreteness, we assume that $D\rho_{c,b}^{\phi_1}(p)\neq 0$.

\begin{figure}
  \includegraphics[width=\linewidth]{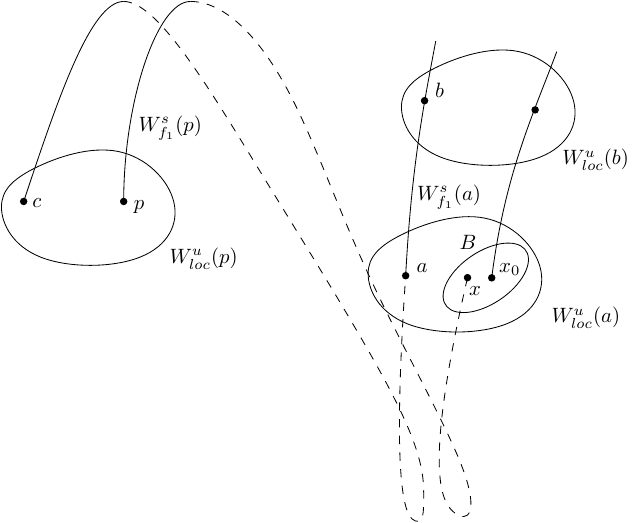}
  \caption{Point configuration.}
\end{figure}

Note that the leaf ${W^u_{f_1}(p)}$ is fixed by $f_1$. Using the conjugacy relation we now have matching pairs 
$$\rho^{\phi_1}_{c,b}=\rho^{\phi_2}_{h(c),h(b)}\circ h|_{W^u_{f_1}(p)} $$
 and 
$$\rho^{\phi_1}_{c,b}\circ f_1|_{W^u_{f_1}(p)} =(\rho^{\phi_2}_{h(c),h(b)}\circ f_2|_{W^u_{f_2}(h(p))})\circ h|_{W^u_{f_1}(p)} $$
Further the differentials $D\rho^{\phi_1}_{c,b}|_{E^u_{f_1}(p)}$ and $D(\rho^{\phi_1}_{c,b}\circ f_1|_{W^u_{f_1}(p)})|_{E^u_{f_1}(p)}$ are linearly independent because $Df_1|_{E^u_{f_1}(p)}$ does not have real eigenvalues. It follows that the map
$$
\euP_{c,b}^{\phi_1}=(\rho^{\phi_1}_{c,b}, \rho^{\phi_1}_{c,b}\circ f_1|_{W^u_{f_1}(p)})
$$
has a full-rank differential at $p$ and, hence, is a $C^1$ diffeomorphism when restricted to a sufficiently small neighborhood $\cU$ of $p$. We also define 
$$
\euP_{h(c),h(b)}^{\phi_2}=(\rho^{\phi_2}_{h(c),h(b)}, \rho^{\phi_2}_{h(c),h(b)}\circ f_2|_{W^u_{f_2}(h(p))} )$$
 and by the matching relations we have 
$$
\euP_{c,b}^{\phi_1}=\euP_{h(c),h(b)}^{\phi_2}\circ h
$$
Taking the inverse, we obtain the following formula for the restriction 
$$
h^{-1}|_{h(\cU)}=(\euP_{c,b}^{\phi_1})^{-1} \circ \euP_{h(c),h(b)}^{\phi_2}
$$
By using a symmetric argument (and passing to an even smaller neighborhood $\cU$ if needed) we also have that $h|_\cU$ is $C^1$ and, hence, $h|_\cU$ is a $C^1$ diffeomorphism.

Now let $q\in W^s_{f_1}(p)$ and let $\cU_q$ be the image of $\cU$ under the stable holonomy $Hol_{p,q}\colon W^u_{loc}(p)\to W^u_{loc}(q)$. Then, because $h$ preserves the stable foliation we have that 
$$
h|_{\cU_q}=Hol_{p,q}\circ h|_\cU\circ (Hol_{p,q})^{-1}
$$
and, hence, is also a  $C^1$ diffeomorphism. 

Finally, we will use minimality of the stable foliation to conclude that $h$ is uniformly $C^1$ along $W_{f_1}^u$. Indeed, neighborhoods $\cU_q$, $q\in W^s_{f_1}(p)$, sweep out the whole torus, so $h$ is $C^1$ along $W_{f_1}^u$. To see uniformity, note that minimality actually implies that
$$
\T^3=\bigcup_{q\in W^s(p,R)}\cU_q
$$
where $W^s(p,R)$ is an segment of radius $R$ in $W^s(p)$ relative to the intrinsic metric. Because the family of holonomies $Hol_{p,q}$, $q\in W^s(p,R)$, is uniformly $C^1$ we, indeed, can conclude that the same is true for $h|_{\cU_q}$, $q\in W^s(p,R)$, yielding uniform $C^1$ smoothness along $W^u_{f_1}$ on the whole $\T^3$.

\subsection{A bootstrap argument} 
\label{sec_37}
Denote by $\cH_x^i$, the affine structure for $f_i$ along the unstable foliation $W^u_{f_i}$, $i=1,2$. The idea for bootstrap is to use uniqueness of the normal form. Indeed, since we have that $h$ is uniformly $C^1$ along unstable foliation we can consider non-stationary linearization for $f_1$ given by ${\cH^1_x}'=(h|_{W^u_{f_1}(x)})^{-1}\circ \cH^2_x\circ Dh|_{E^u_{f_1}(x)}$. Then, if the normal form for $W_{f_1}^u$ were unique, we would have ${\cH^1_x}'=\cH^1_x$ and conclude that $h|_{W^u_{f_1}(x)}=\cH^2_x\circ Dh|_{E^u_{f_1}(x)}\circ (\cH^1_x)^{-1}$ is $C^r$. This however, does not work so easily because the uniqueness guaranteed by item~\ref{unique} of Proposition~\ref{prop_normal_forms} requires $D{\cH^1_x}'$ to be H\"older with respect to $x$ along the unstable leaves. We do not have such regularity because $h$ is merely $C^1$ along unstable leaves. However, using this idea we can still establish smoothness along the leaf with conformal fixed point and then finish using denseness of this leaf.

We will begin by bootsrtapping $h$ from $C^1$ to $C^r$ along the  leaf with conformal dynamics. We state the following proposition in somewhat more general context for the sake of future reference, in particular, in the next section.

\begin{proposition}
\label{prop_leaf}
Let $f_i\colon M\to M$ be $C^r$, $r\ge 2$, diffeomorphisms which admit 2-dimensional expanding foliations $W_i$ and satisfy assumptions of Proposition~\ref{prop_normal_forms}, $i=1,2$, $E_i=TW_i$. Assume that $f_1$ and $f_2$ are conjugate, $h\circ f_1=f_2\circ h$, and $h(W_1)=W_2$. Assume that $p$ is a fixed point for $f_1$ such that $Df_1|_{E_1(p)}$ does not have real eigenvalues. Assume that $h|_{W_1(p)}$ is differentiable at $p$. Then $h|_{W_1(p)}$ is $C^r$.
\end{proposition}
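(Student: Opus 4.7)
The plan is to use the non-stationary normal forms from Proposition~\ref{prop_normal_forms} to pull $h|_{W_1(p)}$ back to a map between the unstable tangent spaces, and then exploit the absence of real eigenvalues of $Df_1|_{E_1(p)}$ to force that map to be linear.

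First, I set $A \defin Df_1|_{E_1(p)}$ and $B \defin Df_2|_{E_2(h(p))}$, and define
\[
H \defin (\cH^2_{h(p)})^{-1} \circ h|_{W_1(p)} \circ \cH^1_p \colon E_1(p) \to E_2(h(p)).
\]
Property~(4) of Proposition~\ref{prop_normal_forms}, applied at the fixed points $p$ and $h(p)$, combined with the conjugacy $h \circ f_1 = f_2 \circ h$, yields the intertwining relation $H \circ A = B \circ H$. Since the normal forms are $C^r$ diffeomorphisms with $D_0 \cH^i = id$ and $h|_{W_1(p)}$ is differentiable at $p$, the map $H$ is a homeomorphism of two-dimensional real vector spaces, differentiable at $0$, with $H(0)=0$ and $DH(0) \circ A = B \circ DH(0)$.

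Next, I observe that $DH(0)$ is invertible: $\ker DH(0)$ is $A$-invariant by the commutation, and $A$ admits no proper real invariant subspace because it has non-real complex conjugate eigenvalues; hence $\ker DH(0)$ is either trivial or everything. The degenerate case $DH(0)=0$ is ruled out in the applications (in Section~\ref{sec_35} and the analogous later arguments, $h$ is already known to be a local $C^1$ diffeomorphism along unstable leaves before invoking this proposition). Once $DH(0)$ is invertible, $A$ and $B$ become linearly conjugate, so $B$ has the same non-real spectrum as $A$; both then act as rotation-scalings of a common modulus $|\lambda|$, and in particular $\|B^n\|\cdot\|A^{-n}\|$ stays uniformly bounded in $n\ge 0$.

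The key computation is then to iterate the intertwining as $H(v) = B^n \circ H \circ A^{-n}(v)$ for every $n\ge 1$. Since $A$ is an expansion, $A^{-n}v \to 0$, so writing the Taylor expansion $H(w) = DH(0)w + R(w)$ with $R(w)=o(|w|)$ yields
\[
H(v) = DH(0)v + B^n R(A^{-n}v).
\]
For any $\eps>0$ and $n$ large enough, $|R(A^{-n}v)| \le \eps\,|A^{-n}v|$, whence $|B^n R(A^{-n}v)| \le \eps\,\|B^n\|\cdot\|A^{-n}\|\cdot|v| \le C\eps\,|v|$. Sending $n\to\infty$ with $\eps\to 0$ gives $H(v)=DH(0)v$, so $H$ coincides with the linear map $DH(0)$. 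Therefore
\[
h|_{W_1(p)} = \cH^2_{h(p)}\circ DH(0)\circ (\cH^1_p)^{-1}
\]
is a composition of $C^r$ maps, hence $C^r$. The hard part is the control of the remainder $B^n R(A^{-n}v)$ across the iteration: a priori $\|B^n\|\cdot\|A^{-n}\|$ could grow exponentially, and it is precisely the non-real eigenvalue hypothesis, via invertibility of $DH(0)$ and the resulting linear conjugacy of $A$ to $B$, that keeps this product bounded and makes the telescoping argument close.
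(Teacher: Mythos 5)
Your argument is correct and reaches the same final identity $h|_{W_1(p)}=\cH^2_{h(p)}\circ(\text{linear})\circ(\cH^1_p)^{-1}$, but it takes a genuinely different technical route from the paper. The paper pre-composes with $Dh|_{E_1(p)}$ to obtain a self-map $H\colon\C\to\C$ normalized so that $D_0H=id$, commuting with $z\mapsto\lambda z$, and then proves the linearity lemma by extracting a subsequence $n_i$ with $\lambda^{n_i}/|\lambda|^{n_i}\to\sigma$. The point of that trick is that the rescaling factor $|\lambda|^{-n_i}$ is a positive scalar, so $H(|\lambda|^{-n_i}z)/|\lambda|^{-n_i}\to D_0H(z)$ follows instantly from the definition of the derivative, while the other side of the identity is handled by mere continuity of $H$; no product-of-norms estimate is needed at all. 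You instead keep $H\colon E_1(p)\to E_2(h(p))$ between two different tangent spaces and run the iteration $H=B^n\circ H\circ A^{-n}$ with an explicit Taylor remainder, which does require the uniform bound $\|B^n\|\cdot\|A^{-n}\|\le C$. That bound is not free: it relies on $A$ and $B$ being real-similar, which in turn relies on $DH(0)$ being invertible. You handle this correctly by observing that $\ker DH(0)$ is $A$-invariant, so it is trivial or everything, and the degenerate case is excluded in the applications (where $h$ is already a local $C^1$ diffeomorphism along unstable leaves). This is a point worth flagging: the proposition as stated only assumes differentiability of $h|_{W_1(p)}$ at $p$, and the paper's proof also needs $Dh(p)$ to be invertible (its map $H$ involves $(h|_{W_1(p)})^{-1}$, whose differentiability at $h(p)$ and the claim $D_0H=id$ both force $Dh(p)$ invertible), though the paper does not comment on this. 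Both proofs are valid under the same implicit hypothesis; the paper's subsequence device is slicker in that it avoids the norm-product estimate, while your version is more direct and makes the role of the linear conjugacy between $A$ and $B$ explicit.
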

\begin{proof}
Denote by $\cH_x^i$, $x\in M$, the affine structures for $(f_i, W_i)$ given by Proposition~\ref{prop_normal_forms}. To prove the proposition we will show that $h|_{W_1(p)}=\cH^2_{h(p)}\circ Dh|_{E_1(p)}\circ (\cH^1_p)^{-1}$, which is clearly $C^r$. For that we need the following elementary lemma.
%

\begin{lemma}
	Let $H:\C\to\C$ be a continuous map of the complex plane $\C$. Assume that $D_0H$ exists. Assume that there is  $\lambda\in\C$, be such that $|\lambda|>1$ and $H(\lambda z)=\lambda H(z)$ for every $z\in\C$. Then $H(z)=D_0H(z)$ for every $z\in\C$.
\end{lemma}
\begin{proof}
	First note that $H(0)=H(\lambda 0)=\lambda H(0)$, hence, $H(0)=0$. Let $n_i\to+\infty$ such that ${\lambda^{n_i}}{|\lambda|^{-n_i}}\to\sigma$, $|\sigma|=1$. Then, since $H$ is differentiable at $0$, for every $z\in\C$ we have
	$$\frac{H(|\lambda|^{-n_i}z)}{|\lambda|^{-n_i}}\to D_0H(z)$$
	as $n_i\to+\infty$.
	 On the other hand,
	 $$
	 \frac{H(|\lambda|^{-n_i}z)}{|\lambda|^{-n_i}}=\frac{\lambda^{-n_i}}{|\lambda|^{-n_i}}H\left(\lambda^{n_i}{|\lambda|^{-n_i}z}{}\right)\to \sigma^{-1} H(\sigma z), \,\,n_i\to+\infty
	 $$
Therefore $\sigma^{-1} H(\sigma z)=D_0H(z)$ for every $z\in\C$ and hence $H(z)=\sigma D_0H(\sigma^{-1} z)$ for every $z\in\C$ which implies that $H(z)=D_0H(z)$.
\end{proof}

Because eigenvalues of $Df_1|_{E_1(p)}$ are complex, we can identify $E_1(p)$ with $\C$ so that $Df_1\colon E_1(p)\to E_1(p)$ becomes $z\mapsto \lambda z$. Note that $|\lambda|>1$. Let $H\colon E_1(p)\to E_1(p)$ be given by
$H=(\cH^1_p)^{-1}\circ(h|_{W_1(p)})^{-1}\circ\cH^2_{h(p)}\circ Dh|_{E_1(p)}$. We check the main assumption of the above lemma
\begin{multline*}
H\circ(z\mapsto \lambda z)=(\cH^1_p)^{-1}\circ(h|_{W_1(p)})^{-1}\circ\cH^2_{h(p)}\circ (Dh|_{E_1(p)}\circ Df_1|_{E_1(p)})\\
=(\cH^1_p)^{-1}\circ(h|_{W_1(p)})^{-1}\circ(\cH^2_{h(p)}\circ Df_2|_{E_2(p)})\circ Dh|_{E_1(p)}\\=(\cH^1_p)^{-1}\circ((h|_{W_1(p)})^{-1}\circ f_2|_{W_2(p)})\circ \cH^2_{h(p)}\circ Dh|_{E_1(p)}\\
=((\cH^1_p)^{-1}\circ f_1|_{W_1(p)})\circ (h|_{W_1(p)})^{-1}\circ \cH^2_{h(p)}\circ Dh|_{E_1(p)}\\
=Df_1|_{E_1(p)}\circ (\cH^1_p)^{-1}\circ(h|_{W_1(p)})^{-1}\circ\cH^2_{h(p)}\circ Dh|_{E_1(p)}=(z\mapsto \lambda z)\circ H
\end{multline*}
Also recall that $D\cH^i_x=id_{E_i(x)}$ and hence $D_0H=id_\C$. Therefore, by the lemma $H=id_\C$, which precisely means that $h|_{W_1(p)}=\cH^2_{h(p)}\circ Dh|_{E_1(p)}\circ (\cH^1_p)^{-1}$.
\end{proof}

By applying Proposition~\ref{prop_leaf} in our setting for unstable foliations, we have $h|_{W_{f_1}^u(p)}=\cH^2_{h(p)}\circ Dh|_{E^u_{f_1}(p)}\circ (\cH^1_p)^{-1}$. We would like to show a similar formula $h|_{W_{f_1}^u(x)}=\cH^2_{h(x)}\circ C(x)\circ (\cH^1_x)^{-1}$ for all $x\in\T^3$. To do that we can exploit denseness of $W^u_{f_1}(p)$. Given a point $x_0\in\T^3$ let $x_n\in W^u_{f_1}(p)$ be a sequence of points converging to $x_0$ as $n\to\infty$. For $x\in W_{f_1}^u(p)$ we have
\begin{multline*}
h|_{W^u_{f_1}(x)}=\cH^2_{h(p)}\circ Dh|_{E^u_{f_1}(p)}\circ (\cH^1_p)^{-1}\\
=\cH^2_{h(x)}\circ((\cH^2_{h(x)})^{-1}\circ\cH^2_{h(p)}\circ Dh|_{E^u_{f_1}(p)}\circ (\cH^1_p)^{-1}\circ \cH^1_x)\circ (\cH^1_x)^{-1}
\end{multline*}
and letting 
$$
C(x)=(\cH^2_{h(x)})^{-1}\circ\cH^2_{h(p)}\circ Dh|_{E^u_{f_1}(p)}\circ (\cH^1_p)^{-1}\circ \cH^1_x
$$
 we obtain $h|_{W_{f_1}^u(x)}=\cH^2_{h(x)}\circ C(x)\circ (\cH^1_x)^{-1}$ for $x\in W_{f_1}^u(p)$. By item~\ref{aff} of Proposition~\ref{prop_normal_forms} maps $C(x)\colon E^u_{f_1}(x)\to E^u_{f_2}(h(x))$ are affine. One can easily check that $C(x)(0)=0$ and hence $C(x)$, $x\in\T^3$, are, in fact, linear maps. 

Now we would like to take a limit as $x_n\to x_0$ of 
$$
h|_{W_{f_1}^u(x_n)}=\cH^2_{h(x_n)}\circ C(x_n)\circ (\cH^1_{x_n})^{-1}
$$
We left-hand-side converges to $h|_{W_{f_1}^u(x_0)}$, however, in order to be able to take the limit of the right-hand-side we also need the norm and conorm of $C(x_n)$ to be uniformly bounded. If that is the case, then from uniqueness of the limit we have that $C(x_n)$ converges to an invertible linear map $C(x_0)$, which has the same bounds on the norm and conorm, and 
$$
h|_{W_{f_1}^u(x_0)}=\cH^2_{h(x_0)}\circ C(x_0)\circ (\cH^1_{x_0})^{-1}
$$
Then from continuity property (item~8) of Proposition~\ref{prop_normal_forms} and uniform bounds on $C(x)$, $x\in\T^3$, we can conclude that $h$ is uniformly $C^r$ along unstable foliation.

Thus, it remains to prove the following lemma.
\begin{lemma}\label{agh}
	There exists a constant $C>0$ such that for every $x\in W_{f_1}^u(p)$, the linear maps $C(x):E^u_{f_1}(x)\to E^u_{f_2}(h(x))$ defined above satisfy the following bounds
	 $$\|C(x)\|\leq C,\,\, \|(C(x))^{-1}\|\leq C$$
\end{lemma}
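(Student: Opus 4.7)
The plan is to identify $C(x)$ with the linear map $Dh|_{E^u_{f_1}(x)}$ itself. Once this identification is established, the two bounds are immediate: Section~\ref{sec_35} constructs $h|_{\cU}$ as a $C^1$ diffeomorphism on a neighborhood $\cU$ of $p$ and then spreads this regularity via stable holonomies to all of $\T^3$, yielding a uniform upper bound on both $\|Dh|_{E^u_{f_1}(x)}\|$ and $\|(Dh|_{E^u_{f_1}(x)})^{-1}\|$ for $x\in\T^3$, in particular for $x\in W^u_{f_1}(p)$.

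The first step would be to verify that $C(x)$, which is a priori only affine (being the composition of the fixed linear map $Dh|_{E^u_{f_1}(p)}$ with the two affine maps $(\cH^1_p)^{-1}\circ\cH^1_x$ and $(\cH^2_{h(x)})^{-1}\circ\cH^2_{h(p)}$ furnished by item~\ref{aff} of Proposition~\ref{prop_normal_forms}), actually satisfies $C(x)(0)=0$. This would follow by evaluating the identity
$$h|_{W^u_{f_1}(x)}=\cH^2_{h(x)}\circ C(x)\circ(\cH^1_x)^{-1}$$
at the point $x$ itself: since $(\cH^1_x)^{-1}(x)=0$, $\cH^2_{h(x)}(0)=h(x)$, and $h|_{W^u_{f_1}(x)}(x)=h(x)$, one forces $C(x)(0)=0$, so $C(x)$ is linear. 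The second step would be to differentiate the same identity at the base point $y=x$. By item~3 of Proposition~\ref{prop_normal_forms} one has $D_0\cH^i_y=\mathrm{id}$ for every $y$, so both $D(\cH^1_x)^{-1}|_x$ and $D\cH^2_{h(x)}|_0$ are the identity, and since $C(x)$ is now known to be linear it coincides with its own derivative. The chain rule therefore collapses to
$$Dh|_{E^u_{f_1}(x)}=C(x),$$
viewed as linear maps $E^u_{f_1}(x)\to E^u_{f_2}(h(x))$, and the lemma follows by taking $C$ to be any uniform upper bound on $\|Dh|_{E^u_{f_1}(\cdot)}\|$ and $\|(Dh|_{E^u_{f_1}(\cdot)})^{-1}\|$ on $\T^3$.

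There is essentially no obstacle here. The only observation required is that, although the definition of $C(x)$ telescopes through the distant base point $p$ via two long affine identifications between the normal forms, at the level of derivatives at $x$ every occurrence of $\cH^1_p$, $\cH^2_{h(p)}$, and the fixed linear map $Dh|_{E^u_{f_1}(p)}$ drops out because $D_0\cH^i_y=\mathrm{id}$, and the entire expression collapses to the local differential $Dh|_{E^u_{f_1}(x)}$. In particular, one need not iterate the dynamical equivariance
$$C(f_1(x))=Df_2|_{E^u_{f_2}(h(x))}\circ C(x)\circ\bigl(Df_1|_{E^u_{f_1}(x)}\bigr)^{-1},$$
which might otherwise look like the natural route to a bound given that $p$ is fixed and $Df_1|_{E^u_{f_1}(p)}$ is conformal.
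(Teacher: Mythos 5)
Your proposal is correct, and the identification $C(x)=Dh|_{E^u_{f_1}(x)}$ is a genuinely different (and arguably more transparent) route than the paper's. Let me verify the key step once more: evaluating $h|_{W^u_{f_1}(x)}=\cH^2_{h(x)}\circ C(x)\circ(\cH^1_x)^{-1}$ at $x$ forces $C(x)(0)=0$, so $C(x)$ is linear; differentiating at $x$, one has $D_x(\cH^1_x)^{-1}=\mathrm{id}$ (from $\cH^1_x(0)=x$ and $D_0\cH^1_x=\mathrm{id}$), $D_0C(x)=C(x)$ since $C(x)$ is linear, and $D_0\cH^2_{h(x)}=\mathrm{id}$, so the chain rule collapses to $Dh|_{E^u_{f_1}(x)}=C(x)$. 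The remark that the distant base point $p$ and the affine identifications $(\cH^1_p)^{-1}\circ\cH^1_x$ and $(\cH^2_{h(x)})^{-1}\circ\cH^2_{h(p)}$ all telescope away at the level of derivatives is exactly the right response to the paper's opening worry that $\|(\cH^1_p)^{-1}\circ\cH^1_x\|$ could explode: those factors cancel. The uniform bounds on $\|Dh|_{E^u_{f_1}(\cdot)}\|$ and $\|(Dh|_{E^u_{f_1}(\cdot)})^{-1}\|$ are indeed available from Section~\ref{sec_35}, since $h|_\cU$ is shown to be a $C^1$ diffeomorphism and the family $h|_{\cU_q}=Hol_{p,q}\circ h|_\cU\circ(Hol_{p,q})^{-1}$ is uniformly $C^1$ together with its inverse over the compact family $q\in W^s(p,R)$.

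The paper's own proof is a compactness/contradiction argument: rewrite $C(x)=(\cH^2_{h(x)})^{-1}\circ h|_{W^u(x)}\circ\cH^1_x$, suppose $\|C(x_n)\|\to\infty$ with unit vectors $C(x_n)v_n$, pass to a convergent subsequence using compactness of $\T^3$ and continuity of the normal forms (item~8 of Proposition~\ref{prop_normal_forms}), and derive a contradiction with continuity of $h$; the bound on $\|C(x)^{-1}\|$ then comes by interchanging $f_1$ and $f_2$. That argument uses only continuity of $h$ (plus the identity for $C(x)$, which of course was obtained via the $C^1$ theory), whereas yours uses the uniform $C^1$ bounds on $h$ along $W^u_{f_1}$ directly. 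Since those bounds are already in hand at this stage of the paper, your approach is shorter and pinpoints exactly why the telescoping through $p$ is harmless; the paper's version is softer in that it leans on less quantitative information. Both are valid.
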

\begin{proof}
Note that this lemma is not very obvious because the norm of $(\cH^1_p)^{-1}\circ \cH^1_x$ could explode as $x$ goes to infinity inside the leaf $W^u_{f_1}(p)$.

	We shall bound uniformly $\|C(x)\|$, $x\in\T^3$. The bound on the norm of  $C(x)^{-1}$ follows from the same argument by interchanging the roles of $f_1$ and $f_2$ and working with $h^{-1}$.
	
	Notice that $C(x)=(\cH^2_{h(x)})^{-1}\circ h|_{W^u(x)}\circ \cH^{1}_{x}.$ If there is no uniform bound on $\|C(x)\|$, then there exist sequences $x_n\in W^u(p_1)$ and $v_n\in E^u_1(x_n)$ with $\|v_n\|\to 0$ and such that $\|C(x_n)v_n\|=1$. Taking a subsequence if necessary, we have $x_{\infty}=\lim x_n\in\T^3$ and $w_{\infty}=\lim C(x_n) v_n\in E^u_2(h(x_{\infty}))$, $\|w_{\infty}\|=1$. Let $z_n=\cH^1_{x_n}(v_n),$ then $z_n\to \cH^1_{x_{\infty}}(0)=x_\infty$. So we obtain that $h(z_n)\to h(x_{\infty})$ and 
	$$
	\cH^2_{h(x_n)}(C(x_n)v_n)\to \cH^2_{h(x_{\infty})}(w_{\infty})\neq \cH^2_{h(x_{\infty})}(0)=h(x_{\infty})
	$$
	 On the other hand, 
	 $$\cH^2_{h(x_n)}(C(x_n)v_n)=h\left( \cH^{1}_{x_n}(v_n)\right)=h(z_n)\to h(x_{\infty})$$ 
	 yielding a contradiction.
\end{proof}

\begin{remark} Once Proposition~\ref{prop_leaf} is established one could argue in a more ad hoc, but quicker way, using holonomies along the stable foliation, that 
$$
h|_{W^u(x)}=Hol_{h(x_{p}),h(x)}\circ h|_{W^u(p)}\circ Hol_{x, x_{p}}
$$
where $x_{p}\in W^s(x)\cap W^u(p)$.
Hence $h|_{W^u(x)}$ is uniformly $C^2$ for every $x\in\T^3$ due to $C^2$ regularity of holonomies. After that one can use uniqueness of normal forms as outlined at the beginning of this section to further bootstrap to $C^r$. 
\end{remark}

\section{The General Matching Theorem}

\label{sec_6}
We state a general matching theorem, which is a higher dimensional version of Theorem~\ref{thm_tech}. Recall the notation for hyperbolicity rates and the defintions of the stable and unstable bunching parameters from Section~\ref{sec_foliations_regularity}.
\begin{theorem}
\label{thm_tech2}
Let $f_i\colon M_i\to M_i$, $i=1,2$, be topologically conjugate $C^r$, $r\ge 2$, transitive Anosov diffeomorphisms, $h\circ f_1=f_2\circ h$. Fix a number 
$\kappa\in(0,1]$. Assuming that both $f_i$ satisfy the following condition on the hyperbolicity rates and bunching parameters
$$
\mu_-^{-\min\{\kappa, \kappa b^u(f_i), b^s(f_i)-1\}}\lambda_+<1
$$
Then there exist $C^1$ regular, $Df_i$-invariant distributions $E_i$, such that
\begin{enumerate}
\item distributions $E_i$ integrate to $f_i$-invariant foliations $W_i$;
\item we have $\dim E_i\in [0,\ldots \dim E_{f_i}^u ]$, $E_i\subset E_{f_i}^u$ and $W_i$ subfoliates $W_{f_i}^u$;
\item the distribution $E_{f_i}^s\oplus E_i$ integrates to an $f_i$-invariant $C^1$ regular foliation which is subfoliated by both $W_{f_i}^s$ and $W_i$;
\item conjugacy $h$ maps $W_1$ to $W_2$;
\item the restrictions of $h$ to the unstable leaves are uniformly $C^1$ transversely to $W_1$; 
\item for any $C^{1+\kappa}$ functions $\phi_i\colon M\to\R$ 
such that $(f_1,\phi_1)\sim (f_2,\phi_2)$ the corresponding simple PCF $\rho^{\phi_i}_{a,b}\colon W_{f_i, loc}^u(a)\to\R$ vanish on $W_{i,loc}(a)\subset W_{f_i, loc}^u(a)$ for all $a\in M_i$ and $b\in W^s_{f_i}(a)$.
\end{enumerate}
\end{theorem}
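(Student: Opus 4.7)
The plan is to mimic and generalize the dichotomy of Sections~\ref{sec_case1}--\ref{sec_35}: in dimension 3 the proof branched into ``all simple PCFs vanishing'' versus ``some simple PCF has nonzero differential,'' and here we interpolate by letting $E_i$ record precisely the directions in which \emph{every} matching PCF is infinitesimally flat. Concretely, for $x\in M_1$ I would set
\[
E_1(x) \;:=\; \bigcap \; \ker D\bigl(\rho_{a,b}^{\phi_1}\big|_{W^u_{f_1}(a)}\bigr)(x) \;\subset\; E^u_{f_1}(x),
\]
where the intersection runs over all $a\in M_1$ with $x\in W^u_{f_1}(a)$, all $b\in W^s_{f_1}(a)$, and all $C^{1+\kappa}$ potentials $\phi_1,\phi_2$ with $(f_1,\phi_1)\sim(f_2,\phi_2)$; define $E_2$ on $M_2$ by the same recipe. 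The bunching hypothesis $\mu_-^{-\kappa}\lambda_+<1$ together with the $C^{b^s(f_i)}$-regularity of stable holonomies will guarantee, via refinements of Lemmas~\ref{lemma1}--\ref{lemma2}, that each $\rho_{a,b}^{\phi_1}$ is $C^1$ along $W^u$, so the kernels make sense.

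Next I would verify the basic structural properties. $Df_1$-invariance is automatic: $\rho_{a,b}^{\phi_1}=\rho_{f_1 a,f_1 b}^{\phi_1}\circ f_1$ (the underlying loop is merely $f_1$-shifted and $PCF$ is $f_1$-invariant on loops), and differentiating gives $Df_1(E_1(x))=E_1(f_1 x)$. The dimension of $E_1$ is upper semicontinuous (its annihilator is the span of a continuous family of $1$-forms, whose rank is lower semicontinuous); combined with $f_1$-invariance, the additional invariance of $E_1$ under stable holonomies (see the next paragraph), and the denseness of unstable leaves, this forces $\dim E_1$ to be constant on $M_1$. Upgrading this continuous distribution to $C^1$ is the main technical step, and is what the three-way bunching inequality $\mu_-^{-\min\{\kappa,\kappa b^u(f_i),b^s(f_i)-1\}}\lambda_+<1$ is tailored to buy: the factor $b^u(f_i)$ furnishes H\"older regularity of $E^u_{f_i}$, while $b^s(f_i)-1$ gives Lipschitz regularity of the stable-holonomy Jacobian (as in Section~\ref{sec_holonomy}); applying the Weierstrass $M$-test to the formally differentiated series expression for $D\rho_{a,b}^{\phi_1}$ then yields Lipschitz control of $D\rho_{a,b}^{\phi_1}$ along $W^u$, and picking finitely many matching pairs whose differentials locally span $E_1(p)^\perp$ upgrades $E_1$ to a $C^1$ distribution.

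Integrability is then immediate because $E_1^\perp$ is locally spanned by the exact $1$-forms $d\rho_{a,b}^{\phi_1}|_{W^u}$, so the local leaves of $W_1$ are connected components of the common level sets of finitely many simple PCFs, which are $C^1$ submanifolds of dimension $\dim E_1$ by the implicit function theorem. For the joint distribution $E^s_{f_1}\oplus E_1$, a direct PCF identity (expressing $\rho_{a,y}^{\phi_1}\circ Hol_{x,y}$ in terms of $\rho_{a,x}^{\phi_1}$ and auxiliary simple PCFs) shows that for $y\in W^s_{f_1}(x)$ the stable holonomy $Hol_{x,y}$ carries the $W_1$-leaf through $x$ onto the $W_1$-leaf through $y$; combined with $C^{b^s(f_i)}$-regularity of $W^s_{f_1}$ this promotes $E^s_{f_1}\oplus E_1$ to a $C^1$ foliation subfoliated by both $W^s_{f_1}$ and $W_1$, and also propagates the constant dimension of $E_1$ globally.

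To conclude I would handle $h$ and the vanishing. The matching identity $\rho_{a,b}^{\phi_1}=\rho_{h(a),h(b)}^{\phi_2}\circ h$ implies $h$ sends common level sets of $M_1$-PCFs to common level sets of $M_2$-PCFs, whence $h(W_1)=W_2$ (with $\dim E_1=\dim E_2$ following from the symmetric argument). For transverse $C^1$-regularity of $h|_{W^u_{f_1}}$, locally at $p$ I would choose matching pairs so that $\euP^{(1)}:=(\rho_{a_j,b_j}^{\phi_1^{(j)}})_j\colon W^u_{f_1,loc}(p)\to\R^{\dim E^u_{f_1}-\dim E_1}$ is a $C^1$ submersion whose fibers are the local $W_1$-leaves; then matching gives $\euP^{(1)}=\euP^{(2)}\circ h$, and the inverse function theorem exhibits $h$ as $C^1$ transversely to $W_1$, with uniform bounds obtained by transporting along minimal $W^s_{f_1}$-holonomies exactly as at the end of Section~\ref{sec_35}. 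The vanishing $\rho_{a,b}^{\phi_1}|_{W_{1,loc}(a)}\equiv 0$ is then immediate: $D\rho_{a,b}^{\phi_1}$ kills $TW_1=E_1$, so $\rho_{a,b}^{\phi_1}$ is locally constant on $W_{1,loc}(a)$, and $\rho_{a,b}^{\phi_1}(a)=0$ with $a\in W_{1,loc}(a)$ forces the constant to be zero. The principal obstacle throughout is the upgrade of $E_1$ from merely continuous to $C^1$, which is precisely the content the three-way bunching condition is crafted to provide.
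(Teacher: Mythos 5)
The core idea is the same — build $E_i$ as a common kernel of differentials of matching functions on unstable plaques — but the proposal makes a genuinely different and narrower choice of that collection. The paper defines $E_i(x)$ using the abstract space $V^k_x$ of \emph{all} local $C^k$ matching pairs $(\phi_1,\phi_2)$ with $\phi_1=\phi_2\circ h$ on a plaque of $W^u_{f_1}$; you restrict to the differentials of simple PCFs $\rho_{a,b}^{\phi}$ of matching global potentials. The paper's choice pays off in that all the closure properties — under precomposition with $f_i$, with stable holonomies, and under restriction to smaller plaques — are automatic because those are $C^k$ changes of variables of $C^k$ functions, and property 6 then needs a separate step (Lemma~\ref{lemma1bis}, to get the PCFs into $V^1_a$). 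Your choice makes property 6 tautological but shifts the burden to showing the PCF collection is itself closed under $f_1$-shift (which is fine, since $\rho_{a,b}^\phi=\rho_{f_1a,f_1b}^\phi\circ f_1$) and under stable-holonomy precomposition; the identity $\rho_{a,b}^\phi\circ Hol_{x,y}=\rho_{a',b}^\phi-\rho_{a',a}^\phi$ with $a'=Hol_{x,y}^{-1}(a)$ does hold by additivity of PCFs along stable segments, but it needs to be stated and checked, whereas the paper's $V^k_x$ sidesteps it. The a priori distributions may even differ ($E_i^{\text{paper}}\subseteq E_i^{\text{yours}}$); this does not affect correctness since the theorem only asserts existence. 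Two smaller points: your claim that the three-way bunching buys \emph{Lipschitz} control of $D\rho_{a,b}^{\phi}$ in order to upgrade $E_1$ to $C^1$ is an overreach — Lemma~\ref{lemma1bis} yields only $C^1$ regularity of the simple PCFs, so the construction produces $E_i$ with $C^1$ foliation charts (hence $C^1$ intrinsic holonomies for $W_i$), which is what the later sections actually use; the bunching hypothesis is consumed by the series estimates in Lemma~\ref{lemma1bis}, not by a Lipschitz bound on $D\rho$. And constancy of $\dim E_i$ should be attributed to stable-holonomy invariance of $E_i$ together with \emph{minimality of the stable foliation} $W^s_{f_i}$ (so the set where the minimum is attained is open, $W^s$-saturated, and its closed $W^s$-saturated complement must be empty), rather than to ``denseness of unstable leaves'' as written.
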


\begin{remark} The condition on the rates is used to establish property~6.
Notice that it implies that $\mu_->\lambda_+$ and $b^s(f_i)>1$. Also note that $\mu_->\lambda_+$ implies asymmetry of stable and unstable Lyapunov spectra and
 $\dim E^s<\dim E^u$.
 \end{remark}

\begin{remark} Recall that Brin and Manning~\cite{Br,BM} proved that Anosov diffeomorphism lives on an infranilmanifold if at least one of the following pinching assumptions is satisfied
$$
1+\frac{\log \lambda_-}{\log \lambda_+}>\frac{\log\mu_+}{\log\mu_-},\,\,\,\mbox{or}\,\,\,\, 1+\frac{\log\mu_-}{\log\mu_+}>\frac{\log \lambda_+}{\log \lambda_-}
$$
 Our bunching assumption does not imply any of the Brin's pinching assumptions. For example take $\lambda_+=(\lambda_-)^2$, $\mu_-=(\lambda_-)^3$ and $\mu_+=(\lambda_-)^6$. Then both of Brin's conditions are violated, $b^s=2$, $b^u=2/3$ and, hence, our condition is satisfied for any $\kappa\in(1/2,1]$. Hence, at least in the current state of art, the above result is indeed on abstract transitive Anosov diffeomorphisms.
\end{remark}

We proceed with the proof of the matching theorem.

\subsection{The construction}
\label{sec_41}
We will fix a regularity constant $k\ge 1$, $k\le r$. The foliations subordinate to the unstable foliations which will construct will depend on the choice of this regularity constant. Later, for the proof of Theorem~\ref{thm_tech}, we will set $k=1$.

We will consider matching  $C^k$ functions on local unstable leaves. Namely, given a point $x\in M_1$ consider the space of pairs
$$
V^k_x=\{ (\phi_1,\phi_2): \,\phi_1\colon U(\phi_1)\to\R, x\in U(\phi_1)\subset W^u_{f_1,loc}(x),\, \phi_1=\phi_2\circ h,\, \phi_1, \phi_2\in C^k\}
$$
where $U(\phi_1)$ are open neighborhoods of $x$ in $W^u_{f_1,loc}(x)$. Also denote by $V^k_{x,i}$ the projections of $V^k$ to the $i$-th coordinate, $i=1,2$. Define 
$$
E_i(x)=\bigcap_{\phi\in V^k_{x,i}}\ker d_x\phi
$$
Denote $m_i(x)=\dim E_i(x)$ and let $m_i=\min_{x\in M_i}m_i(x)$.
Note that if $\phi_i\in V^k_{x,i}$ then the restriction of $\phi_i$ to a smaller open set which contains $x$ is also in $V^k_{x,i}$. From this observation and also using the conjugacy relation $h\circ f_1=f_2\circ h$, it is straightforward to verify the first of the following properties.

\begin{enumerate}
\item $Df_i (E_i(x))=E_i(f_i(x))$, $x\in M_i$ $i=1,2$;
\item functions $m_i\colon M_i\to\Z_+$ are upper semi-continuous on unstable leaves;
\item on the open set (with respect to the intrinsic topology of the unstable foliation) $\{x: m_i(x)=m_i\}$ where $m_i$ achieves its minimum the distribution $E_i$ is integrable to a foliation with $C^k$ charts.
\end{enumerate}

To check the second property, first notice that if $(\phi_1,\phi_2)\in V^k_x$ and $y\in U(\phi_1)$, the domain of $\phi_1$, then $(\phi_1,\phi_2)\in V^k_y$ as well. The linear space $E_i(x)$ can be written as a finite intersection of codimension 1 subspaces $\ker d_x\phi_i^j$, $\phi_i^j\in V^k_{x,i}$. Because $\phi_i^j$ are at least $C^1$ we have that $\ker d_y\phi_i^j$ depend continuously on $y$ and the intersection $\cap_j \ker d_x\phi_i^j$ have equal or bigger dimension (the dimension could be bigger if some of the kernels at $x$ coincide) as $\cap_j \ker d_y\phi_i^j$ for all $y$ which are sufficiently close to $x$. For all $y\in W^u_{f_i,loc}(x)$ which are sufficiently close to $x$ we have $E_i(y)\subset\cap_j \ker d_y\phi_i^j$, hence $\dim E_i(y)\le \dim E_i(x)$, \ie the dimension function is upper semi-continuous on unstable leaves. In particular, the set $\{x: m_i(x)=m_i\}$ is open on each unstable leaf.

Now, to check the third property take any $x$ such that $m_i(x)=m_i$. We can further refine the collection $\phi_i^j$ (if needed) so that the collection of differentials $\{d_x\phi_i^j\}_{j=1}^{d-m_i}$ is linearly independent. Here $d=\dim E^u_{f_i}$. Then,  by the above discussion (and minimality of the dimension function at $x$) we have 
$$
E_i(y)=\bigcap_{j=1}^{d-m_i} \ker d_y\phi_i^j
$$
for all $y\in U\subset W^u_{f_i,loc}(x)$. We can see now that that
$$
\Phi_{i,x}=(\phi_i^1,\phi_i^2,\ldots \phi_i^{d-m_i})
$$
is a foliation chart on an open neighborhood of $x$, $U$. Indeed, by linear independence of $\{d_y\phi_i^j\}_{j=1}^{d-m_i}$, $y\in U$, the  map $\Phi_{i,x}$ is a submersion, the level sets $\Phi_{i,x}=const$ have dimension $m_i$ and are tangent to $E_i$.

Note that the assumption $\mu_-^{-\kappa}\lambda_+<1$ implies that $b^s(f_i)>1$.
\begin{lemma}
\label{lemma51}
 If $k\le \min\{b^s(f_1),b^s(f_2)\}$ then $m_i(x)=m_i$ for all $x\in M_i$, $i=1,2$. 
\end{lemma}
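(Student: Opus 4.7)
The plan is to prove constancy of $m_i$ along stable leaves by transferring matching pairs via stable holonomies, and then to combine this with the upper semi-continuity along unstable leaves (already observed) to force global constancy. The bunching hypothesis $k\le b^s(f_i)$ is what ensures that stable holonomies are $C^k$, which is what makes the transfer of matching pairs work.

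First I would fix $x\in M_1$, $y\in W^s_{f_1,loc}(x)$, and a matching pair $(\phi_1,\phi_2)\in V^k_x$, and consider the transferred pair
\[
\tilde\phi_1=\phi_1\circ Hol_{y,x},\qquad \tilde\phi_2=\phi_2\circ Hol_{h(y),h(x)}.
\]
Because $h$ maps stable (resp. unstable) leaves of $f_1$ to those of $f_2$, stable holonomies intertwine with $h$:
\[
h\circ Hol_{y,x}=Hol_{h(y),h(x)}\circ h|_{W^u_{f_1,loc}(y)},
\]
which combined with $\phi_1=\phi_2\circ h$ gives $\tilde\phi_1=\tilde\phi_2\circ h$. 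Under the hypothesis $k\le b^s(f_i)$ both holonomies are $C^k$ (see Section~\ref{sec_foliations_regularity}), so $(\tilde\phi_1,\tilde\phi_2)\in V^k_y$. The chain rule then gives $\ker d_y\tilde\phi_1=(D_y Hol_{y,x})^{-1}(\ker d_x\phi_1)$, and intersecting over $(\phi_1,\phi_2)\in V^k_x$ yields $E_1(y)\subseteq (D_y Hol_{y,x})^{-1}(E_1(x))$, hence $m_1(y)\le m_1(x)$. A symmetric argument using the inverse holonomy gives the reverse inequality, so $m_1$ is constant on every stable leaf, and the same reasoning applies to $m_2$.

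To finish, let $\Sigma=\{x\in M_i:m_i(x)=m_i\}$. Upper semi-continuity of $m_i$ along unstable leaves makes $\Sigma$ open on each unstable leaf, and the previous step makes $\Sigma$ saturated by stable leaves; combining these in local product charts (in such a chart $\Sigma$ depends only on the unstable coordinate and is open in it) shows that $\Sigma$ is open in $M_i$. The set $\Sigma$ is also $f_i$-invariant by the $Df_i$-equivariance of $E_i$, so $M_i\setminus\Sigma$ is closed, $f_i$-invariant and $s$-saturated. Since the stable foliation of a transitive Anosov diffeomorphism is minimal, every non-empty closed $s$-saturated set is dense and hence equal to $M_i$. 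As $\Sigma$ is non-empty by definition of $m_i$, the complement must be empty and $m_i\equiv m_i$.

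The main obstacle, and the only place the bunching hypothesis is used, is the transfer of matching pairs in the first step: without $C^k$ stable holonomies the transferred functions would not lie in $V^k_y$ and the chain-rule calculation relating $E_1(y)$ to $E_1(x)$ would collapse.
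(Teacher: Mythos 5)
Your proof is correct and follows essentially the same route as the paper: transfer matching pairs along stable holonomies (using $k\le b^s(f_i)$ for $C^k$ regularity of the holonomies), deduce that $D\,Hol$ carries $E_i(x)$ to $E_i(y)$ so that $m_i$ is constant on stable leaves, and then conclude by minimality of $W^s_{f_i}$. You spell out the topological finish (openness of $\{m_i=m_i\}$ via upper semi-continuity plus $s$-saturation, then minimality forces the complement to be empty) more explicitly than the paper does, and the added observation that the set is $f_i$-invariant is true but not needed; otherwise this is the same argument.
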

\begin{proof}
Let $y\in W^s_{f_i}(x)$ and denote by $Hol_{x,y}\colon W^u_{f_i,loc}(x)\to W^u_{f_i,loc}(y)$ the stable holonomy map. By the assumption on $k$ this map is at least $C^k$ diffeomorphism.  Because the conjugacy $h$ sends $W^s_{f_1}$ to $W^s_{f_2}$, it respects the holonomy maps. Namely,  $h\circ Hol_{x,y}=Hol_{h(x),h(y)}\circ h$.

Now let $(\phi_1,\phi_2)\in V_x^k$. Then
$$
\phi_2\circ Hol^{-1}_{h(x),h(y)}\circ h=\phi_2\circ h\circ Hol^{-1}_{x,y}=\phi_1\circ Hol^{-1}_{x,y}
$$
Hence $\left (\phi_1\circ Hol^{-1}_{x,y},\phi_2\circ Hol^{-1}_{h(x),h(y)}\right )\in V_y^k$. Similarly, if $(\psi_1,\psi_2)\in V_y^k$ then $\left (\psi_1\circ Hol_{x,y},\psi_2\circ Hol_{h(x),h(y)}\right )\in V_x^k$. It immediately follows that $D\, Hol_{x,y}(E_i(x))=E_i(y)$ and, in particular, $m_i(x)=m_i(y)$. We can conclude that the set $\{x: m_i(x)=m_i\}$ is saturated by the stable leaves of $W^s_{f_i}$. Recall, that the stable foliation is minimal, hence, $m_i(x)=m_i$ for all $x$.
\end{proof}

By property~3 we conclude that $E_i$ integrates to a $C^k$ foliation $W_i\subset W^u_{f_i}$ (that is, $W_i$ is $C^k$ when restricted to an unstable leaf).

\begin{lemma} Foliations $W_i$ and $W_{f_i}^s$ integrate together to an $f_i$-invariant foliation.
\end{lemma}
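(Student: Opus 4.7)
The plan is to combine the local product structure of the Anosov diffeomorphism $f_i$ with the holonomy invariance of $W_i$ that was already extracted in the proof of Lemma~\ref{lemma51}. Concretely, I would fix $x\in M_i$ and a small local product neighborhood $U$ of $x$, together with the standard identification $U\cong W^u_{f_i,loc}(x)\times W^s_{f_i,loc}(x)$ via $(u,s)\mapsto [u,s]=W^s_{loc}(u)\cap W^u_{loc}(s)$. In these coordinates, horizontal slices $W^u_{f_i,loc}(x)\times\{s\}$ are local unstable leaves, vertical slices $\{u\}\times W^s_{f_i,loc}(x)$ are local stable leaves, and the stable holonomy between any two horizontal slices becomes the identity on the $u$-coordinate.

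The next step is to invoke the key observation extracted from the proof of Lemma~\ref{lemma51}: for $y,y'$ on the same local stable leaf, one has $D\,Hol_{y,y'}(E_i(y))=E_i(y')$, and hence $Hol_{y,y'}$ carries local $W_i$-plaques to local $W_i$-plaques. In the product coordinates this says precisely that the sub-foliation of each horizontal slice by $W_i$-plaques is independent of the slice: there exists a single foliation $\{L_\alpha\}$ of the unstable plaque $W^u_{f_i,loc}(x)$ such that for every $s\in W^s_{f_i,loc}(x)$ the local $W_i$-plaques sitting inside $W^u_{loc}(x)\times\{s\}$ are exactly the translates $L_\alpha\times\{s\}$.

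It follows that the sets $L_\alpha\times W^s_{f_i,loc}(x)\subset U$ form a topological foliation of $U$ whose leaves are simultaneously saturated by local stable leaves and by local $W_i$-plaques, and whose tangent distribution is exactly $E^s_{f_i}\oplus E_i$. Global leaves are then obtained in the standard way by propagating these local plaques, and $f_i$-invariance is immediate since the combined foliation is determined locally by $W^s_{f_i}$ and $W_i$, both of which are $f_i$-invariant. I do not anticipate any serious obstacle here: the nontrivial content, namely that the stable holonomies preserve the $W_i$-plaques, has already been established in Lemma~\ref{lemma51}, so this lemma is essentially a bookkeeping consequence of that fact together with local product structure. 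The further regularity claimed elsewhere in the theorem (e.g.\ $C^1$ smoothness of the joint foliation in item 3) is a separate issue and would be addressed later; here only integrability and $f_i$-invariance are needed.
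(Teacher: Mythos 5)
Your argument is correct and follows essentially the same route as the paper's: both hinge on the fact, extracted from the proof of Lemma~\ref{lemma51}, that stable holonomies carry local $W_i$-plaques to local $W_i$-plaques, and then combine this with the local product structure to assemble the joint foliation (the paper does this by composing the foliation chart $\Phi_{i,y}$ with $Hol_{x,y}$, whereas you argue at the level of $D\,Hol(E_i)=E_i$ plus $C^1$ regularity of the holonomy, which is a harmless stylistic difference).
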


\begin{proof} Let $\Phi_{i,y}\colon U_y\to \R^{d-m_i}$ be a foliation chart for $W_i$ as explained before. Then $\Phi_{i,y}\circ Hol_{x,y}$ (cf. the proof of Lemma~\ref{lemma51}) is a foliation chart for $W_i$ on a neighborhood of $x$. This implies that $Hol_{x,y}$ takes local leaves of $W_i$ in $Hol_{x,y}^{-1}(U_y)$ to local leaves of $W_i$ in $U_y$, which implies that $W_i$ and $W^s_{f_i}$ integrate together to a $C^k$ foliation (provided that  $k<b^s(f_i)$). The invariance is immediate from the invariance of $W_i$ and $W_{f_i}^s$.
\end{proof}

\begin{lemma} Foliations $W_1$ and $W_2$ have the same dimension and $h(W_1)=W_2$.
\label{lemma53}
\end{lemma}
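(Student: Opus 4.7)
The plan is to exploit the built-in symmetry of the definition of $V_x^k$. Since $h$ sends $W^u_{f_1}$ to $W^u_{f_2}$ and preserves the relation $\phi_1=\phi_2\circ h$, the map $(\phi_1,\phi_2)\mapsto(\phi_2,\phi_1)$ identifies $V_x^k$ (defined from the $M_1$-side at $x$) with the analogous set $V_{h(x)}^k$ defined from the $M_2$-side at $h(x)$. In particular, the second-coordinate projections satisfy $V_{x,2}^k=V_{h(x),2}^k$ as sets of $C^k$ functions on neighborhoods of $h(x)$ in $W^u_{f_2,loc}(h(x))$, and the distribution $E_2$ defined from the $M_2$-side agrees at $h(x)$ with $\bigcap_{\phi_2\in V_{x,2}^k}\ker d_{h(x)}\phi_2$. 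So the whole construction is symmetric under the conjugacy.

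Next I would observe that every $\phi_1\in V_{x,1}^k$ is constant on the local leaf $W_{1,loc}(x)$. Indeed, by definition $E_1\subset\ker d\phi_1$, so $d\phi_1$ vanishes on the tangent distribution $TW_1=E_1$, and by Lemma~\ref{lemma51} the integral foliation exists as a $C^k$ foliation, so $\phi_1$ is constant on its leaves. The matching relation $\phi_1=\phi_2\circ h$ then gives $\phi_2(h(y))=\phi_2(h(x))$ for every $y\in W_{1,loc}(x)$ and every $\phi_2\in V_{x,2}^k=V_{h(x),2}^k$. Using a local foliation chart $\Phi_{2,h(x)}=(\phi_2^1,\dots,\phi_2^{d-m_2})$ for $W_2$ near $h(x)$ produced in Section~\ref{sec_41}, we conclude that $h(y)$ lies in the same level set of $\Phi_{2,h(x)}$ as $h(x)$, which is exactly $W_{2,loc}(h(x))$. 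Therefore, shrinking neighborhoods so that $h(y)$ stays in the chart domain,
$$
h(W_{1,loc}(x))\subseteq W_{2,loc}(h(x)).
$$

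By the symmetric argument applied with $f_1$ and $f_2$ interchanged and $h$ replaced by $h^{-1}$, we obtain
$$
h^{-1}(W_{2,loc}(h(x)))\subseteq W_{1,loc}(x),
$$
which is the reverse inclusion. Consequently $h$ sends each leaf of $W_1$ homeomorphically onto a leaf of $W_2$, yielding $h(W_1)=W_2$. Since topological dimension is a homeomorphism invariant and leaves of $W_i$ are topological $m_i$-manifolds, we conclude $\dim W_1=\dim W_2$, i.e.\ $m_1=m_2$.

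The main conceptual obstacle is that $h$ is only a homeomorphism, so one cannot naively differentiate $\phi_1=\phi_2\circ h$ to relate $E_1$ and $E_2$. I sidestep this by passing to the \emph{integral} (level-set) picture: $\phi_j$'s are constant on their respective foliation leaves because of the smooth structure of each $W_j$, and the conjugacy relation between the \emph{level sets} (rather than the tangent distributions) is all one needs. The only technicality is ensuring that one works with neighborhoods small enough for the chosen $\phi_2^j$'s and for $h$ and $h^{-1}$ to map local leaves into the chart domains, which is routine.
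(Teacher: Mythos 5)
Your proof is correct and takes essentially the same route as the paper's: both arguments rest on the level-set characterization of the local leaves via the finite foliation charts built from $V^k_{x,i}$ together with the matching relation $\phi_1=\phi_2\circ h$, proving the two inclusions $h(W_{1,loc}(x))\subseteq W_{2,loc}(h(x))$ and its reverse by symmetry, and concluding $m_1=m_2$ by topological invariance of dimension (Invariance of Domain). The only difference is cosmetic: you prove the ``$\subseteq$'' inclusion directly (from constancy of $\phi_1$ on $W_1$-leaves) and get ``$\supseteq$'' by swapping roles, whereas the paper writes $W_{1,loc}(x)$ as a finite intersection of level sets, rewrites via $h$, and compares to the full intersection over $V^k_{x,2}$; these are the same argument read in opposite directions.
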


\begin{proof}
Let $\Phi_{1,x}=(\phi_1^1,\phi_1^2,\ldots \phi_1^{d-m_1})$ be a chart for $W_1$ centered at $x$. Then we can express the local leaf through $x$ as the intersection of level sets
\begin{multline*}
W_{1,loc}(x)=\bigcap_{j=1}^{d-m_1}(\phi_1^j)^{-1}(\phi_1^j(x))=\bigcap_{j=1}^{d-m_1}h^{-1}((\phi_2^j)^{-1}(\phi_2^j(h(x))))\\
=h^{-1}\left(\bigcap_{j=1}^{d-m_1}(\phi_2^j)^{-1}(\phi_2^j(h(x)))\right)\supset h^{-1}\left(\bigcap_{\phi\in V_{x,2}^k}\phi^{-1}(\phi(h(x)))\right)\\
=h^{-1}(W_{2,loc}(h(x)))
\end{multline*}
Similarly, using a chart at $h(x)$ we also have $W_{2,loc}(h(x))\supset h(W_{1,loc}(x))$. Hence $h(W_1)=W_2$ and, by Invariance of Domain, $m_1=m_2$.
\end{proof}

\begin{lemma} The restrictions of $h$ to the unstable leaves are uniformly $C^k$, transversely to $W_1$.
\label{lemma_47}
\end{lemma}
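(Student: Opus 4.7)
The plan is to leverage the matching foliation chart $\Phi_{1,x}$ and its partner $\Phi_{2,h(x)}$ constructed in Section~\ref{sec_41}, which by design satisfy $\Phi_{1,x} = \Phi_{2,h(x)} \circ h$ on a neighborhood of $x$ inside $W^u_{f_1,loc}(x)$. By Lemma~\ref{lemma53} we have $m_1 = m_2$, so both charts are $C^k$ submersions onto open sets in $\R^{d-m_1}$, where $d = \dim E^u_{f_1}$, and their level sets are exactly the local leaves of $W_1$ and $W_2$, respectively. First I would pick a $C^k$ local transversal $\tau_x \subset W^u_{f_1,loc}(x)$ to $W_1$ at $x$ and a transversal $\tau_{h(x)} \subset W^u_{f_2,loc}(h(x))$ to $W_2$ at $h(x)$. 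Because the $\Phi_{i,\cdot}$ are submersions whose kernels are tangent to $W_i$, the restrictions $\Phi_{1,x}|_{\tau_x}$ and $\Phi_{2,h(x)}|_{\tau_{h(x)}}$ are $C^k$ diffeomorphisms onto open neighborhoods of the origin in $\R^{d-m_1}$, and their inverses are likewise $C^k$ by the inverse function theorem.

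The key step is to identify the transverse component of $h|_{W^u_{f_1,loc}(x)}$ with an explicit composition of these $C^k$ maps. Define $h_\tau \colon \tau_x \to \tau_{h(x)}$ by sending $y \in \tau_x$ to the unique point of $W_{2,loc}(h(y)) \cap \tau_{h(x)}$, well defined since $h(W_1) = W_2$. Since $\Phi_{2,h(x)}$ is constant along local leaves of $W_2$, the matching identity yields
\[
\Phi_{2,h(x)}(h_\tau(y)) \;=\; \Phi_{2,h(x)}(h(y)) \;=\; \Phi_{1,x}(y),
\]
so that
\[
h_\tau \;=\; \bigl(\Phi_{2,h(x)}|_{\tau_{h(x)}}\bigr)^{-1} \circ \Phi_{1,x}|_{\tau_x}.
\]
This exhibits $h_\tau$ as a composition of two $C^k$ diffeomorphisms, hence $C^k$, which is precisely the assertion that $h|_{W^u_{f_1}(x)}$ is $C^k$ transversely to $W_1$ at $x$.

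For the uniformity over $x \in M_1$, I would invoke compactness of $M_1$ together with continuous dependence of the matching charts on the basepoint. Cover $M_1$ by finitely many open sets on which one can choose matching data yielding uniform $C^k$ bounds on $\Phi_{1,\cdot}$, $\Phi_{2,\cdot}$ and their transversal inverses; then use the fact that these charts can be transported along the $C^k$ stable holonomies of $f_1$ and $f_2$ (which are at least $C^k$ by the bunching hypothesis $k \le \min\{b^s(f_1), b^s(f_2)\}$ enforced in Section~\ref{sec_41}) to extract a single $C^k$ bound on $h_\tau$ that is independent of $x$. The conceptually important step is the identification of $h_\tau$ with $(\Phi_{2,h(x)}|_{\tau_{h(x)}})^{-1} \circ \Phi_{1,x}|_{\tau_x}$; the main obstacle I anticipate is only the bookkeeping required to make the uniformity honest, in particular to check that one can pick the matching data on a finite cover in a way that keeps the sizes of the transversals and the $C^k$-norms of the inverses bounded below and above respectively.
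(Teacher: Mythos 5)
Your strategy is in the same spirit as the paper's -- express the transverse component $h_\tau$ of $h$ via the foliation charts furnished by the matching spaces $V^k_x$ -- but there is a genuine gap in the central step. You define $\Phi_{2,h(x)}=(\phi_2^1,\ldots,\phi_2^{d-m_1})$ as the tuple of \emph{partners} of the chosen $\phi_1^j$, so that $\Phi_{1,x}=\Phi_{2,h(x)}\circ h$, and then assert that both tuples are $C^k$ submersions. The construction in Section~\ref{sec_41} only guarantees that the $\phi_1^j$ can be chosen so that $\{d_x\phi_1^j\}_{j=1}^{d-m_1}$ is linearly independent, making $\Phi_{1,x}$ a submersion; it says nothing about the differentials $\{d_{h(x)}\phi_2^j\}$, since $h$ is merely a homeomorphism and does not transport linear independence across. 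All one can say a priori is $\cap_j\ker d_{h(x)}\phi_2^j\supseteq E_2(h(x))$, so the rank of $d_{h(x)}\Phi_{2,h(x)}$ could drop below $d-m_1$. In that case $\Phi_{2,h(x)}|_{\tau_{h(x)}}$ is still a $C^k$ homeomorphism onto its image (it separates leaves of $W_2$ because $h(W_1)=W_2$), but its inverse need not be $C^k$ -- exactly the kind of degeneracy one is trying to rule out -- so the formula $h_\tau=(\Phi_{2,h(x)}|_{\tau_{h(x)}})^{-1}\circ\Phi_{1,x}|_{\tau_x}$ does not by itself yield $C^k$ regularity of $h_\tau$.

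The paper avoids this by working with two \emph{independently chosen} genuine charts: a chart $\Phi_{1,x}$ built at $x$ with $\{d_x\phi_1^j\}$ independent, and a separate chart $\Psi_{2,h(x)}$ built at $h(x)$ from $V^k_{x,2}$ with $\{d_{h(x)}\psi_2^j\}$ independent. The key point is then that $\Psi_{2,h(x)}\circ h=\Psi_{1,x}$ is $C^k$ \emph{by the definition of the matching space}, without any claim that $\Psi_{1,x}$ is a submersion. Writing the leaf-space map as $\bar h\circ\Phi_{1,x}=\Psi_{1,x}$ and using a $C^k$ local section of the honest submersion $\Phi_{1,x}$ gives $\bar h$ as a composition of $C^k$ maps; the symmetric argument with the roles of $f_1$ and $f_2$ interchanged gives $C^k$ regularity of $\bar h^{-1}$. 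If you want to salvage your version, you would need to argue that the $\phi_1^j$ can be \emph{re-chosen} so that the differentials of both $\phi_1^j$ at $x$ and $\phi_2^j$ at $h(x)$ are simultaneously independent (which is possible by a general position argument but requires justification), or simply switch to the paper's two-chart bookkeeping. The uniformity discussion at the end of your proposal (finite cover plus stable holonomies) matches the paper's.
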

\begin{proof}
Recall that $W_i$ is $C^k$ when restricted to unstable leaves and, by Lemma~\ref{lemma53}, locally, $h$ induces a homeomorphism $\bar h$ on the space of leaves of $W_i$. We need to prove that these local homeomorphisms $\bar h$ are uniformly $C^k$. 

Consider a foliation chart $\Phi_{1,x}=(\phi_1^1,\phi_1^2,\ldots \phi_1^{d-m_1})$ for $W_1$ centered at $x$ and chart $\Psi_{2,h(x)}=(\psi_2^1,\psi_2^2,\ldots \psi_2^{d-m_1})$  for $W_2$ centered at $h(x)$, which come from the spaces $V_{1,x}^k$ and $V_{2,x}^k$, respectively.
Then the induced homeomorphism $\bar h$ can be expressed in charts as
$$
\bar h(\phi_1^1(y),\phi_1^2(y),\ldots \phi_1^{d-m_1}(y))=(\psi_2^1(h(y)),\psi_2^2(h(y)),\ldots \psi_2^{d-m_1}(h(y)))
$$
Recall that $\psi_2^j\circ h=\psi_1^j$ for some $C^k$ functions $\psi_1^j$. Therefore,
$$
\bar h(\phi_1^1(y),\phi_1^2(y),\ldots \phi_1^{d-m_1}(y))=(\psi_1^1(y),\psi_1^2(y),\ldots \psi_1^{d-m_1}(y))
$$
which implies that $\bar h$ is $C^k$ because $\Phi_{1,x}$ is a $C^k$ submersion. Symmetric argument yields $C^k$ regularity of $\bar h^{-1}$. Finally it is easy to see uniformity (provided that $k\le \min\{b^s(f_1),b^s(f_2)\}$) of the $C^k$ regularity by using finitely many unstable plaques which are sufficiently dense and observing that $C^k$ regularity holds uniformly on all plaques related to chosen ones via short stable holonomies.
\end{proof}

\subsection{Proof of Theorem~\ref{thm_tech2}} We apply the construction described above with $k=1$. Note that the preceding lemmas yield all the conclusions of Theorem~\ref{thm_tech2} except for the last one. The proof of the last property is based on the following variant of the lemma about regularity of simple periodic cycle functionals. Recall the definition of simple PCF $\rho_{a,b} ^\phi$ from Section~\ref{sec_holonomy}. We will need the following variant of Lemma~\ref{lemma1}.
\begin{lemma} 
\label{lemma1bis}
If $\varphi\in C^{1+\kappa}(M,\R)$ and $f\colon M\to M$ is an Anosov diffeomorphism satisfying such that
$$
\mu_-^{-\min\{\kappa, \kappa b^u(f_i), b^s(f_i)-1\}}\lambda_+<1
$$
 then $\rho_{a,b}^\phi\colon W^u_{loc}(a)\to \R$ is $C^1$ regular.
\end{lemma}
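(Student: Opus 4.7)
The plan is to mirror the proof of Lemma~\ref{lemma1}, the only modification being that the use there of $C^1$-regularity of $E^u$ and $C^2$-regularity of $W^s$ (automatic in the perturbative 3-dimensional setting) must be replaced by H\"older estimates reflecting the bunching parameters $b^u(f)$ and $b^s(f)$. I expect the hypothesis $\mu_-^{-\min\{\kappa,\kappa b^u(f),b^s(f)-1\}}\lambda_+<1$ to be precisely what is needed to make the relevant formal-derivative series converge uniformly. I use the same decomposition of $\rho_{a,b}^\phi$ into four series and dispose of the easy ones first: the first series is a constant, while the $n<0$ series are handled by termwise differentiation using that $\phi$ is $C^1$ and that $Hol_{a,b}$ is $C^1$ (the hypothesis forces $b^s(f)>1$). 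The exponential contraction $\|Df^n|_{E^u}\|\le C\lambda_+^n$ for $n<0$ ensures uniform convergence of the resulting derivative series.

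The substantive work is on $S(x)=\sum_{n\ge 0}\phi(f^n(x))-\phi(f^n(Hol(x)))$. Differentiating $S$ formally along $W^u_{loc}(a)$ and inserting and subtracting $D_u\phi(f^n(Hol(x)))\cdot D_uf^n(x)$ splits the result into $\sum A_n+\sum B_n$, where
\begin{align*}
A_n &= \big(D_u\phi(f^n(x))-D_u\phi(f^n(Hol(x)))\big)\cdot D_uf^n(x),\\
B_n &= D_u\phi(f^n(Hol(x)))\cdot\big(D_uf^n(x)-D_uf^n(Hol(x))\cdot D\,Hol(x)\big).
\end{align*}
For $\sum B_n$ I invoke the commutation $f^n\circ Hol_{a,b}=Hol_{f^n(a),f^n(b)}\circ f^n$ to rewrite the parenthesis as $(Id-D\,Hol_{f^n(a),f^n(b)}(f^n(x)))\cdot D_uf^n(x)$; since $W^s$ is $C^{b^s(f)}$ with $b^s(f)>1$, the map $y\mapsto D\,Hol_{z,y}$ is $C^{b^s(f)-1}$ with value $Id$ at $y=z$, and $d(f^n(a),f^n(b))\le C\mu_-^{-n}$, giving $\|B_n\|\le C\mu_-^{-n(b^s(f)-1)}\lambda_+^n$. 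For $\sum A_n$, combining the $\kappa$-H\"older regularity of $D\phi$ with the $b^u(f)$-H\"older regularity of the subbundle $E^u$ produces a H\"older modulus for $D_u\phi=D\phi|_{E^u}$ of exponent at least $\min\{\kappa,\kappa b^u(f)\}$, which together with $d(f^n(x),f^n(Hol(x)))\le C\mu_-^{-n}$ and $\|D_uf^n(x)\|\le C\lambda_+^n$ yields $\|A_n\|\le C\mu_-^{-n\min\{\kappa,\kappa b^u(f)\}}\lambda_+^n$.

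Under the hypothesis both $\sum A_n$ and $\sum B_n$ are therefore dominated by convergent geometric series. Uniform convergence then upgrades the formal derivative of $\rho_{a,b}^\phi$ to an honest continuous derivative along $W^u_{loc}(a)$ by the Weierstrass $M$-test, exactly as in Lemma~\ref{lemma1}. The step I expect to be the main obstacle is the $\sum A_n$ bound when $b^u(f)<1$: in that regime the unstable distribution is only H\"older and not $C^1$, so one has to interpret $D_u\phi$ as a H\"older section of $E^{u*}$ and quantify its H\"older exponent with care; the conservative bound $\min\{\kappa,\kappa b^u(f)\}$ appearing in the hypothesis reflects exactly the bookkeeping involved in comparing a $\kappa$-H\"older section of $T^*M$ with a $b^u(f)$-H\"older subbundle of $TM$. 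The $\sum B_n$ estimate, by contrast, is standard and uses only the $(b^s(f)-1)$-H\"older dependence of stable holonomy differentials on their base point, which is a classical consequence of the regularity theory for the stable foliation.
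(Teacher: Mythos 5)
Your decomposition of $\rho_{a,b}^\phi$, the treatment of the easy series, the split into $\sum A_n+\sum B_n$, and the bound on $\sum A_n$ all agree with the paper's proof. The gap is in your estimate for $\sum B_n$. You assert that $y\mapsto D\,Hol_{z,y}$ is $C^{b^s(f)-1}$ with value $Id$ at $y=z$, and conclude $\|Id-D\,Hol_{f^n(a),f^n(b)}(f^n(x))\|\le C\mu_-^{-n(b^s-1)}$. This is not justified as stated, and the paper goes out of its way to avoid making this claim. The subtlety is that $D\,Hol_{z,y}(x)$ is a linear map from $E^u(x)$ to $E^u(Hol_{z,y}(x))$, so comparing it to the identity requires an identification of these two subspaces of $T M$ inside a smooth chart. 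The subspaces themselves agree only up to an angle of order $\mathrm{dist}(x,Hol_{z,y}(x))^{\min\{b^u,1\}}$ because $E^u$ is merely $C^{b^u}$-H\"older transversally. Thus a $b^u$-dependent term necessarily enters the estimate, independently of how regular the stable foliation is. The $C^{b^s-1}$ dependence in $y$ is correct for stable holonomy between \emph{smooth} transversals, but the actual unstable leaves are not smoothly aligned in the transverse direction, and you have swept this under the rug.

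The paper handles exactly this issue by introducing a smooth auxiliary foliation $\bar W^u$ tangent to $W^u$ along the leaf of $f^n(x)$ and the associated pseudo-holonomy $\overline{Hol}$. Splitting $w_n-w_n^{Hol}$ as $(w_n-\bar w_n^{Hol})+(\bar w_n^{Hol}-w_n^{Hol})$, they get the $\mu_-^{-n\min\{b^s-1,1\}}$ decay from the first piece (smooth transversal, so the $C^{b^s-1}$ argument applies) and $\mu_-^{-n\min\{b^u,1\}}$ decay from the second piece (comparing $E^u$ with $\bar E^u$ via H\"older continuity of $E^u$). The resulting bound for $\sum B_n$ is $C\lambda_+^n\bigl(\mu_-^{-n\min\{b^s-1,1\}}+\mu_-^{-n\min\{b^u,1\}}\bigr)$, and both exponents are needed. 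The stated hypothesis does cover both (since $\min\{\kappa,\kappa b^u,b^s-1\}\le\min\{b^u,1\}$ when $\kappa\le 1$), so the lemma's conclusion is safe, but your argument as written would fail to justify the $\min\{b^u,1\}$ contribution and hence does not actually close the estimate. To repair it, insert the pseudo-foliation comparison or some equivalent device that explicitly tracks the H\"older discrepancy between $E^u$ at the source and target of the holonomy.
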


Now the proof of property~6 follows easily. Recall that $\phi_i\in C^{1+\kappa}(M)$ and $(f_1,\phi_1)\sim(f_2,\phi_2)$. Therefore $(\rho_{a,b}^{\phi_1}, \rho_{h(a),h(b)}^{\phi_2})\in V_a^1$ because PCFs provide invariants for cohomology (see discussion at the beginning of Section~\ref{sec_pcf}) and by the above lemma these functions are $C^1$. Now, from construction of the foliation $W_1$ we conclude that $\rho_{a,b}^{\phi_1}$ is constant on local leaves of $W_1$. Hence $\rho_{a,b}^{\phi_1}|_{W_{1,loc}(a)}=0$ and $\rho_{h(a),h(b)}^{\phi_2}|_{W_{2,loc}(h(a))}=0$.

Thus it remains to establish Lemma~\ref{lemma1bis}. Note that $\rho_{a,b}^\phi$ is only defined on the local unstable manifold due to possible lack of global product structure.

\begin{proof}[Proof of Lemma~\ref{lemma1bis}]
The proof goes in the same way as the proof of Lemma~\ref{lemma1} by considering the formal derivative of the series and proving exponential convergence. However the estimate for the following term in the formal derivative is more delicate (the other terms of the formal derivative can be seen to converge easily)
$$
\sum_{n\ge 0}D_u\phi(f^n(x))D_uf^n(x)-D_u\phi(f^n(Hol_{a,b}(x)))D_uf^n(Hol_{a,b}(x))DHol_{a,b}(x)
$$
Recall that we identify tangent spaces at nearby points using finitely many smooth charts. As before we split the sum into two sums and use the triangle inequality.
\begin{multline*}
\sum_{n\ge 0}D_u\phi(f^n(x))D_uf^n(x)-D_u\phi(f^n(Hol_{a,b}(x)))D_uf^n(Hol_{a,b}(x))D\,Hol_{a,b}(x)\\
=\sum_{n\ge0}\big(D_u\phi(f^n(x))-D_u\phi(f^n(Hol_{a,b}(x)))\big)D_uf^n(x)\\
+\sum_{n\ge0}D_u\phi(f^n(Hol_{a,b}(x)))\big(D_uf^n(x)-D_uf^n(Hol_{a,b}(x))D\,Hol_{a,b}(x)\big)
\end{multline*}
For the terms in the first sum we have
\begin{multline*}
\|\big(D_u\phi(f^n(x))-D_u\phi(f^ n(Hol_{a,b}(x)))\big)D_uf^n(x)\|\\
\le C\, dist(E^u(f^n(x)), E^u(f^n(Hol_{a,b}(x))))^\kappa\lambda_+^n\\
\le C\, dist(f^n(x), f^n(Hol_{a,b}(x)))^{\kappa\min\{b^u,1\}}\lambda_+^n
\le C \mu_-^{-n\kappa\min\{b^u,1\}}\lambda_+^n
\end{multline*}
which converges by our assumption $\mu_-^{-\kappa\min\{b^u,1\}}\lambda_+<1$.

Now we use the fact that $D_u\phi$ is uniformly bounded and also the commutation relation between holonomy and dynamics to derive the following bound on the terms of the second series 
\begin{multline*}
\|D_u\phi(f^n(Hol_{a,b}(x)))\big(D_uf^n(x)-D_uf^n(Hol_{a,b}(x))D\,Hol_{a,b}(x)\big)\|\\
\le C\|D_uf^n(x)-D\, Hol_{f^n(a),f^n(b)} D_u f^n(x)\|
\end{multline*}
Pick a unit vector $v\in E^u(x)$, let $v_n=Df^n(v)$, $v^{Hol}=DHol_{a,b}(v)$ and $v^{Hol}_n=Df^n(v^{Hol})$. Also let $w_n=v_n/\|v_n\|$ and $w^{Hol}_n=v^{Hol}_n/\|v_n\|$. Note that after normalization we still have $w_n^{Hol}=DHol_{f^n(a),f^n(b)}(w_n)$. 

We have
\begin{multline*}
\|D_uf^n(x)-D\, Hol_{f^n(a),f^n(b)} D_u f^n(x)\|=\sup_{v\in E^u(x),\|v\|=1}\|v_n-v^{Hol}_n\|\\
\le C\lambda_+^n \sup_{w_n\in E^u(f^nx), \|w_n\|=1}\|w_n-w^{Hol}_n\|
\end{multline*}

Let $\bar W^u$ be a foliation in a neighborhood of $f^n(x)$ which is uniformly smooth (uniformly in $n$) and such that $\bar W^u(f^n(x))=W^u(f^n(x))$. Note that the angle between $\bar W^u$ and $W^u$ is $C^{b^u}$-H\"older function of the point.

Accordingly consider the pseudo stable holonomy map given by sliding along the leaves of  $\bar W^u$ which we denote by $\overline{Hol}\colon W^u_{loc}(f^n(x))\to \bar W^u(Hol_{f^n(a),f^n(b)}(f^n(x)))$ and define
$\bar w_n^{Hol}=D\overline{Hol}_{f^n(a),f^n(b)}(w_n)$. Then, because $\bar W^u$ is smooth and $W^s$ is $C^{b^s}$ we have that $D \overline{Hol}$ is $C^{\min\{b^s-1,1\}}$ H\"older along the stable leaf $W^s(f^n(x))$ and hence,
$$
\|w_n-\bar w_n^{Hol}\|\le C\, dist(f^n(x), f^n(Hol_{a,b}(x)))^{\min\{b^s-1,1\}}\le C\,\mu_-^{-n\min\{b^s-1,1\}}
$$
Vectors $w_n^{Hol}$ and $\bar w_n^{Hol}$ are based at the same point and have length very close to 1 because $w_n$ is a unit vector. Hence by H\"older continuity of $E^u$ we have
\begin{multline*}
\|w^{Hol}_n-\bar w_n^{Hol}\|\le C\measuredangle(w^{Hol}_n, \bar w^{Hol}_n)\le C dist(f^n(x), f^n(Hol_{a,b}(x)))^{\min\{b^u,1\}}\\
\le C\,\mu_-^{-n\min\{b^u,1\}}
\end{multline*}
Hence, using the triangle inequality we estimate the terms of the second series as follows
$$
\|D_uf^n(x)-D\, Hol_{f^n(a),f^n(b)} D_u f^n(x)\|\le C\lambda_+^n(\mu_-^{-n\min\{b^s-1,1\}}+\mu_-^{-n\min\{b^u,1\}})
$$
which also converges exponentially by our assumptions on the rates and bunching parameters. 
\end{proof}

\section{Proof of Theorem~\ref{thm_misha}}

We begin by calculating stable and unstable bunching parameters for $L$
$$
b^s(L)=1+\frac2\alpha>1+\frac{8}{\sqrt{17}+1}>2,\,\,\,\,\, b^u(L)=1+\frac{1}{1+\alpha}>1+\frac{4}{\sqrt{17}+2}>1
$$
The eigenvalues $\mu^{-1}<\lambda<\lambda^\alpha$ correspond to the partially hyperbolic splitting $T\T^3=E^s_L\oplus E^{wu}_L\oplus E^{uu}_L$. 
Also recall the notation for exponential bounds $\lambda_+$ and $\mu_-$ introduced in Section~\ref{sec_foliations_regularity}.
We fix a number $\eta\in(\alpha, \frac{1+\sqrt{17}}{4})$ and a small $\delta>0$ such that $(\lambda+\delta)^\alpha<(\lambda-\delta)^\eta$. We consider a sufficiently small $C^1$ neighborhood $\cU$ of $L$ such that 
\begin{enumerate}
\item 
$f\in \cU$ are Anosov;
\item the partially hyperbolic splitting persists;
\item $b^s(f)>2$;
\item $b^u(f)>1$;
\item $\lambda_+<(\lambda+\delta)^\alpha$;
\item $\lambda_->\lambda-\delta>1$;
\item $\mu_-^{-1}\lambda_+<(\lambda-\delta)^{-1}$
\end{enumerate}

We will denote the partially hyperbolic splitting for $f$ by $E^s_f\oplus E^{wu}_f\oplus E^{uu}_f$ and write $J^\sigma f$, $\sigma=s, wu, uu$, for corresponding Jacobians.

Define $\cV$ in the following way.
$$
\cV=\{f\in\cU:  \log Jf, \log J^{wu}f\,\,\,\mbox{and}\,\, \log J^{uu}f\,\,\, \mbox{are not cohomologous to constants}\}
$$ 
This is indeed an open condition, because a Jacobian not being cohomologous to a constant can always be detected from a pair of periodic points.
\begin{remark}
Alternatively we can consider a different set $\cV$.
$$
\cV=\{f\in\cU:  \log J^sf, \log J^{wu}f\,\,\,\mbox{and}\,\, \log J^{uu}f\,\,\, \mbox{are not cohomologous to constants}\}
$$ 
The proof would work in the same way.
\end{remark}

We apply Theorem~\ref{thm_tech2} to $f_1$ and $f_2$ with $\kappa=1$. However we have to apply the construction of Section~\ref{sec_41} rather than Theorem~\ref{thm_tech2}  per se. Namely, we use spaces of matching functions $V^k_x$ with regularity 
$$
k=\frac{2\eta+2}{2\eta+1}$$
 This construction yields invariant distributions $E_i\subset E_{f_i}^u$ which integrate to $C^k$ foliations $W_i$. 
The strategy of the proof now is to obtain posited regularity in the case when $\dim E_i=0$ and rule out the case $\dim E_i>0$. 

\subsection{Case I: $\dim E_i=0$}
Recall that by Lemma~\ref{lemma_47} restrictions of $h$ to the unstable leaves are uniformly $C^k$, transversely to $W_1$. In this case, it means that $h$ is uniformly $C^k$ along unstable leaves. We will now apply a bootstrap argument using normal forms to gain optimal regularity. 

Denote by $\cH_x^i$, the affine structure for $f_i$ along the unstable foliation $W^u_{f_i}$, $i=1,2$.  Also consider non-stationary linearization for $f_1$ given by ${\cH^1_x}'=(h|_{W^u_{f_1}(x)})^{-1}\circ \cH^2_x\circ Dh|_{E^u_{f_1}(x)}$. Note that ${\cH^1}'$ is $C^{k-1}$ along the unstable leaves. Now we would like to use uniqueness of normal form given by item~6 of Proposition~\ref{prop_normal_forms} for the expanding foliation and conclude that $\cH_1={\cH_1}'$. Recall that according to Proposition~\ref{prop_normal_forms} in order for uniqueness to hold we need
$$
	\|(Df_1^n|_{E_{f_1}^u})^{-1}\|^{k}\cdot \|Df^n|_{E_{f_1}^u}\|\le C\gamma^n
$$ 
for some $C>0$ and $\gamma<1$. We have
\begin{multline*}
	\|(Df_1^n|_{E_{f_1}^u})^{-1}\|^{k}\cdot \|Df^n|_{E_{f_1}^u}\|\le C(\lambda_-^{-k})^n(\lambda_+)^n\\
	\le C((\lambda-\delta)^{-k}(\lambda+\delta)^\alpha)^n
	\le C((\lambda-\delta)^{\eta-k})^n
\end{multline*}
Hence we need to have $k>\eta$ which is equivalent to
$$
2\eta^2-\eta-2<0
$$
It is easy to see what this inequality holds because $\eta<\frac{1+\sqrt{17}}{4}$.
Hence we obtain ${\cH^1_x}'=\cH^1_x$ which means that $h|_{W^u_{f_1}(x)}=\cH^2_x\circ Dh|_{E^u_{f_1}(x)}\circ (\cH^1_x)^{-1}$. Hence $h$ is as smooth as diffeomorphisms $f_i$ along the unstable foliation.

The proof in this case completes in the usual way. The matching of stable Jacobian gives smoothness of $h$ along $W^s_{f_1}$ and then we can apply Journ\'e Lemma to finish.

\subsection{Case II: $\dim E_i=1$} Recall that the integral foliation $W_1$ of the distribution $E_1$ is a $C^k$ sub-foliation of $W^u_{f_1}$ which integrates together with $W^s_{f_1}$. From invariance of $W_1$ we have that either $W_1=W^{wu}_{f_1}$ or $W_1=W^{uu}_{f_1}$. If $W_1=W^{wu}_{f_1}$ then by~\cite[Lemma~3]{G3} Lipschitz property of $W^{wu}_{f_1}$ implies that the strong unstable Jacobian $J^{uu}f_1$ is cohomologous to a constant, which contradicts to $f_1\in\cV$. If $W_1=W^{uu}_{f_1}$ then a similar result~\cite{GShi} implies that the weak unstable Jacobian $J^{wu}f_1$ is cohomologous to a constant contradicting to $f_1\in\cV$ again. (In the proof of Theorem~\ref{thm_codim1} in the next section we will explain how arguments from~\cite{G3, GShi} work in higher dimensional setting as well; see also~\cite{GS}.)

\subsection{Case III: $\dim E_i=2$}

We will need an improved version of Lemma~\ref{lemma1}.

\begin{lemma}
\label{lemma_52}
 If $f\in \cU$ and $\phi\in C^2(\T^3)$ then $\rho_{a,b}^\phi\colon W^u(a)\to \R$ is $C^k$.
\end{lemma}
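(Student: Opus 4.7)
The argument will follow Lemma~\ref{lemma1} with $\kappa=1$: since $\phi\in C^2$, its derivative $D_u\phi$ is Lipschitz, so Lemma~\ref{lemma1} (combined with the $C^2$ regularity of the stable holonomy, which follows from $b^s(f)>2$ in $\cU$) already produces $C^1$ regularity of $\rho_{a,b}^\phi$ along $W^u(a)$ with derivative represented by an explicit uniformly convergent series. The plan is then to show that this derivative is H\"older continuous of exponent $k-1=\frac{1}{2\eta+1}$ along $W^u$, which upgrades $C^1$ to $C^k$.

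Among the four series contributing to the formal derivative, only the second one requires genuine work: the first is constant, while the third and fourth are even $C^2$ by direct termwise differentiation (the backward iterates $f^n$, $n<0$, contract $W^u$ exponentially). For the remaining series
$$
D\rho_2(x) = \sum_{n\ge 0} T_n(x),
$$
where
$$
T_n(x)=D_u\phi(f^n(x))D_uf^n(x)-D_u\phi(f^n(Hol_{a,b}(x)))D_uf^n(Hol_{a,b}(x))D\,Hol_{a,b}(x),
$$
the proof will combine two complementary bounds on each term: (i) the uniform bound $|T_n(x)|\le C(\mu_-^{-1}\lambda_+)^n$, extracted from the proof of Lemma~\ref{lemma1}; and (ii) a Lipschitz bound $|T_n(x_1)-T_n(x_2)|\le C\lambda_+^{2n}|x_1-x_2|$ along $W^u(a)$. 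The Lipschitz bound follows from the Leibniz rule, $\phi\in C^2$, the $C^2$ regularity of $Hol_{a,b}$, and the standard estimate (via the chain rule and induction) that the Lipschitz constant along $W^u$ of $x\mapsto D_uf^n(x)$ is at most $C\lambda_+^{2n}$ after absorbing polynomial factors in $n$.

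Combining the two bounds via the elementary interpolation $\min(a,b)\le a^{1-\beta}b^\beta$, one obtains for any $\beta\in(0,1)$
$$
|T_n(x_1)-T_n(x_2)|\le C(\mu_-^{-1}\lambda_+)^{n(1-\beta)}\lambda_+^{2n\beta}|x_1-x_2|^\beta,
$$
and the summability of this series in $n$ is equivalent to $(1+\beta)\log\lambda_+<(1-\beta)\log\mu_-$. Using the conditions $\lambda_+<(\lambda+\delta)^\alpha$ and $\mu_->(\lambda-\delta)\lambda_+$ built into $\cU$ (the latter being a consequence of $\mu_-^{-1}\lambda_+<(\lambda-\delta)^{-1}$), this inequality reduces in the limit $\delta\to 0$ to $\beta<\frac{1}{2\alpha+1}$. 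Since $\eta\in(\alpha,\frac{1+\sqrt{17}}{4})$, the target exponent $k-1=\frac{1}{2\eta+1}$ lies strictly below $\frac{1}{2\alpha+1}$ and is therefore admissible for $\cU$ taken small enough, so $D\rho_2$ (and hence $D\rho_{a,b}^\phi$) is $(k-1)$-H\"older, as required.

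The principal technical hurdle is the Lipschitz bound on $T_n$: controlling $x\mapsto D_uf^n(x)$ and consistently identifying unstable tangent spaces at nearby base points (in the spirit of Lemma~\ref{lemma1}), while keeping track of polynomial factors in $n$ and absorbing them into the exponential growth $\lambda_+^{2n}$. Once the Lipschitz estimate is in place, the interpolation inequality and the arithmetic with the bunching constants of $\cU$ are routine.
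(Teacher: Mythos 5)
Your proof is correct and follows essentially the same route as the paper: the same decomposition of $\rho_{a,b}^\phi$ into four contributions (constant, two backward series that are already $C^2$, and the hard forward series), and the same pair of estimates on the hard series — the uniform exponential bound $|T_n|\le C(\mu_-^{-1}\lambda_+)^n$ inherited from Lemma~\ref{lemma1}, and the Lipschitz-along-$W^u$ bound $|T_n(x_1)-T_n(x_2)|\le C\lambda_+^{2n}\,d(x_1,x_2)$. The only cosmetic difference is how the two bounds are combined to produce the H\"older modulus: the paper splits the series at a cutoff $N=\lfloor(k-1)K\rfloor$ determined by $d(x,y)$, applies the Lipschitz bound to the head and the uniform bound to the tail, and then does the arithmetic; you instead interpolate $\min(a,b)\le a^{1-\beta}b^\beta$ term by term and sum the resulting geometric series. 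These two devices are standard equivalents, yielding the same exponent. One small remark: the cleanest bookkeeping uses the inequality $\lambda_+<(\lambda-\delta)^\eta$ (built into the choice of $\delta$ and $\cU$) directly to verify $(1+\beta)\log\lambda_+<(1-\beta)\log\mu_-$ at $\beta=k-1=\frac{1}{2\eta+1}$ with strict inequality; your detour through $\alpha$ and the $\delta\to0$ limit reaches the same conclusion but relies on the strict slack $\frac{1}{2\eta+1}<\frac{1}{2\alpha+1}$ to absorb the $\delta$-dependence, which is fine but slightly less transparent.
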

We postpone the proof of this lemma until the end of the section and complete elimination of this case first. 

Let $\phi_i=\log Jf_i$. Then we have $(\phi_1,f_1)\equiv (\phi_2,f_2)$ and because PCFs are cohomology invariants (see Section~\ref{sec_33}) we have the matching relations
$$
\rho_{a,b}^{\phi_1}=\rho_{h(a), h(b)}^{\phi_2}\circ h|_{W^u_{f_1}(a)} 
$$
By Lemma~\ref{lemma_52} both $\rho_{a,b}^{\phi_1}$ and $\rho_{h(a), h(b)}$ are $C^k$. Hence $(\rho_{a,b}^{\phi_1}, \rho_{h(a), h(b)})\in V_1^k$ and item~6 of Theorem~\ref{thm_tech2} applies and yields vanishing of $\rho_{a,b}^{\phi_1}$ on $W_{1,loc}(a)=W^u_{f_1,loc}$. Now exactly the same arguments as in Section~\ref{sec_case1} apply and give vanishing of all null-homotopic PCFs for $\phi_1$, which, by Proposition~\ref{prop_pcf}, gives that $\phi_1=\log Jf_1$ is an almost coboundary contradicting to $f_1\in\cV$ again.

Thus it remains to prove Lemma~\ref{lemma_52}.

\subsection{Proof of Lemma~\ref{lemma_52}} While the setting is different (real eigenvalues rather than complex) the proof of Lemma~\ref{lemma1} with $\kappa=1$ applies without any change to give uniform $C^1$ regularity of $\rho_{a,b}^\phi$. Hence we need to gain additional regularity by establishing H\"older property of $D\rho_{a,b}^\phi$ with H\"older exponent $k-1=1/({2\eta+1})$.

Recall that $\rho_{a,b}^\phi(x)$ is a sum of four terms, the first being a constant and the third one easily seen to be smooth. The forth term has the same nature as the third one, but precomposed with $Hol_{a,b}$. The stable holonomy map is $C^{b_s(f)}$ with $b_s(f)>2$ and, hence, this term is also at least $C^2$. The second term
$$
\xi_{a,b}^\phi(x)=\sum_{n\ge 0}\phi(f^n(x))-\phi(f^n(Hol_{a,b}(x)))
$$
 is the difficult one and from the proof of Lemma~\ref{lemma1} we have that it is $C^1$ with
 $D\xi_{a,b}^\phi(x)$ given by
 $$
 \sum_{n\ge 0}D_u\phi(f^n(x))D_uf^n(x)-D_u\phi(f^n(Hol_{a,b}(x)))D_uf^n(Hol_{a,b}(x))D\,Hol_{a,b}(x)
$$
Moreover we have an exponential estimate $C_1(\mu_-^{-1}\lambda_+)^n$ on the absolute value of the $n$-th term of the series (recall that $\kappa=1$), which we can further bound using our definition of $\cU$
$$
C(\mu_-^{-1}\lambda_+)^n<C(\lambda-\delta)^{-n}
$$

Now pick two close-by points $x$ and $y\in W^u_{f_1,loc}(x)$. We will write $d$ for the metric induced by the Riemannian metric on the unstable leaves. Let $K$ be the integer which satisfies
$$
(\lambda-\delta)^{-K}<d(x,y)\le (\lambda-\delta)^{-K+1}
$$
Let $N=\lfloor(k-1)K\rfloor$ and we begin to estimate
\begin{multline*}
|D\xi_{a,b}^\phi(x)-D\xi_{a,b}^\phi(y)|\le  \sum_{n= 0}^N |D_u\phi(f^n(x))D_uf^n(x)-D_u\phi(f^n(y))D_uf^n(y)|\\
+\sum_{n= 0}^N |D_u\phi(f^n(x'))D_uf^n(x')D\,Hol_{a,b}(x)-D_u\phi(f^n(y'))D_uf^n(y')D\,Hol_{a,b}(y)|\\
+\sum_{n>N}|D_u\phi(f^n(x))D_uf^n(x)-D_u\phi(f^n(Hol_{a,b}(x)))D_uf^n(Hol_{a,b}(x))D\,Hol_{a,b}(x)|\\
+\sum_{n>N}|D_u\phi(f^n(y))D_uf^n(y)-D_u\phi(f^n(Hol_{a,b}(y)))D_uf^n(Hol_{a,b}(y))D\,Hol_{a,b}(y)|
\end{multline*}
where we write $x'$ and $y'$ for $Hol_{a,b}(x)$ and $Hol_{a,b}(y)$, respectively.
We can estimate the last two sums using the above bound on the tail of the series. Namely, they are bounded by $C_1(\lambda-\delta)^{-N}$.

We proceed to estimate the terms of the first sum using the triangle inequality and Lipschitz property of $D_u\phi$ along unstable leaves.
\begin{multline*}
|D_u\phi(f^n(x))D_uf^n(x)-D_u\phi(f^n(y))D_uf^n(y)|\\
\le |D_u\phi(f^n(x))D_uf^n(x)-D_u\phi(f^n(y))D_uf^n(x)|\\
+|D_u\phi(f^n(y))D_uf^n(x)-D_u\phi(f^n(y))D_uf^n(y)|\\
\le C\lambda_+^n|D_u\phi(f^n(x))-D_u\phi(f^n(y))|+C_2|D_uf^n(x)-D_uf^n(y)|\\
\le C_3\lambda_+^n d(f^n(x), f^n(y))+C_2|D_uf^n(x)-D_uf^n(y)|\\
\le C_4 \lambda_+^{2n} d(x,y)+ C_2C_5\lambda_+^{2n}d(x,y)\le C_6 \lambda_+^{2n}d(x,y)
\end{multline*}
where the estimate $|D_uf^n(x)-D_uf^n(y)|\le C_5\lambda_+^{2n}d(x,y)$ can be easily established by induction on $n$. Summing up to $N$ we  obtain
$$
\sum_{n= 0}^N |D_u\phi(f^n(x))D_uf^n(x)-D_u\phi(f^n(y))D_uf^n(y)|\le C_7\lambda_+^{2N}
$$

The terms of the second sum can be bounded in a similar way
\begin{multline*}
|D_u\phi(f^n(x'))D_uf^n(x')D\,Hol_{a,b}(x)-D_u\phi(f^n(y'))D_uf^n(y')D\,Hol_{a,b}(y)|\\
\le |D_u\phi(f^n(x'))D_uf^n(x')D\,Hol_{a,b}(x)-D_u\phi(f^n(y'))D_uf^n(y')D\,Hol_{a,b}(x)|\\
+|D_u\phi(f^n(y'))D_uf^n(y')D\,Hol_{a,b}(x)-D_u\phi(f^n(y'))D_uf^n(y')D\,Hol_{a,b}(y)|\\
\le C_8  |D_u\phi(f^n(x'))D_uf^n(x')-D_u\phi(f^n(y'))D_uf^n(y')|\\
+C_9\lambda_+^n|D\,Hol_{a,b}(x)-D\,Hol_{a,b}(y)|\
\end{multline*}
To estimate the first summand has exactly the same nature as the one we have estimated for the first sum, for the second summand we recall that $Hol_{a,b}$ is $C^2$ and we can use Lipschitz property of $D\,Hol_{a,b}$. Hence
\begin{multline*}
|D_u\phi(f^n(x'))D_uf^n(x')D\,Hol_{a,b}(x)-D_u\phi(f^n(y'))D_uf^n(y')D\,Hol_{a,b}(y)|\\
\le C_6\lambda_+^{2n}d(x',y')+C_{10}\lambda_+^nd(x,y)\le C_{11}\lambda_+^{2n} d(x,y)
\end{multline*}
Hence, summing up to $N$ we have exact same bound $C\lambda^{2N}$ for the second sum. Thus, putting all that together we have
\begin{multline*}
|D\xi_{a,b}^\phi(x)-D\xi_{a,b}^\phi(y)|\le  C_{12}\lambda_+^{2N} d(x,y)+C_1(\lambda-\delta)^{-N}\\
\le C_{12}(\lambda+\delta)^{2N} d(x,y)^{2-k}d(x,y)^{k-1}+C_{13} d(x,y)^{k-1}\\
\le (C_{12}(\lambda+\delta)^{2N}(\lambda-\delta)^{(2-k)(1-K)}  + C_{13})d(x,y)^{k-1}\\
\le (C_{14}(\lambda-\delta)^{2N\frac\eta\alpha}(\lambda-\delta)^{-N\frac{2-k}{k-1}}  + C_{13})d(x,y)^{k-1}
\end{multline*}
Hence, to establish the H\"older property it remains to show that $\frac{2-k}{k-1}-2\frac\eta\alpha>0$. Recall that $\frac{1}{k-1}=2\eta+1$ and that $\alpha>1$. So, indeed, we have
$$
\frac{2-k}{k-1}-2\frac\eta\alpha=2\eta-\frac{2\eta}{\alpha}>0
$$

\section{Proof of Theorem~\ref{thm_codim1}} 

\subsection{The setting} We begin by explaining how the choice of the neighborhood $\cU$ and the open dense $\cV$ is made.

Let $\xi_1<\xi_2<\ldots <\xi_l$ be the absolute values of unstable eigenvalues of $L$ and let $\mu^{-1}$ be the absolute value of the stable eigenvalue. Clearly $\mu>\xi_l$. By the definition of the stable bunching parameter we have
$$
b^s(L)=\frac{\log \xi_1+\log\mu}{\log\xi_l}>1
$$
because $\mu>\xi_l$. The bunching condition in Theorem~\ref{thm_codim1} can be rewritten as
$$
\mu^{b^s(L)-1}>\xi_l
$$
The unstable bunching parameter is given by
$$
b^u(L)=1+\frac{\log \xi_1}{\log\mu}>1
$$
By our assumption the subbundle $E_L^j$ corresponding to the eigenvalues with absolute value $\xi_j$ is either one- or two-dimesional. Then for sufficiently $C^1$-small perturbations $f$ this dominated splitting survives
$$
E_f^u=E_f^1\oplus E_f^2\oplus\ldots \oplus E_f^l
$$
Then neighborhood $\cU$ is chosen such that $f\in\cU$ are Anosov diffeomorphisms admitting the above dominated splitting for $E_f^u$, with $b^s(f)>1$ and $b^u(f)>1$, and such that $\mu^{b^s(f)-1}>\xi_l$.

Denote by $J^jf$ the Jacobian of $Df|_{E_f^j}$ and define $\cV$ in the following way.
$$
\cV=\{f\in\cU:  \log Jf, \log J^1f, \ldots \log J^lf\,\,\,\, \mbox{are not cohomologous to constants}\}
$$ 
Note that $J^jf$ not being cohomologous to a constant is indeed an open condition. Indeed, the function $\log J^jf$ is not cohomologous to a constant if and only if there exists a pair of periodic points with different sums over the smallest common period, which is an open property in $C^1$ topology. Note that if $\log Jf$ is cohomologous to a constant then $f$ is volume preserving and the constant has to be equal to 1. Also it shouldn't be very difficult to show that if $\log J^jf$ is cohomologous to a constant then this constant has to be equal to $\log J^jL$, but we won't need this.

\subsection{Outline of the proof} We apply Theorem~\ref{thm_tech2} to $f_1$ and $f_2$ with $\kappa=1$. Because $\kappa=1$, $b^u(f_i)>1$ and $\mu>\xi_l$ the assumption of Theorem~\ref{thm_tech2} boils down to
$$
\mu^{b^s(f_i)-1}>\xi_l
$$
which is satisfied according to our choice of the neighborhood $\cU$.

Theorem~\ref{thm_tech2} yields distributions $E_i\subset W_{f_i}^u$ and corresponding foliations $W_i$. If $\dim E_i=0$ then corresponding foliations are foliations by points and from conclusion of Theorem~\ref{thm_tech2} we obtain that $h$ is uniformly $C^1$ along the unstable foliation. In this case we have that $h$ matches all the intermediate distributions (because they are characterized by the speed) and one can conclude from this (see~\cite{GKS}) that $h$ is, in fact, $C^{1+\eps}$ smooth along the unstable foliation for some $\eps>0$. Matching of the stable Jacobians give smoothness along the 1-dimensional stable foliation and then Theorem~\ref{thm_codim1} follows by applying the Journ\'e Lemma.

Hence we need to rule out the case when $\dim E_i>0$. From matching of full Jacobians we have $(f_1, Jf_1)\sim (f_2, Jf_2)$. Also note that $Jf_i $ are $C^2$ functions. Hence, item~6 of Theorem~\ref{thm_tech2} applies to $Jf_i$ and we have vanishing of simple PCFs $\rho^{Jf_i}_{a,b}$ along the leaves of $W_i$. If $\dim E_i=\dim E^u_{f_i}$ then we have vanishing $\rho^{Jf_i}_{a,b}$ on local unstable leaves, which implies, by Proposition~\ref{prop_pcf}, that $Jf_i$ are cohomologous to $1$. But this means that $f_i$ are volume preserving contradicting our choice of $\cV$. 

Thus it remains to address the case when $\dim E_i\in [1, \dim E^u_{f_i}-1]$ (recall that $\dim E_1=\dim E_2$). Because $E_i$ is $Df_i$-invariant it must be a direct sum of some sub-collection of $E^j_{f_i}$.
\begin{remark} These sub-collections must be the same for $E_1$ and for $E_2$, which can seen from the fact that $h(W_1)=W_2$ and both $f_1$ and $f_2$ are $C^1$ close to $L$. 
\end{remark}
However, we will not need the above remark for the proof as we will proceed working solely with $f_1$. Namely, we will show that, in this case $f_1\notin\cV$ which yields a contradiction. We can lighten notation now by writing $f$ for $f_1$, $E$ for $E_1$, etc. We will split the argument into two sub-cases. The first case is when $E^1_f\subset E$ and the second one is when $E^1_f\nsubset E$.
Both will be treated using similar arguments relying on an idea from~\cite{G3} which allows to obtain constancy of periodic data transversely to a Lipschitz foliation, yet there are a few differences. 

\subsection{Case I: when $E^1_f\subset E$}
Denote by $j$ the smallest index such that $E_f^j\nsubset E$. Such $j$ exists because $\dim E<\dim E^u_f$. Denote by $W^{j-1,u}$ and $W^{j,u}$ the integral foliations of $E_f^1\oplus E_f^2\oplus\ldots \oplus E_f^{j-1}$ and $E_f^1\oplus E_f^2\oplus\ldots \oplus E_f^j$, respectively. From definition of $j$ it is clear that
$W^{j-1,u}=W\cap W^{j,u}$. Recall that by Theorem~\ref{thm_tech2} foliation $W$ is $C^1$ inside of the leaves of $W_f^u$. This implies that $W^{j-1,u}$ is $C^1$ inside $W^{j,u}$. Indeed this is easy to see by looking at holonomy maps. If $Hol: P_1\to P_2$ is a holonomy of $W^{j-1,u}$ between two transversals $P_1$ and $P_2$ in $W^{j,u}$, then these transversals can be embedded into transversals to $W$ inside $W^u_f$. In this way $Hol$ becomes a restriction of a holomomy map for $W$ and, hence, it is $C^1$.
\begin{lemma}
\label{lemma_lipschitz}
 If $W^{j-1,u}$ is Lipschitz inside $W^{j,u}$ then the Jacobian $J^jf$ is cohomologous to constant.
\end{lemma}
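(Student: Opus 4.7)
The plan is to adapt the argument of \cite[Lemma~3]{G3} (see also \cite{GShi,GS}), where the analogous statement is proved for a weak unstable sub-foliation of the unstable foliation in dimension three. Here $E_f^j$ plays the role of the transverse direction to $W^{j-1,u}$ inside $W^{j,u}$, and the Jacobian of $f$ on this quotient is exactly $J^jf$.

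First I would translate the Lipschitz hypothesis into a quantitative statement about the $W^{j-1,u}$-holonomies inside $W^{j,u}$. The local holonomy maps $H_{y,z}\colon T_y \to T_z$ between $C^1$ transversals to $W^{j-1,u}$ contained in a single $W^{j,u}$-leaf are bi-Lipschitz with uniform constants on bounded scales, so by Rademacher's theorem their Jacobians $JH_{y,z}$ exist a.e.\ and satisfy $|\log JH_{y,z}| \le \log L$ uniformly. The $f$-invariance of both foliations and of the dominated splitting gives the equivariance
\[
f \circ H_{y,z} = H_{fy,fz} \circ f,
\]
and taking Jacobians along the transverse direction $E_f^j$ and iterating yields the telescoping identity
\[
\sum_{i=0}^{n-1}\bigl(\log J^jf(f^i x) - \log J^jf(f^i H_{y,z}(x))\bigr) = \log JH_{y,z}(x) - \log JH_{f^n y, f^n z}(f^n x),
\]
valid a.e.\ on $T_y$, whose right-hand side is bounded by $2\log L$ independently of $n$.

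Next I would use this uniform boundedness together with the Anosov closing lemma to promote this bounded cocycle relation to equality of periodic data for $\log J^jf$. Given a periodic point $p = f^n p$ and a neighboring point $H_{y,z}(p)$, the closing lemma produces a periodic point $q$ of period $n$ that shadows $H_{y,z}(p)$ closely; applying the identity above along many iterates of $f^n$ and dividing by the number of periods forces the normalized periodic sums of $\log J^jf$ at $p$ and $q$ to coincide. Transitivity of $f$ together with minimality of $W^s$ then propagates this equality to all pairs of periodic points, and Livshits' theorem concludes that $\log J^jf$ is cohomologous to a constant.

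The main obstacle will be the last step: ensuring that the bounded error in the iterated cocycle identity is actually absorbed when normalizing by period, and that the propagation via $W^{j,u}$-paths and stable holonomies indeed covers all pairs of periodic points with matching periods. This is the technical heart of \cite{G3,GShi}, and I expect the arguments there to transfer to the present setting with only notational changes, since the proof uses only Lipschitz regularity of the sub-foliation, $f$-equivariance, and the standard closing lemma, all of which are available in our dominated splitting setup.
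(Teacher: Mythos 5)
You correctly identify that the proof should adapt the mechanism of \cite[Lemma~3]{G3}, but the argument you sketch is not the one used there or in the paper, and as written it has a gap in its central step.

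The paper's proof does not proceed through a bounded cocycle identity for the holonomy Jacobian followed by a closing-lemma normalization. Instead it is a direct contradiction argument comparing \emph{backward} contraction rates of $d^{su}$-distances. One assumes the larger eigenvalue modulus $\lambda_p$ of $Df^{n_p}|_{E^j_f(p)}$ is non-constant, picks periodic points $a,b$ with $\lambda_a$ close to $\lambda_+$ and $\lambda_b+\delta<\lambda_+$, uses Hartman's linearization at $a$ to locate a point $\tilde a$ in the $E^j_f$-direction that contracts backward at exactly rate $\lambda_a^{-1}$, and uses the fact that $W^{j-1,u}$ is conjugated to a minimal linear foliation (via the topological conjugacy $H_f$, giving a polynomial density estimate for large $W^{j-1,u}$-disks) to transport $(a,\tilde a)$ by $W^{j-1,u}$-holonomy inside $W^{j,u}$ to a pair $(c,\tilde c)$ with $c$ very close to $b$. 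H\"older continuity of the dominated splitting then forces a positive proportion $\gamma$ of the backward orbit of $c$ to stay near the orbit of $b$, so $d^{su}(f^{-N}c,f^{-N}\tilde c)$ contracts at a strictly slower rate than $d^{su}(f^{-N}a,f^{-N}\tilde a)$. Since $(f^{-N}a,f^{-N}\tilde a)$ and $(f^{-N}c,f^{-N}\tilde c)$ remain related by a bounded-length $W^{j-1,u}$-holonomy, the ratio of distances tends to $0$, contradicting Lipschitz regularity. A symmetric argument handles the smaller modulus $\mu_p$, and since $E^j_f$ is at most two-dimensional this gives constancy of $J^jf$ at periodic points, hence the cohomology statement by Livshits.

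The gap in your version is the closing-lemma step. Given a periodic point $p=f^np$, the image $H_{y,z}(p)$ lies in $W^{j,u}(p)\subset W^u_f(p)$, so its forward orbit diverges monotonically from the orbit of $p$ in the $E^j_f$-direction and is not an approximately recurrent orbit segment; the closing lemma therefore does not produce a period-$n$ point $q$ shadowing $H_{y,z}(p)$. Even if one had such a $q$, it would be a periodic point \emph{near} $p$, so dividing the bounded cocycle identity by $n$ only shows that nearby (in the $E^j_f$-direction) periodic points have close normalized sums — it does not by itself compare two arbitrary periodic orbits. The propagation you appeal to ("transitivity together with minimality of $W^s$") is also not the relevant mechanism: the key structural input is minimality of the \emph{sub-foliation} $W^{j-1,u}$, established through the conjugacy to $L$, which the paper uses explicitly and your argument does not. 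Finally, you do not address the two-dimensional case of $E^j_f$, which is the genuinely new point here relative to \cite{G3} and is why the paper needs Hartman's linearization to single out the invariant curve of the slower or faster eigendirection.
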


Note the lemma immediately implies that $f$ is not in $\cV$ by the definition of $\cV$, which rules out Case I. Hence it remains to prove the lemma.

\begin{proof} Recall that the subbundle $E^j_f$ is either 1-dimensional or 2-dimensional by the generic assumption on the model $L$. When $\dim E^j_f=1$ this lemma was proved in~\cite{G3}. (In the setting of~\cite{G3} the foliation $W^{j-1,u}$ was also one dimensional, but this is not affecting the argument as was remarked in~\cite{G3}.) Thus we will focus on the case when $\dim E^j_f=2$ in which case the idea is the same, but the argument has to be modified. We will be  brief when explaining the steps which are exactly the same as in~\cite{G3}.

\begin{remark} In fact, a more difficult argument work to establish rigidity of periodic data transversely to a Lipschitz foliation even when the codimension is greater than 2  (in preparation,~\cite{GS}). 
\end{remark}

For the proof of the lemma we rename foliations now in order to avoid having so many decorations: $\cT=W^{j,u}$, $W^{wu}=W^{j-1,u}$ and $E^{wu}=E_f^1\oplus E_f^2\oplus\ldots \oplus E_f^{j-1}$ (here $wu$ stand for ``weak unstable"). Also denote by $W^{su}$ the integral foliation of $E^{j,u}_f$. Then recall that we have 
\begin{enumerate}
\item
 $W^{wu}$ and $W^{su}$ are transverse sub-foliations of $\cT$;
 \item  $W^{wu}$ is conjugated to a minimal linear $L$-invariant foliation (this is standard fact for weak unstable foliations of perturbations of $L$); 
 \item $W^{wu}$ is $C^1$ in $\cT$.
 \end{enumerate}
 Using these three properties we will show that that $J^{su}$, the Jacobian of $Df|_{E^{su}}$ is constant. By Livshits theorem, it is sufficient to show that $J^{su}$ is constant on periodic points, which is what we are going to do.
 
Let $p$ be a periodic point, $f^{n_p}(p)=p$. Denote by $\lambda_p^{n_p}$ and $\mu_p^{n_p}$ the absolute values of the eigenvalues of $Df^{n_p}|_{E^{su}(p)}$. We assume that $\lambda_p\ge\mu_p$. We will show that $\lambda_p$ is a constant, which does not depend on $p$. Then an analogous argument would yield that $\mu_p$ is independent of $p$. Because $E^{su}$ is 2-dimensional we have  that Jacobian of $Df^{n_p}|_{E^{su}(p)}$  equals to $(\lambda_p\mu_p)^{n_p}$ and, hence $J^{su}$ has constant periodic data completing the proof of the lemma.

Let $\lambda_-=\inf_p\lambda_p$ and $\lambda_+=\sup_p\lambda_p$. If $\lambda_-=\lambda_+$ then we are done, so assume that $\lambda_-<\lambda_+$. Given any $\eps>0$ consider an adapted Riemannian metric  for which we have
$$
\lambda_--\eps<\frac{\|Df(v)\|}{\|v\|}<\lambda_++\eps, \,\,v\in E^{su}
$$

Now pick a periodic point $b$ with $\lambda_b<\lambda_+$ and a small $\delta>0$ such that $\lambda_b+\delta<\lambda_+$. Next we will pick a periodic point $a$ with $\lambda_a>\lambda_b+\delta$ very close to $\lambda_+$. Namely, we need that
$$
\nu=\frac{(\lambda_b+\delta)^\gamma (\lambda_++\eps)^{1-\gamma}}{\lambda_a}<1
$$
For any $\gamma>0$ we can choose an $\eps>0$ such that the above inequality holds. The choice of the value of $\gamma>0$ will be explained later.

By Hartman's Theorem~\cite{H} the contraction $f^{-n_a}\colon W^{su}_{f, loc}(a)\to W^{su}_{f, loc}(a)$ is $C^1$-linearizable. Hence there is an invariant submanifold of $W^{su}_{f, loc}(a)$ corresponding to the eigenvalue with absolute $\lambda_a$ (if the eigenvalue is complex then this submanifold is the whole $W^{su}_{f, loc}(a)$, otherwise it is 1-dimensional). If $\tilde a$ is a point on this submanifold then, using the linearization at $a$, we have,
$$
d^{su}(f^{-k}(a), f^{-k}(\tilde a))\le C \lambda_a^{-k} d^{su}(a,\tilde a),\, k\ge 0
$$
Here $d^{su}$ refers to the intrinsic metric on the leaves of $W^{su}_f$ induced by the Riemannian metric.

Consider a large disk $\cD^{wu}(D)\subset W^{wu}_f(a)$, $D>0$, given by
$$
\cD^{wu}(D)=\{x\in W^{wu}_f(a): d^{su}(x,a)<D\}
$$
Let $H_f$ be the conjugacy to the linear model $H_f\circ f=L\circ H_f$. Then we have $H_f(\cD^{wu}(D))\subset W_L^{wu}(H_f(a))$. Foliation $W_L^{wu}$ is an irrational Diophantine foliation, which implies that $H_f(\cD^{wu}(D))$ is $C\, D^{-\beta}$-dense in $\T^d$ for some $\beta>0$ and $C>0$ which is independent of $D$. Since $H_f$ is  bi-H\"older continuous, the same is true on the non-linear side --- $\cD^{wu}(D)$ is $C\, D^{-\alpha}$-dense for some $\alpha>0$ and $C>0$ which are independent of $D$.

Pick a point $c\in \cD^{wu}(D)$ which is $C\, D^{-\alpha}$ close to $b$. We now explain the choice of point $\tilde a$ on the submanifold of $W^{su}_{f, loc}(a)$ corresponding to the eigenvalue with absolute value $\lambda_a$. If $\theta$ is the H\"older exponent of subbundle $E^{su}$ then for points $y$ in a small neighborhood $B(\mathcal O(b), 2K\delta^{\frac1\theta})$ of the orbit of $b$,  we will have $\|Df|_{E^{su}(y)}\|<\lambda_b+\delta$. (Recall that the number $\delta$ was already chosen.)
Point $c$ is extremely close to $b$ for all large $D$ and we pick a point $\tilde c\in W^{su}_{f,loc}(c)$ such that it satisfies the following  properties:
\begin{enumerate}
\item $d^{su}(c,\tilde c)\ge K\delta^{\frac1\theta}$
\item $\tilde c=W^{su}_{f,loc} (c)\cap W_f^{wu}(\tilde a)$,  that is, point $\tilde c$ is related to a point point $\tilde a \in W^{su}_{f, loc}(a)$ via a $W_f^{wu}$-holonomy, where $\tilde a$ belongs to the invariant submanifold of $W^{su}_{f, loc}(a)$ corresponding to the eigenvalue with absolute value $\lambda_a$.
\end{enumerate}

We have that the pair of points $(c,\tilde c)$ is an image of the pair $(a,\tilde a)$ under a long weak-unstable holonomy inside $\cT$. We will iterate this quadruple of points $N$ times in negative time, where $N$ is chosen in such a way that $d^{wu}(f^{-N}(a), f^{-N}(c))\approx 1$. Then we have that $\log D\simeq N$, and, because $dist(c,b)\le C\, D^{-\alpha}$ we can conclude that there is a positive proportion $\gamma\in(0,1)$ of the orbit $c, f^{-1}(c), \ldots f^{-N}(c)$ such that the iterates $c, f^{-1}(c), \ldots f^{-\lfloor\gamma N\rfloor}(c)$ remain  $K\delta^{\frac1\theta}$-close to the orbit of $b$. By our choice of point $\tilde c$, the orbit of $\tilde c$ also stays $K\delta^{\frac1\theta}$-close to the orbit of $b$ and using the that $\|Df|_{E^{su}(y)}\|<\lambda_b+\delta$ we can estimate
$$
d^{su}(f^{-\lfloor\gamma N\rfloor}(c), f^{-\lfloor\gamma N\rfloor}(\tilde c))\ge (\lambda_b+\delta)^{-\lfloor\gamma N\rfloor} d^{su}(c, \tilde c)
$$
The number $\gamma$ depends on $\alpha$ and the expansion/contraction rates, but is independent of $D$ and, hence, $N$ and also is independent of $\delta$, (for all sufficiently large $D$). (The precise of the value of constant $\gamma$ can be calculated in terms of $\alpha$ and hyperbolicity rates; such calculation appears in~\cite{G3}, however we only need to know that $\gamma>0$.). 

\begin{figure}
  \includegraphics[width=\linewidth]{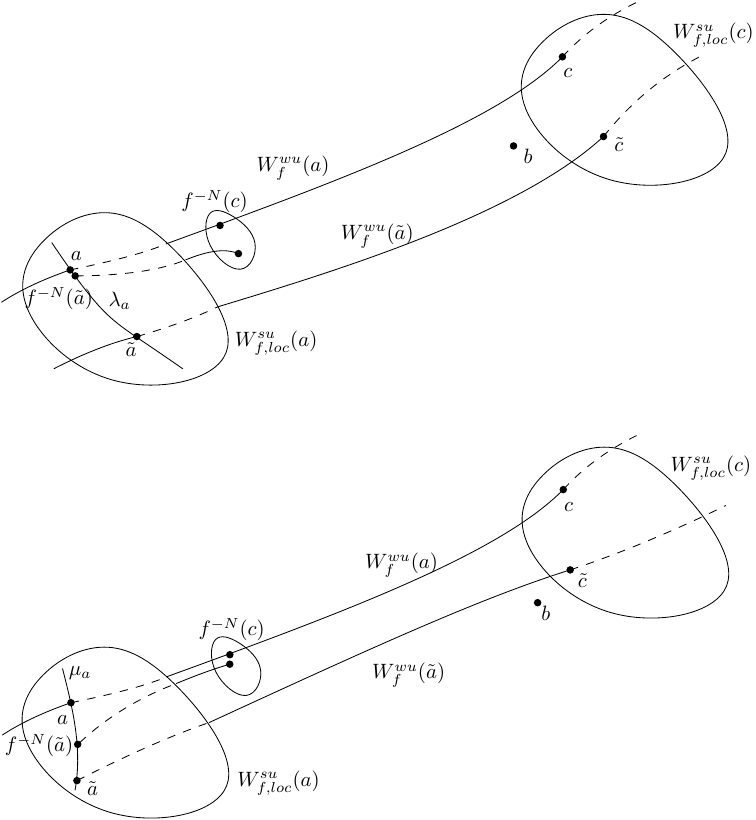}
  \caption{Point configuration.}
\end{figure}

Our goal now is to show that the ratio
$$
\frac{d^{su}(f^{-N}(a), f^{-N}(\tilde a))} {d^{su}(f^{-N}(c), f^{-N}(\tilde c))}
$$
can be arbitrarily small. To show that we need one more observation.
$$
d^{su}(a, \tilde a)\le c_0(\delta)
$$
where $c_0(\delta)$ is a constant which is independent of $D$ and $N$. Indeed,  this follows from $d^{su}(c,\tilde c)\ge K\delta^{\frac1\theta}$ and the fact that the foliation $W^{wu}_f$ is conjugate to a linear foliation via a map $H_f$ which is close to $id_{\T^d}$. 

We have 
\begin{multline*}
\frac{d^{su}(f^{-N}(a), f^{-N}(\tilde a))} {d^{su}(f^{-N}(c), f^{-N}(\tilde c))}\le \frac{C\lambda_a^{-N}d^{su}(a,\tilde a)}{(\lambda_b+\delta)^{-\lfloor\gamma N\rfloor}(\lambda_++\eps)^{\lfloor\gamma N\rfloor-N} d^{su}(c,\tilde c)}\\
\le \frac{Cc_0(\delta)}{K\delta^{1/\theta}}\left(\frac{(\lambda_b+\delta)^{\frac{\lfloor\gamma N\rfloor}{N}}(\lambda_++\eps)^{1-\frac{\lfloor\gamma N\rfloor}{N}}}{\lambda_a}\right)^N
\le  \frac{Cc_0(\delta)}{K\delta^{1/\theta}}\frac{\lambda_++\eps}{\lambda_b}\nu^N\to 0,\,\, N\to \infty
\end{multline*}
Recalling that $d^{wu}(f^{-N}(a), f^{-N}(c))\approx 1$ we conclude that $W^{wu}$-holonomy is not Lipschitz inside $\cT$, which yields a contradiction.

A very similar argument works for showing that the smaller multiplier $\mu_p$ is a constant independent of periodic point $p$. We illustrate it in the bottom half of Figure~3. Define $\mu_-=\inf_p\mu_p$ and $\mu_+=\sup_p\mu_p$. Then if $\mu_+>\mu_-$ we can obtain a contradiction is a very similar way. Namely we will pick periodic points $a$ and $b$ such that
$$
\mu_-\lesssim \mu_a<\mu_b-\delta<\mu_b
$$
Then the points $\tilde a$, $c$ and $\tilde c$ can be arranged in the same way as before, but requiring that $\tilde a$ is on the ``weakest" invariant submanifold of $f^{n_a}\colon W^{wu}_{f,loc}\to W^{wu}_{f,loc}$ corresponding to $\mu_a$, so that
$$
d^{su}(f^{-i}(a),f^{-i}(\tilde a))\ge C\mu_a^{-1}d^{su}(a,\tilde a)
$$
(In the case when $\mu_a=\lambda_a$ there is no ``weakest'' submanifold and $\tilde a$ does not have to be chosen in any special way in order to have desired control.) Using the same estimates as before we would have that the ratio
$$
\frac{d^{su}(f^{-N}(a), f^{-N}(\tilde a))} {d^{su}(f^{-N}(c), f^{-N}(\tilde c))}
$$
is arbitrarily large contradicting the Lipschitz property again. As we mentioned already $\lambda_p$ and $\mu_p$ being constant implies that  the Jacobian $J^{su}$ is cohomologous to a constant.
\end{proof}

\subsection{Case II: when $E^1_f\nsubset E$} In this case let $j$ be the smallest index such that $E_f^j\subset E$. Such $j$ exists because $E$ is a non-trivial subbundle. If we denote by $\cF^j_f$ the integral foliation of $E_f^j$ then, from definition of $j$ we have $\cF^j_f= W\cap W^{j,u}_f$. Because both foliations $W$ and $W^{j,u}_f$ integrate jointly with $W_f^s$, so does their intersection $\cF^j_f$.

\begin{lemma}
\label{lemma_linear}
Let $H_f$ be the conjugacy to the linear model, $H_f\circ f=L\circ H_f$. If $\cF^j_f$ integrates jointly with $W^s_f$ then $H(\cF^j_f)$ is an $L$-invariant linear minimal foliation on $\T^d$.
\end{lemma}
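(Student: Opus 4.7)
The plan is to push the foliation $\cF^j_f \oplus W^s_f$ forward by $H_f$, pass to the universal cover, quotient by the linear stable subspace, and show that the resulting topological foliation on the unstable subspace $E^u_L$ is invariant under a dense set of translations and hence is linear.

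First, set $\cF^{ws} \defin \cF^j_f \oplus W^s_f$, which is an $f$-invariant topological foliation by the joint integrability hypothesis. Applying the conjugacy, $\cG \defin H_f(\cF^{ws})$ is an $L$-invariant topological foliation on $\T^d$ whose leaves are saturated by leaves of $W^s_L$, since $H_f$ maps $W^s_f$ to $W^s_L$. Lift to $\R^d$; because each leaf of the lifted foliation $\tilde\cG$ is $E^s_L$-saturated, it descends through the projection $p\colon\R^d \to E^u_L$ along $E^s_L$ to a well-defined topological foliation $\cF'$ on $E^u_L$. The foliation $\cF'$ inherits invariance under the expanding linear map $L|_{E^u_L}$ (from the $L$-invariance of $\tilde\cG$) and under translations by elements of $p(\Z^d)$ (from the fact that $\tilde\cG$ descends to $\T^d$).

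The key step is density. Irreducibility of $L$ over $\Q$, automatic in the codimension one setting by the remark following Theorem~\ref{thm_codim1}, implies that $E^s_L$ contains no nonzero rational vector; otherwise the $\Q$-span of $E^s_L\cap\Q^d$ would be a proper nonzero $L$-invariant rational subspace. This forces $p(\Z^d)$ to be dense in $E^u_L$. A topological foliation of $E^u_L$ invariant under a dense set of translations must be invariant under every translation of $E^u_L$, by continuous dependence of plaques on their base points, and a translation-invariant topological foliation of a linear space is necessarily the foliation by parallel translates of a single linear subspace $V\subset E^u_L$. Unwinding the construction, $\tilde\cG$ is then the foliation of $\R^d$ by translates of $V\oplus E^s_L$, and intersecting with the $W^u_L$-leaves shows that $H_f(\cF^j_f)$ is precisely the foliation of $\T^d$ by translates of $V$.

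Finally, $V$ is $L$-invariant because $\cF^j_f$ is $f$-invariant. Arguing once more by irreducibility of $L$, the subspace $V$ contains no nonzero rational vector, so the linear foliation of $\T^d$ by translates of $V$ is minimal. The main obstacle I foresee is formalizing the continuity step which promotes invariance of $\cF'$ from the dense subgroup $p(\Z^d)$ to the full group of translations of $E^u_L$; this should follow from compactness of small plaques of $\cF'$ together with their Hausdorff-continuous dependence on base points, but some care is needed because $H_f$ is merely H\"older and $\cF'$ is a priori only a topological foliation.
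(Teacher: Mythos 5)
Your argument and the paper's reach the same place but differ in how they handle the hard step. The paper is short: it observes that $H_f(W^s_f\oplus\cF^j_f)$ is an $L$-invariant $C^0$ foliation subfoliated by the minimal linear foliation $W^s_L$, and then invokes the cited result~\cite[Lemma~2.1]{GS} (a higher-dimensional version of a lemma from~\cite{RGZ}) to conclude that any such foliation is linear; $H_f(\cF^j_f)$ is then the intersection of two linear foliations, and minimality comes from irreducibility. You instead attempt to reprove that cited lemma from scratch: lift to $\R^d$, project along $E^s_L$ to get a topological foliation $\cF'$ of $E^u_L$ invariant under $L|_{E^u_L}$ and under the dense subgroup $p(\Z^d)$, pass from dense to full translation invariance, and deduce that $\cF'$ is by parallel translates of a subspace. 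The decomposition is essentially the same one underlying the cited lemma, so conceptually you and the paper follow the same route; you just unwind the black box.

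The concern you raise at the end is exactly where the real work lies, and as written your sketch does not yet close it. Passing from invariance of $\cF'$ under a dense set of translations to invariance under all translations requires knowing that the group of foliation-preserving translations is closed, which in turn needs some uniform control on plaques of a merely topological foliation. You gesture at ``continuous dependence of plaques on base points'' and ``compactness of small plaques,'' but for a $C^0$ foliation obtained by projecting a H\"older image, this is not automatic and needs an argument (using, say, the compactness of the foliation charts on $\T^d$ and the local product structure inherited from $\cG$). This is precisely the content that~\cite[Lemma~2.1]{GS} is being cited to supply, so your proof currently has a gap at the point the paper deliberately delegates. A smaller issue: your justifications for density of $p(\Z^d)$ and for minimality of the final foliation are stated as ``$E^s_L$ (resp.\ $V$) contains no nonzero rational vector,'' but neither statement by itself yields the conclusion; the correct invocation is that any rational $L$-invariant subspace is trivial by irreducibility, applied to the smallest rational subspace containing $E^s_L$ (resp.\ $V$), which is itself $L$-invariant. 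Your conclusions are right but the stated reasons should be tightened.
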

\begin{proof}
Denote by $W^s_f\oplus\cF^j_f$ the foliation to which $W_f^s$ and $\cF_f^j$ integrate jointly. Then $H(W^s_f\oplus\cF^j_f)$ is an $L$-invariant $C^0$ foliation which is sub-foliated by a minimal linear foliation $W^s_L=H(W^s_f)$. Then $H(W^s_f\oplus\cF^j_f)$ must be linear as well by~\cite[Lemma 2.1]{GS} (which is a higher dimensional generalization of a lemma from~\cite{RGZ}). Now we can write $H(\cF^j_f)$ as an intersection of two linear foliations 
$H(\cF^j_f)=H(W^s_f\oplus\cF^j_f)\cap W^s_L$. Hence $H(\cF^j_f)$ is linear. Minimality follows easily from irreducibility of $L$.
\end{proof}

Now consider the integral foliation of $E_f^{j-1}\oplus E_f^j$ which we denote by $\cT$ by analogy with Case~I. Then, by the same observation which we used in Case~I, we have that $C^1$ regularity of $W$ inside $W^u_f$ implies $C^1$ regularity of the foliation $\cF^j_f$ when restricted to $\cT$.

\begin{lemma}
\label{lemma_const2} If $\cF^j_f$ is $C^1$ inside $\cT$ and $H(\cF^j_f)$ is linear then $J^{j-1}_f$ is cohomologous to a constant.
\end{lemma}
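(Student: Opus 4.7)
The proof follows the template of Lemma~\ref{lemma_lipschitz}, adapted to the reversed configuration: inside $\cT$, the Lipschitz sub-foliation $\cF^j_f$ now lies in the \emph{faster} direction (it integrates $E^j_f$), while the transversal direction inside $\cT$ is the \emph{slower} bundle $E^{j-1}_f$. Consequently I will iterate \emph{backward} in time rather than forward, and pick the auxiliary point $\tilde a$ along the submanifold corresponding to the \emph{largest} eigenvalue of $Df^{n_a}|_{E^{j-1}_f(a)}$, so that $f^{-n_a}$ contracts $\tilde a$ towards $a$ at the fastest available rate. By the Livshits theorem it suffices to prove that the moduli $\lambda_p$ of eigenvalues of $Df^{n_p}|_{E^{j-1}_f(p)}$ are constant in the periodic orbit $p$; when $\dim E^{j-1}_f=2$ one applies the argument first to the larger eigenvalue and then, with $\sup$ and $\inf$ swapped, to the smaller one, exactly as in Lemma~\ref{lemma_lipschitz}.

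Assume for contradiction $\lambda_+:=\sup_p\lambda_p>\inf_p\lambda_p$. Fix a periodic $b$ with $\lambda_b<\lambda_+-\delta_0$ and, for $\gamma\in(0,1)$ to be specified and small $\delta,\eps>0$, pick a periodic $a$ with $\lambda_a$ close enough to $\lambda_+$ that
\[
\nu:=\frac{(\lambda_b+\delta)^{\gamma}(\lambda_++\eps)^{1-\gamma}}{\lambda_a}<1.
\]
By Lemma~\ref{lemma_linear}, $H_f(\cF^j_f)$ is an $L$-invariant linear minimal foliation on $\T^d$; combining its Diophantine density with bi-H\"older regularity of $H_f$, any intrinsic disk $\cD\subset\cF^j_f(a)$ of radius $D$ is $CD^{-\alpha}$-dense in $\T^d$ for some $\alpha>0$. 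Pick $c\in\cD$ with $d(c,b)\le CD^{-\alpha}$; pick $\tilde a$ close to $a$ along the $f^{n_a}$-invariant submanifold inside $W^{j-1,u}_f(a)$ associated to $\lambda_a$ (via Hartman $C^1$-linearization of the contraction $f^{-n_a}$ on $W^{j-1,u}_f(a)$), and let $\tilde c$ be the image of $\tilde a$ under the $\cF^j_f$-holonomy between local $\cT$-transversals through $a$ and $c$. The $C^1$ hypothesis yields a uniform Lipschitz bound for this holonomy, hence $d^{E^{j-1}_f}(c,\tilde c)\asymp d^{E^{j-1}_f}(a,\tilde a)$.

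Iterate backward $N$ times with $D\asymp \xi_j^N$ so that $d^{\cF^j_f}(f^{-N}(a),f^{-N}(c))\asymp 1$. Since $d(c,b)\lesssim \xi_j^{-\alpha N}$ and $f^{-1}$ expands ambient distances by at most $\mu$ per step, the first $\lfloor\gamma N\rfloor$ backward iterates of $c$ lie in the $K\delta^{1/\theta}$-neighborhood of the backward orbit of $b$, with $\gamma=\alpha\log\xi_j/\log\mu>0$ (here $\theta$ is the H\"older exponent of $E^{j-1}_f$, and the shadowing estimate is parallel to Lemma~\ref{lemma_lipschitz} and~\cite{G3}). On this shadowing range $\|Df|_{E^{j-1}_f(y)}\|<\lambda_b+\delta$, so combining the Hartman estimate at $a$ with these expansion bounds gives
\[
\frac{d^{E^{j-1}_f}(f^{-N}(a),f^{-N}(\tilde a))}{d^{E^{j-1}_f}(f^{-N}(c),f^{-N}(\tilde c))}\le C\,\frac{\lambda_a^{-N}}{(\lambda_b+\delta)^{-\lfloor\gamma N\rfloor}(\lambda_++\eps)^{-N+\lfloor\gamma N\rfloor}}\,\frac{d^{E^{j-1}_f}(a,\tilde a)}{d^{E^{j-1}_f}(c,\tilde c)}\lesssim \nu^N\to 0,
\]
which contradicts the uniform Lipschitz constant of the $\cF^j_f$-holonomy inside $\cT$ at the $f^{-N}$-iterated scale (the bound is preserved because $f$ maps both $\cF^j_f$ and $\cT$ to themselves). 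The main technical point, exactly as in Lemma~\ref{lemma_lipschitz}, is the shadowing estimate for backward iterates of $c$; the essential structural novelty is only the swap of fast/slow roles of the sub-foliation inside $\cT$ and the resulting switch from forward to backward iteration.
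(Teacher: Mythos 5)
Your argument is substantively correct and reproduces the paper's approach, which proves Lemma~\ref{lemma_const2} by running the proof of Lemma~\ref{lemma_lipschitz} verbatim on the triple $\cF^j_f$, $\cF^{j-1}_f$, $\cT$. One framing error, though: you describe the change as a ``switch from forward to backward iteration,'' but the proof of Lemma~\ref{lemma_lipschitz} \emph{already} iterates in negative time; the paper's explicit observation is that whether the Lipschitz sub-foliation of $\cT$ is the fast or the slow one is irrelevant to the argument, and your own computations confirm this (only the numerical value of $\gamma$, which depends on the rate of the Lipschitz foliation, changes --- not the structure of the proof). There is also a notational slip: the Hartman linearization of $f^{-n_a}$ should be applied on the leaf of $\cF^{j-1}_f$ through $a$ (the integral manifold of $E^{j-1}_f$ alone), not on $W^{j-1,u}_f(a)$, which in the paper's notation is the integral manifold of $E^1_f\oplus\cdots\oplus E^{j-1}_f$.
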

The proof of this lemma is exactly the same as the proof of Lemma~\ref{lemma_lipschitz}. Indeed we have all the properties on which the proof builds upon:
\begin{enumerate}
 \item $\cF^j_f$  and $\cF^{j-1}_f$  are transverse sub-foliations of $\cT$;
 \item  $\cF^j_f$ is conjugated to a minimal linear $L$-invariant foliation by Lemma~\ref{lemma_linear}; 
 \item $\cF^j_f$ is $C^1$ in $\cT$.
 \end{enumerate}
 The only difference compared to the setting og Lemma~\ref{lemma_lipschitz} is the Lipschitz foliation is not the weak foliation in $\cT$, but the strong foliation $\cF^j_f$, however which foliation is faster was irrelevant for the proof of Lemma~\ref{lemma_lipschitz}.

Hence $J^{j-1}_f$ is cohomologous to a constant which means that $f$ is does not belong to the subset $\cV$, ruling out Case II as well, and, thus, finishing the proof of Theorem~\ref{thm_codim1}.

\subsection{Proof of Addendum~\ref{add}} The strategy for the proof is exactly the same but we cannot work with the full Jacobian anymore because it is cohomologous to a constant for volume preserving diffeomorphisms. Instead we can work with the stable Jacobian.

We define $\cU'$ to consists of volume preserving Anosov diffeomprhisms $f$ which are sufficiently $C^1$ close to $L$ so that they retain the dominated splitting and such that $b^s(f)>1$, $b^u(f)>1$, and such that $\mu^{b^s(f)-1}>\xi_l$.

Recall that $J^jf$ denotes the Jacobian of $Df|_{E_f^j}$. Define an open dense subset $\cV'$ in the following way.
$$
\cV'=\{f\in\cU':  \log J^sf, \log J^1f, \ldots \log J^lf\,\,\,\, \mbox{are not cohomologous to constants}\}
$$ 
Now we apply Theorem~\ref{thm_tech2} to diffeomorphisms $f_1$ and $f_2$ with $\kappa=\min\{b^s(f_1)-1, b^s(f_2)-1\}$. Because of our choice of $\kappa$ and because $b^u(f_i)>1$ and $\mu>\xi_l$ the assumption of Theorem~\ref{thm_tech2} boils down to
$$
\mu^{b^s(f_i)-1}>\xi_l
$$
which is satisfied according to our choice of the neighborhood $\cU'$.

Now note that $J^sf_i$ are $C^{b(f_i)}$ regular with $b(f_i)\ge 1+\kappa$, and by matching of stable Jacobians we have $(f_1, J^sf_1)\sim (f_2, J^sf_2)$. Hence item~6 of Theorem~\ref{thm_tech2} applies to $J^sf_i$. These Jacobians are not cohomologous to a constant by our choice of $\cV'$ and we can proceed from here in exactly the same way as in the proof of Theorem~\ref{thm_codim1}.

\end{document}